\newtheorem{theorem}{Theorem}[section]
\newtheorem*{theorem*}{Theorem}
\newtheorem{corollary}[theorem]{Corollary}
\newtheorem*{corollary*}{Corollary}
\newtheorem{lemma}[theorem]{Lemma}
\newtheorem{proposition}[theorem]{Proposition}
\newtheorem{definition}[theorem]{Definition}
\newtheorem{remark}[theorem]{Remark}
\newcommand{\N}{\mathbb{N}}
\newcommand{\R}{\mathbb{R}}
\newcommand{\C}{\mathbb{C}}
\newcommand{\ie}{i.e. }
\newcommand{\cstar}{C$^*$}
\newcommand{\continuous}{\mathrm{C}}
\newcommand{\id}{\mathrm{id}}
\newcommand{\mat}{\mathrm{M}}
\newcommand{\sa}{\mathrm{sa}}
\newcommand{\unitaries}{\mathrm{U}}
\newcommand{\hess}{\mathrm{H}}
\newcommand{\bhess}{\mathrm{BH}}
\newcommand{\diag}{\mathrm{diag}}
\renewcommand{\epsilon}{\varepsilon}
\renewcommand{\phi}{\varphi}
\renewcommand{\emptyset}{\varnothing}
\renewcommand{\leq}{\leqslant}
\renewcommand{\geq}{\geqslant}
\newcommand{\nnegint}{\N}
\newcommand{\posint}{\N^*}
\newcommand{\scalprod}[1]{\langle #1 \rangle}
\newcommand{\rk}{\mathrm{rk}}
\begin{document}

\title[Hessenberg decomposition of matrix fields]{Hessenberg decomposition of matrix fields\\ and bounded operator fields}
\author{Beno\^it Jacob}
\address{University of Toronto\\
Dept. of Mathematics\\
40 St. George Street, room 6290\\
Toronto, Ontario M5S 2E4\\
Canada}
\email{bjacob@math.toronto.edu}

\begin{abstract}
Hessenberg decomposition is the basic tool used in computational linear algebra to approximate the eigenvalues of a matrix. In this article, we generalize Hessenberg decomposition to continuous matrix \emph{fields} over topological spaces. This works in great generality: the space is only required to be normal and to have finite covering dimension. As applications, we derive some new structure results on self-adjoint matrix fields, we establish some eigenvalue separation results, and we generalize to all finite-dimensional normal spaces a classical result on trivial summands of vector bundles. Finally, we develop a variant of Hessenberg decomposition for fields of bounded operators on a separable, infinite-dimensional Hilbert space.
\end{abstract}

\maketitle

\tableofcontents

\section*{Introduction}

A \emph{Hessenberg matrix} is a complex square matrix that is zero outside of the upper triangle and first subdiagonal: in other words, it is a matrix of the form
\[
\xymatrix@!@=2pt{
\ast\ar@{-}[rrrrdddd]\ar@{-}[rrrr] & & & & \ast\ar@{-}[dddd] \\
\ast\ar@{-}[rrrddd] & & & & \\
0\ar@{-}[rrdd] & & & & \\
 & & & & \\
0\ar@{-}[rr]\ar@{-}[uu]& & 0 & \ast & \ast
}
\]
Some authors \cite{GvL96} call a Hessenberg matrix \emph{unreduced} if all the coefficients on the first subdiagonal are nonzero. In this article, most of our Hessenberg matrices will happen to have positive (nonzero) coefficients on the first subdiagonal.\\

A \emph{Hessenberg decomposition} of a square matrix $m$ is a decomposition
$$ m = uhu^*$$
where $u$ is unitary and $h$ is Hessenberg. This decomposition is very important to computational linear algebra, because it is the main step toward unitary triangularization, which is decomposing a matrix $m$ as
$$ m = vtv^*$$
where $v$ is unitary and $t$ is triangular. Indeed, for $n\times n$ matrices, the Hessenberg decomposition can be computed exactly in $O(n^3)$ operations, and from there, to any given degree of precision, the unitary triangularization can be computed approximately in just $O(n^2)$ operations, and even just $O(n)$ operations in the self-adjoint case. This is explained in \cite{GvL96}, Chapters 7 and 8. Unitary triangularization is of course very important as it gives the eigenvalues, and amounts in the self-adjoint case to diagonalization.\\

The basic observation of this article is that Hessenberg decomposition can be generalized to work for matrix \emph{fields}, that is, continuously over a topological space, and that that can be used to derive new structure results on matrix fields. Our main Hessenberg decomposition results are summarized in Theorem \ref{hessenbergsummary}:
\begin{theorem*}[See Theorem \ref{hessenbergsummary}]
Let $0\leq d<\infty$. Define $c$ as:
$$c=\left\lbrace\begin{array}{ll}
0 & \text{if $d\leq 1$} \\
2 & \text{if $2\leq d\leq 3$} \\
\left\lceil \frac d2 \right\rceil + 1 & \text{if $d\geq 4$}.
\end{array}\right.$$
Let $X$ be a normal space of covering dimension $d$. Let $n\geq 1$. Let $f\in\continuous(X,\mat_n)$. Let $\epsilon\in\continuous(X,\R_{>0})$. There exist $g\in\continuous(X,\mat_n)$ and $u\in\continuous(X,\unitaries_n)$ such that for all $x\in X$,
\begin{itemize}
\item $g(x)-f(x)$ is self-adjoint,
\item $\Vert f(x)-g(x)\Vert <\epsilon(x)$,
\item the matrix $(ugu^*)(x)$ belongs to $\hess_n^{n-c}$ (see Definition \ref{hess}).
\end{itemize}
\end{theorem*}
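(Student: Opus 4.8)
The plan is to carry out the classical Householder reduction of a matrix to Hessenberg form, but \emph{continuously over $X$}: one kills the below--subdiagonal part of one column at a time, spending the allowed self-adjoint perturbation to keep each step continuous. How many columns one can treat is governed by the connectivity of certain odd spheres, and that is the source of the constant $c$.

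The workhorse is a single-column lemma. Given $w\in\continuous(X,\C^m)$ and $\delta\in\continuous(X,\R_{>0})$, and assuming that either $\dim X\leq 2m-2$, or $m=2$ with $\dim X\leq 3$, or $m=1$ with $\dim X\leq 1$, there exist $\tilde w\in\continuous(X,\C^m)$ that is nowhere zero with $\Vert\tilde w-w\Vert<\delta$, and $v\in\continuous(X,\unitaries_m)$ with $v(x)\tilde w(x)=\Vert\tilde w(x)\Vert\,e_1$ for all $x$. To get $\tilde w$ one uses a controlled form of the standard dimension-theoretic fact that a map from a normal space of covering dimension $<2m$ into $\C^m\cong\R^{2m}$ can be pushed off the origin within a prescribed variable positive tolerance. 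To get $v$, normalize $\tilde w$ to $\hat w\in\continuous(X,S^{2m-1})$; producing $v$ is then the same as lifting $\hat w$ through the locally trivial bundle $\unitaries_m\to S^{2m-1}$, $u\mapsto ue_1$, whose fibre is $\unitaries_{m-1}$. When $\dim X\leq 2m-2$ the sphere $S^{2m-1}$ is $(\dim X)$-connected, so $\hat w$ is null-homotopic, and since the bundle is a Hurewicz fibration over the paracompact base $S^{2m-1}$ a constant lift propagates along the null-homotopy to a lift of $\hat w$. The two exceptional cases bypass the connectivity hypothesis: for $m=1$ the bundle is the homeomorphism $\unitaries_1=S^1\to S^1$, so $v=\overline{\hat w}$ works; for $m=2$ every principal $S^1$-bundle over $S^3$ is trivial (because $H^2(S^3;\Z)=0$), so $\unitaries_2\to S^3$ admits a global section and $\hat w$ lifts no matter what $\dim X$ is, up to the bound $\dim X\leq 3$ already imposed by the construction of $\tilde w$. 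In every case $\Vert\tilde w(\cdot)\Vert$ is a genuine positive continuous function, namely the positive subdiagonal entry we are about to manufacture.

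Call $m\geq 0$ \emph{admissible} if it satisfies one of the three conditions above ($m=0$ being admissible trivially); then, comparing with the three-part formula, $c$ is exactly the least $c_0$ such that every $m\geq c_0$ is admissible, the two small-$m$ improvements being responsible for $c$ dropping below $\lceil d/2\rceil+1$ when $d\in\{0,1,3\}$. Now iterate over $k=1,2,\dots$. At step $k$, let $w$ be the part of column $k$ of the current matrix lying below the diagonal, a vector in $\C^{n-k}$; apply the lemma with $m=n-k$, which is permitted precisely while $n-k$ is admissible, i.e.\ while $n-k\geq c$. Replace the current matrix $M$ by $M+s$, where $s$ is the self-adjoint field supported on column $k$ below the diagonal (equal to $\tilde w-w$ there) and, by Hermitian symmetry, on row $k$ to the right of the diagonal; then $\Vert s\Vert=\Vert\tilde w-w\Vert$, which can be kept $<\epsilon/2^{k}$. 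Next conjugate by $I_k\oplus v$: this unitary fixes the coordinates $1,\dots,k$, so it does not disturb the columns $1,\dots,k-1$ already in Hessenberg form, while it turns column $k$ into $(\ast,\dots,\ast,\Vert\tilde w(\cdot)\Vert,0,\dots,0)^{\!\top}$, with positive subdiagonal entry. Running $k$ from $1$ to $n-c$ yields $F\in\continuous(X,\mat_n)$ with $F(x)\in\hess_n^{n-c}$ and a unitary $u$ (the product of the $I_k\oplus v$'s) with $F=ufu^*+T$, where $T$, a sum of unitary conjugates of the self-adjoint $s$'s, is self-adjoint of pointwise norm $<\epsilon$. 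Then $g:=u^*Fu=f+u^*Tu$ satisfies all three requirements. When $n\leq c$ there is nothing to prove.

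The main obstacle is the single-column lemma over a general normal, finite-dimensional space: one needs the \emph{controlled} dimension-theoretic statement that a $\C^m$-valued map can be perturbed off the origin within a variable positive tolerance when $\dim X<2m$, together with a form of obstruction theory — valid for such $X$, not just for CW complexes — showing both that a map of $X$ into a sufficiently connected sphere is null-homotopic and that the resulting lift can be chosen continuously, so that it can be fed into the next step; pinning down the sharp admissible range, in particular the two small-$m$ improvements that enter directly into the value of $c$, is the delicate part. The rest — that conjugation by $I_k\oplus v$ spares the earlier columns, that the perturbations stay Hermitian, that the tolerance $\epsilon$ splits over the $\leq n$ steps — is routine.
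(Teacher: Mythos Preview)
Your proof is correct and shares its skeleton with the paper's: the same column-by-column reduction, the same three-regime split on $m$ producing exactly the constant $c$, and for $m=1,2$ literally the same constructions (your global section of $\unitaries_2\to S^3$ \emph{is} the Givens rotation written down in Proposition~\ref{hessenbergdim3}, and your $v=\overline{\hat w}$ is the diagonal phase of Proposition~\ref{hessenbergdim1}). The one substantive difference is the general step $m\geq 3$. You produce $v$ by arguing that $\hat w:X\to S^{2m-1}$ is null-homotopic (since $\dim X\leq 2m-2$) and then lifting through the Hurewicz fibration $\unitaries_m\to S^{2m-1}$. The paper instead stays entirely within the elementary dimension theory of Section~\ref{dim}: after perturbing $b$ off $0$ it perturbs the normalized vector $\beta\in S^{2m-1}$ a \emph{second} time, via Lemma~\ref{avoidinmanifold}, to avoid the single antipodal point $-e_1$, and then writes down the Householder reflection about $\beta'+e_1$ explicitly, which is well-defined and continuous precisely on the complement of $-e_1$. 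The paper's route is more self-contained and constructive (no homotopy theory, no fibrations); yours explains more transparently why the threshold is $\dim X\leq 2m-2$ and why $m=2$ is special, but---as you yourself flag---justifying the null-homotopy over a merely normal, non-CW $X$ still comes down to a controlled point-avoidance statement of exactly the sort the paper proves, so the gain is conceptual rather than technical.
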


We then derive some applications of that theorem.\\

First, Theorem \ref{struc} is a general decomposition result for self-adjoint matrix fields: it says that if we allow an arbitrarily small perturbation, conjugation by a unitary field \emph{and} a rank one perturbation, then any self-adjoint matrix field decomposes as
$$\lambda_1p_1+\cdots+\lambda_kp_k+r$$
where the $p_i$ are mutually orthogonal rank one projections, the $\lambda_i$ are real-valued functions, and $r$ is orthogonal to the $p_i$ and is zero outside of a block of size roughly half the covering dimension of the base space.\\

Corollary \ref{strucdim3} gives a different kind of structure result in the special case of base spaces of covering dimension 3.\\

Then, we establish some eigenvalue separation results: a generic result as Theorem \ref{separationdefault}, followed by specialized theorems in low dimensions: Theorem \ref{separationdim2} shows that over a base space of dimension at most 2, the eigenvalues can be completely separated, generalizing a result of Choi and Elliott \cite[Theorem 1]{CE90}; and Theorem \ref{separationdim4} gives an optimal eigenvalue separation result when the base space has dimension 3 or 4 --- indeed, as we will recall in section \ref{section_separation}, complete separation of eigenvalues is not possible in general as soon as the dimension of the base space is more than 2.\\

We then establish a specialized variant of Hessenberg decomposition for the case of projection fields: that is Theorem \ref{bhessenbergproj}. As an application (Theorem \ref{trivialsummand}), we generalize a classical theorem on vector bundles according to which over a space of dimension $d$, any complex vector bundle of rank $n$ has a trivial summand of rank roughly $n-d/2$. This has long been known for CW-complexes, and the case of compact Hausdorff spaces can be reduced to that case, as noted by Phillips \cite[Proposition 4.2]{P07}. Our Theorem \ref{trivialsummand} shows that this actually works for all normal spaces.\\

In the last section of this article, we replace matrices by bounded operators on an infinite-dimensional, separable Hilbert space. Here, a classical analogue of the notion of being diagonalizable with eigenvalues of multiplicity 1, is the notion of a cyclic operator. An operator is said to be cyclic if there exists a cyclic vector for it. It is well-known (see \cite{H82}, Chapter 18) that cyclic operators do not form a norm-dense subset of bounded operators, and that their complement is norm-dense. Therefore, contrary to the case of finite matrices, here, already in the case where $X$ is a point, it is nontrivial to say anything. Fortunately, there still is a notion of Hessenberg operators (see Definition \ref{hessenbergoperatordef}), that have many interesting properties. They are cyclic, and it is well-known (Lemma \ref{cyclichessenberg}) that cyclic operators are exactly the operators of the form
$$u^*hu$$
with $u$ unitary and $h$ Hessenberg. Moreover, the self-adjoint Hessenberg operators are exactly the Jacobi operators, whose spectral theory and inverse spectral theory are so well-developed (see \cite{T00}). We obtain the following result:
\begin{theorem*}[See Theorem \ref{hessenbergoperators}]
Let $X$ be either a compact space, or a finite-dimensional normal space. Let $H=\ell^2(\posint)$. Let $f$ be a strongly continuous map from $X$ to $B(H)$. For any $\epsilon\in\continuous(X,\R_{>0})$, there exist maps $g$ and $v$ from $X$ to $B(H)$ such that, letting $h=v^*gv$, the following properties are satisfied:
\begin{itemize}
\item $v(x)$ is an isometry for all $x\in X$.
\item $h(x)$ is Hessenberg for all $x\in X$.
\item $v$, $g$, $h$ are strongly continuous.
\item $\Vert f(x)-g(x)\Vert<\epsilon(x)$ for all $x\in X$. Moreover, $f-g$ is norm-continuous, compact, and self-adjoint.
\end{itemize}
\end{theorem*}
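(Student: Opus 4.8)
The plan is to reduce the infinite-dimensional statement to the finite-dimensional Hessenberg decomposition theorem (Theorem \ref{hessenbergsummary}) by an exhaustion-and-splicing argument along the standard increasing sequence of finite-rank corner projections. Fix the orthonormal basis $(e_k)_{k\geq 1}$ of $H=\ell^2(\posint)$ and let $P_n$ be the orthogonal projection onto $\mathrm{span}(e_1,\dots,e_n)$, with $Q_n=1-P_n$. Strong continuity of $f$ means that $x\mapsto f(x)\xi$ is norm-continuous for each $\xi\in H$; in particular $x\mapsto P_nf(x)P_n$ is norm-continuous into the finite-dimensional algebra $B(P_nH)\cong\mat_n$, and on bounded subsets of $x$ the operators $f(x)$ are norm-bounded (this needs a uniform boundedness argument, using compactness of $X$ or, in the finite-dimensional normal case, a local boundedness hypothesis that is implicit in asking $\|f-g\|<\epsilon$ to make sense --- I would state this carefully as the first lemma). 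The idea is then to build $g$ and $v$ block by block: on the first block of size $n_1$ apply the finite-dimensional theorem to $P_{n_1}f(\cdot)P_{n_1}$ to get a self-adjoint perturbation and a unitary conjugating it into Hessenberg form; then enlarge to a block of size $n_2>n_1$, leaving the already-fixed top-left $n_1\times n_1$ Hessenberg corner untouched, and only acting on the new rows and columns; iterate with $n_1<n_2<\cdots\to\infty$.

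The key technical point is that Hessenberg form is \emph{stable under this kind of nested extension}: if the top-left $k\times k$ corner of a matrix is already (unreduced) Hessenberg, one can conjugate by a unitary supported on the last $n-k$ coordinates to bring the whole $n\times n$ matrix to Hessenberg form, because the only obstruction to Hessenberg-ness in the bottom-right block is independent of the top-left block, and the entries linking the two blocks live in the single column/row adjacent to the corner, which Hessenberg form already permits to be nonzero. Concretely, at stage $j$ I would apply Theorem \ref{hessenbergsummary} (with $d=0$ when $X$ is compact Hausdorff --- wait, not quite; I would instead invoke the theorem with $d=\dim X$, or in the compact case the version that allows an arbitrary finite-dimensional value, and absorb the resulting ``$c$'' slack by choosing the block sizes $n_{j+1}-n_j$ large compared to $d$) to the compression $P_{n_{j+1}}g_j(\cdot)P_{n_{j+1}}$, where $g_j$ is the partially-perturbed field from stage $j$, but I would only perturb and conjugate in the coordinates $n_j+1,\dots,n_{j+1}$, keeping the change self-adjoint there and of operator norm $<\epsilon(x)/2^{j+1}$ on that block. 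Summing the perturbations gives a total perturbation $f-g$ that is norm-convergent (hence norm-continuous), self-adjoint, and compact (it is a norm-limit of finite-rank self-adjoint operators); and the product $v=\cdots v_2v_1$ of the conjugating unitaries, each differing from the identity only on a finite block, converges in the strong operator topology to an isometry (it need not be unitary --- this is exactly why the statement asks only for an isometry, and I would point out that the ``tail'' of infinitely many nontrivial blocks is where unitarity can be lost).

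The steps, in order: (1) set up the corner projections and prove the uniform local norm bound on $f$, so that all compressions are uniformly bounded; (2) prove the nested-extension lemma for Hessenberg form, i.e. that a block-unitary supported on the new coordinates suffices to Hessenberg-ize, given the previous corner is unreduced Hessenberg; (3) run the induction, at each stage invoking Theorem \ref{hessenbergsummary} on the new block with error budget $\epsilon(x)/2^{j+1}$, recording that the perturbation is self-adjoint and finite-rank and the conjugator is a finite-block unitary; (4) take limits: show $g=f-\sum(\text{perturbations})$ is strongly continuous with $f-g$ norm-continuous, compact, self-adjoint, and $\|f(x)-g(x)\|<\epsilon(x)$; show $v=\prod v_j$ converges strongly to an isometry with $v,g,h=v^*gv$ strongly continuous; and verify $h(x)$ is Hessenberg for each $x$ by checking each of its corners is, using step (2).

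The main obstacle I anticipate is twofold and essentially analytic rather than algebraic. First, \emph{uniform control of the block sizes}: the constant $c=c(d)$ in Theorem \ref{hessenbergsummary} means the Hessenberg form obtained on an $m\times m$ block is only Hessenberg ``down to'' $\mat_m^{m-c}$, i.e. a $c\times c$ bottom corner may be uncontrolled; I must choose $n_{j+1}-n_j$ growing (or at least $\geq$ some fixed function of $d$) and, more delicately, arrange the induction so that the uncontrolled corner at stage $j$ is \emph{entirely contained} in the block that stage $j+1$ is allowed to touch, so it gets cleaned up later --- keeping track of which entries are ``finalized'' versus ``still in play'' is the bookkeeping heart of the proof. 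Second, \emph{strong continuity of the infinite products and limits}: $v_j$ converges to an isometry $v$ strongly, but $v^*gv$ being strongly continuous requires an $\epsilon/3$ argument combining strong continuity of $g$, the uniform bound, and the fact that on any finite-dimensional subspace the tail product $v_{>N}$ acts as the identity --- routine but needs the uniform norm bound from step (1) to push through. Everything else (self-adjointness and compactness of $f-g$, the norm estimate) then follows formally from the telescoping construction.
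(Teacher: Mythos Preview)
Your plan diverges from the paper's proof in a way that creates a real gap. The paper does \emph{not} reduce to the finite-dimensional Hessenberg theorem (Theorem~\ref{hessenbergsummary}) at all. Instead it works directly in $B(H)$, processing one column at a time: at stage $k$ it perturbs the sub-block of column $k$ below the diagonal so that it is nonzero, then applies a single Householder reflection supported on coordinates $\geq k+1$ to zero out that column's tail. The perturbation step is Lemma~\ref{avoidinfdim}, which says that a continuous map $X\to H$ can be pushed off $0$ by an arbitrarily small perturbation whenever $X$ is compact \emph{or} $X$ is normal of finite dimension. The point is that the column block lives in the \emph{infinite-dimensional} space $H$, so no comparison between $\dim X$ and a codomain dimension is ever needed; there is no residual $c\times c$ corner, and the bookkeeping you worry about in your ``main obstacle'' paragraph simply does not arise.

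Your reduction to Theorem~\ref{hessenbergsummary} breaks in the compact case. A compact space need not have finite covering dimension (take the Hilbert cube), and Theorem~\ref{hessenbergsummary} genuinely requires $\dim X<\infty$: its proof invokes Lemma~\ref{avoidzero} on maps $X\to\R^m$, which fails for infinite-dimensional $X$. Your parenthetical ``wait, not quite; \ldots or in the compact case the version that allows an arbitrary finite-dimensional value'' is exactly where the argument collapses---no such version exists, and choosing block sizes large cannot help since $c$ would have to be infinite. The paper sidesteps this entirely because Lemma~\ref{avoidinfdim} handles the compact case by a different mechanism: compactness of $X$ forces the image of $f$ in $H$ to be uniformly small in high coordinates, so one can perturb in a single far-out coordinate direction. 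Even when $\dim X<\infty$, your route is a detour: the ``nested-extension lemma'' you would need (Hessenberg-ize the new columns by a unitary supported only on the new coordinates) is not a consequence of Theorem~\ref{hessenbergsummary} as a black box but rather is the column-by-column Householder step itself, which is exactly what the paper's Lemma~\ref{finitehessenberg} does directly.
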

Notice that in this theorem, $v(x)$ is only an isometry, not a unitary in general. The statement would be wrong if we said ``unitary'' instead of ``isometry'', again because cyclic operators do not form a norm-dense subset of bounded operators.\\

The author thanks George Elliott and Leonel Robert for helpful conversations, and \'Etienne Blanchard and N. Christopher Phillips for helpful comments on an early version of the present article.\\

\section*{Notations and terminology}

$\nnegint$ is the set of non-negative integers, $\posint$ is the set of positive integers, $\R_{>0}$ is the set of positive real numbers. For $x\in\R$ we let $\lceil x\rceil$ denote the smallest integer $n$ such that $n\geq x$.

$\continuous(X,Y)$ is the set of all continuous maps from a space $X$ to a space $Y$. The notion of topological dimension used throughout this article is the Lebesgue covering dimension, whose definition is recalled in Section \ref{dim}. By the dimension of a space $X$, we mean its covering dimension, which we denote by $\dim X$ (see section \ref{dim}).

$\mat_n$ is the set of complex $n\times n$ matrices. The notation $m_{ij}$ means the $(i,j)$-th coefficient of the matrix $m$, that is the coefficient at row $i$ and column $j$, with the numbering starting at $1$.

$\mat_n^\sa$ is the subset of self-adjoint matrices, $\unitaries_n$ is the subset of unitary matrices. We will introduce a notation $\hess_n^k$ in Definition \ref{hess} and a notation $\bhess_n^k$ in Definition \ref{bhess}.

A matrix is called \emph{positive} if it is of the form $aa^*$ for some matrix $a$. This is what is sometimes called ``positive indefinite''. Thus we consider $0$ a positive matrix, even though we do not consider it a positive number, an inconsistent by usual terminology. A matrix $m$ is negative if $-m$ is positive.

Given two matrices $a\in\mat_n$ and $b\in\mat_p$, we let $a\oplus b$ denote the block-diagonal matrix $\left(\begin{smallmatrix}a & 0 \\ 0 & b\end{smallmatrix}\right)\in\mat_{n+p}$. We let $\mat_n\oplus\mat_p$ denote the subalgebra of $\mat_{n+p}$ consisting of all matrices of that form. By convention, we let $\mat_0=0$ and $\mat_n\oplus\mat_0=\mat_n$.

On a Hilbert space $H$, we only use the norm topology. The scalar product is denoted $\scalprod{\cdot,\cdot}$ and is linear in the first variable. We let $B(H)$ denote the set of bounded operators on $H$. On $B(H)$, by the \emph{strong} and \emph{weak} topologies we mean the operator topologies, \ie the SOT and WOT. The adverbs \emph{strongly} and \emph{weakly} refer to these topologies. For example, to say that a map $f:X\rightarrow B(H)$ is strongly continuous means that for all $\xi\in H$, the map $x\mapsto f(x)\xi$ is continuous. When we apply operator terminology to maps $f:X\rightarrow B(H)$, we mean it pointwise. For example, to say that $f$ is compact means that $f(x)$ is a compact operator for all $x\in X$.

In section \ref{operators} we will fix ourselves $H=\ell^2(\posint)$, we will let $(e_i)_{i\in\posint}$ denote its standard Hilbert basis, and we will let $a_{ij}=\scalprod{ae_i,e_j}$ denote the $ij$-th matrix coefficient of an operator $a\in B(H)$.

\section{Lemmas in topological dimension theory}\label{dim}

Let us first recall the classical notion of Lebesgue covering dimension, often called topological dimension. Given a topological space $X$, a \emph{refinement} of an open covering $(U_i)_{i\in I}$ of $X$ is an open covering $(V_j)_{j\in J}$ of $X$ such that for all $j\in J$, there exists $i\in I$ such that $V_j\subset U_i$.
\begin{definition}[See \cite{N64}]
Let $d\in\nnegint$. A topological space $X$ is said to have \emph{dimension at most $d$} if any open covering of it has a refinement $(V_j)_{j\in J}$ such that for all $x\in X$, the set $\{j\in J,\;x\in V_j\}$ has at most $d+1$ elements.
\end{definition}
Obviously, $\dim X$ is then defined as the smallest $d$ such that $X$ has dimension at most $d$, or $\infty$ if no such $d$ exists. It is true that $\dim\R^d=d$.\\

Let us also recall the notion of a \emph{normal space}, which is the only ``separation axiom'' that we will use throughout this article:
\begin{definition}
A topological space is said to be \emph{normal} if any two disjoint closed subsets have disjoint neighborhoods. In other words: for all closed subsets $F,G$ of $X$, if $F\cap G=\emptyset$ then there exist open subsets $U,V$ of $X$ such that $F\subset U$, $G\subset V$, $U\cap V=\emptyset$.
\end{definition}

Recall the following classical theorem in dimension theory:
\begin{theorem}[See \cite{N64}, Theorem VII.9]
\label{characdimavoid}
Let $X$ be a normal space. Let $n\in\nnegint$. The following are equivalent:
\begin{enumerate}
\item \label{cond_dimXleqd} $\dim X \leq n$.
\item \label{cond_avoidpointincube} For any $f\in\continuous(X,[0;1]^{n+1})$, for any $\epsilon>0$, for any $y\in[0;1]^{n+1}$, there exists $g\in\continuous(X,[0;1]^{n+1})$ such that $\Vert f-g\Vert <\epsilon$ and $y\not\in g(X)$.
\end{enumerate}
\end{theorem}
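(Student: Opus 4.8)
The plan is to prove the two implications separately; in both directions the bridge between the two conditions is Urysohn's lemma, which identifies a continuous map $X\to[0;1]^{n+1}$ with an $(n+1)$-tuple of continuous real functions, and this is also the only place where normality enters the direct arguments. The theorem is classical (this is Nagami's book, Theorem VII.9), so I would in fact be content to cite it; but here is how a proof goes.

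\emph{The implication $(\ref{cond_avoidpointincube})\Rightarrow(\ref{cond_dimXleqd})$.} I would invoke the classical separator (partition) characterization of covering dimension: for a normal space $X$, one has $\dim X\leq n$ as soon as every family $(A_i,B_i)_{i=1}^{n+1}$ of $n+1$ pairs of disjoint closed subsets admits, for each $i$, a closed set $C_i$ separating $A_i$ from $B_i$ (so that $X\setminus C_i$ is the disjoint union of two open sets containing $A_i$ and $B_i$ respectively) with $\bigcap_{i=1}^{n+1}C_i=\emptyset$. Given such pairs, choose by Urysohn's lemma functions $f_i\in\continuous(X,[0;1])$ with $f_i\equiv 0$ on $A_i$ and $f_i\equiv 1$ on $B_i$, put $f=(f_1,\dots,f_{n+1})$ and $y=(\tfrac12,\dots,\tfrac12)$, and apply $(\ref{cond_avoidpointincube})$ with any $\epsilon<\tfrac12$ to obtain $g=(g_1,\dots,g_{n+1})$ with $\Vert f-g\Vert<\epsilon$ and $y\notin g(X)$. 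Then $C_i:=g_i^{-1}(\{\tfrac12\})$ is closed, the open sets $g_i^{-1}\bigl([0;\tfrac12)\bigr)\supset A_i$ and $g_i^{-1}\bigl((\tfrac12;1]\bigr)\supset B_i$ exhibit it as a separator (the bound $\epsilon<\tfrac12$ is what keeps $A_i$ and $B_i$ on the correct sides), and $\bigcap_i C_i=g^{-1}(y)=\emptyset$. Hence $\dim X\leq n$.

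\emph{The implication $(\ref{cond_dimXleqd})\Rightarrow(\ref{cond_avoidpointincube})$.} Write $Q=[0;1]^{n+1}$ and let $f$, $\epsilon$, $y$ be given. Cover $Q$ by finitely many open sets $O_1,\dots,O_m$ of diameter $<\epsilon$. Then $(f^{-1}(O_k))_{k=1}^m$ is a finite open cover of $X$, so (using $\dim X\leq n$ together with normality) it has a finite open refinement $(W_1,\dots,W_p)$ of multiplicity at most $n+1$, and by the shrinking lemma this finite cover of the normal space $X$ carries a subordinate partition of unity $\phi_1,\dots,\phi_p$. For each $j$ fix $k(j)$ with $W_j\subset f^{-1}(O_{k(j)})$ and a point $q_j\in O_{k(j)}$, and set $g=\sum_{j=1}^p\phi_j\,q_j\in\continuous(X,Q)$. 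Whenever $\phi_j(x)>0$ we have both $f(x)$ and $q_j$ in $O_{k(j)}$, which has diameter $<\epsilon$, so $\Vert f(x)-g(x)\Vert<\epsilon$. Finally, for every $x$ the point $g(x)$ lies in the convex hull of the at most $n+1$ points $q_j$ with $\phi_j(x)>0$; since there are only finitely many subsets $S\subset\{1,\dots,p\}$ with $|S|\leq n+1$, and for each of them the set of configurations $(q_j)\in\prod_j O_{k(j)}$ for which the convex hull of $\{q_j:j\in S\}$ contains $y$ is closed and nowhere dense, a sufficiently generic choice of the $q_j$ inside the open sets $O_{k(j)}$ makes $y$ miss every such $(\leq n)$-dimensional simplex; with this choice $y\notin g(X)$.

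The step I expect to be the main obstacle is not any of the arguments above but the two dimension-theoretic facts they rest on: the separator characterization used in the first implication, and, in the second implication, the passage from the bare definition of $\dim X\leq n$ (which a priori only furnishes a refinement of unrestricted cardinality, not locally finite) to a \emph{finite} open refinement of multiplicity $\leq n+1$ carrying a partition of unity --- it is exactly here that normality is essential, through the shrinking lemma for finite (and point-finite) open covers. Both inputs are standard and I would cite them from \cite{N64}; once they are available, the reduction to the compact cube $Q$ and the general-position argument finish the proof.
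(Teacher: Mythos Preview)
The paper does not actually prove this theorem: it states it as a classical fact and cites Nagata \cite{N64}, Theorem VII.9, with no proof given. So there is, strictly speaking, nothing in the paper to compare your argument against, and your own remark that you ``would in fact be content to cite it'' already matches exactly what the paper does.

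That said, the paper does indirectly reveal its preferred route for the implication $(\ref{cond_dimXleqd})\Rightarrow(\ref{cond_avoidpointincube})$: the subsequent Lemma~\ref{avoidzero} is described as following ``very closely the classical proof'' of this implication, and it proceeds not via nerve maps and general position as you do, but directly from the separation-type Lemma~\ref{bigdimthlemma} (the existence of $V_i\subset\overline V_i\subset W_i$ with $\bigcap_i(\overline W_i- V_i)=\emptyset$) together with Urysohn functions. Your simplicial-approximation approach is a genuine alternative: it makes the $n$-dimensionality of the approximating image geometrically visible, at the cost of the general-position step (which is correct but would benefit from one more sentence of justification---choose the $q_j$ inductively, at each stage avoiding the finitely many proper affine subspaces determined by $y$ and the previously chosen points). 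The paper's route, following Nagata, is more hands-on and avoids any genericity argument. For the converse direction your separator argument is standard and correct.

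One small slip: the cited book is Nagata's, not ``Nagami's''.
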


We will need some variants and refinements of the implication \ref{cond_dimXleqd}$\Rightarrow$\ref{cond_avoidpointincube} in the above theorem. In order to obtain them, we will simply adapt the classical proof of that theorem. The main technical lemmas used in that proof are the following:

\begin{lemma}[Urysohn's Lemma]\label{urysohnlemma}
Let $X$ be a normal space. Let $F,G$ be disjoint closed subsets of $X$. There exists a continuous function $\phi\in\continuous(X,[0;1])$ such that $\phi(F)=0$ and $\phi(G)=1$.
\end{lemma}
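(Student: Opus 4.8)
This is the classical Urysohn Lemma, and the plan is to reproduce its standard proof by dyadic interpolation. The first step is to record the usual reformulation of normality: if $F$ is a closed subset of $X$ contained in an open subset $W$, then there is an open subset $V$ with $F\subseteq V\subseteq\overline{V}\subseteq W$. This follows immediately by applying the definition of normality to the disjoint closed sets $F$ and $X\setminus W$: it produces disjoint open sets $V\supseteq F$ and $V'\supseteq X\setminus W$, and then $\overline{V}\subseteq X\setminus V'\subseteq W$.

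Let $D$ denote the set of dyadic rationals in $[0;1]$. The core of the argument is to construct, by induction on the number of binary digits, a family $(U_q)_{q\in D}$ of open subsets of $X$ such that $F\subseteq U_0$, $U_1=X\setminus G$, and $\overline{U_p}\subseteq U_q$ whenever $p<q$. At the base stage one sets $U_1=X\setminus G$ (open since $G$ is closed), notes that $F\subseteq U_1$ because $F\cap G=\emptyset$, and uses the reformulated normality to insert $U_0$ between $F$ and $U_1$. At the stage passing from level $n$ to level $n+1$, each new dyadic $r=(2j+1)/2^{n+1}$ has consecutive neighbors $p=j/2^n$ and $q=(j+1)/2^n$ already present with $\overline{U_p}\subseteq U_q$, and one applies the reformulated normality to the closed set $\overline{U_p}$ inside the open set $U_q$ to produce $U_r$ with $\overline{U_p}\subseteq U_r\subseteq\overline{U_r}\subseteq U_q$; the full nesting property for all pairs of level $\leq n+1$ then follows by chaining consecutive inclusions.

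Next I would define $\phi\in\continuous(X,[0;1])$ — before proving continuity — by the formula $\phi(x)=\inf\{q\in D:x\in U_q\}$, with the convention that the infimum of the empty set is $1$. The nesting property gives two monotonicity facts: if $x\in\overline{U_p}$ then $\phi(x)\leq p$, since $\overline{U_p}\subseteq U_q$ for every dyadic $q>p$; and if $x\notin U_p$ then $\phi(x)\geq p$, since $\overline{U_q}\subseteq U_p$ for every dyadic $q<p$, so $x$ lies in no such $U_q$. Continuity at a point $x_0$ with $\phi(x_0)=t$ then follows by choosing, for a given $\epsilon>0$, dyadics $p<t<q$ with $q-p<\epsilon$ (using only one of the two sides when $t\in\{0,1\}$): the set $U_q\setminus\overline{U_p}$ is an open neighborhood of $x_0$ — it contains $x_0$ by the two monotonicity facts applied to $t$ — and on it one has $p\leq\phi\leq q$, hence $|\phi-t|<\epsilon$. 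Finally $\phi(F)=0$ because $F\subseteq U_0$ forces $\phi\leq 0$ there, and $\phi(G)=1$ because $G\cap U_1=\emptyset$ forces $\phi\geq 1$ on $G$ by the second monotonicity fact.

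The only genuine work is the inductive construction of $(U_q)_{q\in D}$ and the verification that $U_q\setminus\overline{U_p}$ serves as the required neighborhood in the continuity step, and both reduce to the two monotonicity inequalities above; so I do not anticipate any real obstacle. This is a textbook argument, included here only because Urysohn's Lemma is one of the main technical ingredients of the dimension-theoretic results developed in the following sections.
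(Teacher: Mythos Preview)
Your argument is the standard dyadic-interpolation proof of Urysohn's Lemma and is correct. Note, however, that the paper does not actually prove this lemma: it is merely stated as a classical result and then used as a black box in the proof of Lemma~\ref{avoidzero}. So there is no ``paper's own proof'' to compare against; what you have written is precisely the textbook justification the paper takes for granted.
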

Here, by $\phi(F)=\lambda$ we mean that $\phi(x)=\lambda$ for all $x\in F$.

\begin{lemma}[See \cite{N64}, VII.4.B]\label{bigdimthlemma}
Let $n\in\nnegint$. Let $X$ be a normal space such that $\dim X\leq n$. Let $U_1,\ldots,U_{n+1}$ be open subsets of $X$. Let $F_1,\ldots,F_{n+1}$ be closed subsets of $X$. Suppose that $F_i\subset U_i$ for all $i$. It follows that there exist open subsets $V_1,\ldots,V_{n+1}$ and $W_1,\ldots,W_{n+1}$ of $X$ such that
$$F_i\subset V_i\subset \overline V_i\subset W_i\subset U_i \;\;\;\text{for}\;i=1,\ldots,n+1$$
and
$$\bigcap_{i=1}^{n+1}\left(\overline W_i - V_i\right)\;=\;\emptyset.$$
\end{lemma}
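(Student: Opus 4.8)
The plan is to read the statement off from Theorem~\ref{characdimavoid}, Urysohn's Lemma, the shrinking lemma for finite open covers of a normal space, and a couple of routine uses of normality. In spirit this is the ``partition'' form of the covering--dimension condition, with the thin partitions upgraded to the thick collars $\overline W_i\setminus V_i$ that appear here. (Citing Theorem~\ref{characdimavoid} is a little circular inside Nagami's development, since that theorem is proved with the help of a lemma of this kind; but \ref{characdimavoid} is quoted here as an established fact, so we may use it. One could also argue directly from the open--covering definition of $\dim$.)

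\emph{Step 1 --- partitions with empty common intersection.} For each $i$ I would apply Urysohn's Lemma~\ref{urysohnlemma} to the disjoint closed sets $X\setminus U_i$ and $F_i$, obtaining $\phi_i\in\continuous(X,[0;1])$ with $\phi_i(X\setminus U_i)=0$, $\phi_i(F_i)=1$, and set $\psi_i:=2\phi_i-1\in\continuous(X,[-1;1])$, so $\psi_i=-1$ on $X\setminus U_i$ and $\psi_i=1$ on $F_i$. Assemble $\psi:=(\psi_1,\dots,\psi_{n+1})\colon X\to[-1;1]^{n+1}$; applying Theorem~\ref{characdimavoid} with target point the centre of the cube and transporting through the affine homeomorphism $[0;1]^{n+1}\cong[-1;1]^{n+1}$ that carries the centre to the origin, I get $g=(g_1,\dots,g_{n+1})\colon X\to[-1;1]^{n+1}$ with $\Vert\psi-g\Vert<1$ and $(0,\dots,0)\notin g(X)$. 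Put $G_i:=g_i^{-1}\big((0;1]\big)$, $H_i:=g_i^{-1}\big([-1;0)\big)$ and $L_i:=g_i^{-1}(0)=X\setminus(G_i\cup H_i)$. Then $G_i,H_i$ are open and disjoint; since $\psi_i=1$ on $F_i$ and $\Vert\psi-g\Vert<1$ we get $g_i>0$ on $F_i$, so $F_i\subset G_i$; likewise $X\setminus U_i\subset H_i$; and $\bigcap_{i=1}^{n+1}L_i=g^{-1}\big((0,\dots,0)\big)=\emptyset$. Finally $\overline G_i\subset X\setminus H_i=G_i\cup L_i\subset U_i$.

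\emph{Step 2 --- fatten, then choose $V_i$ and $W_i$.} The $L_i$ are closed with empty intersection, so $\{X\setminus L_i\}_{i=1}^{n+1}$ is a finite open cover of $X$; by the shrinking lemma (a finite open cover $\{A_i\}$ of a normal space admits an open cover $\{B_i\}$ with the same index set and $\overline B_i\subset A_i$ for all $i$) there are open $O_i$ with $\overline O_i\subset X\setminus L_i$ and $\bigcup_iO_i=X$. Set $N_i:=X\setminus\overline O_i$; these are open, $L_i\subset N_i$, and $\bigcap_iN_i\subset X\setminus\bigcup_iO_i=\emptyset$. Now take $V_i:=G_i$, so $F_i\subset V_i$ and, by Step~1, $\overline V_i\subset(G_i\cup L_i)\cap U_i\subset(V_i\cup N_i)\cap U_i=:\Omega_i$, which is open. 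By normality I pick an open $W_i$ with $\overline V_i\subset W_i\subset\overline W_i\subset\Omega_i$. Then $F_i\subset V_i\subset\overline V_i\subset W_i\subset U_i$, while $\overline W_i\setminus V_i\subset\Omega_i\setminus V_i\subset N_i$; hence $\bigcap_{i=1}^{n+1}(\overline W_i\setminus V_i)\subset\bigcap_iN_i=\emptyset$, the required conclusion.

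The step I expect to be the delicate one is the fattening in Step~2 --- more precisely, understanding why it is needed. It is tempting to omit it and simply take $W_i$ to be a very thin open neighbourhood of $\overline V_i$, hoping the collars $\overline W_i\setminus V_i$ would inherit the empty intersection of the partitions $L_i$; but $\overline W_i\setminus V_i$ always contains the nonempty open ring $W_i\setminus\overline V_i$, so it is irreducibly ``thick'', and one genuinely must first pass from the closed partitions $L_i$ to open neighbourhoods $N_i$ with $\bigcap_iN_i=\emptyset$ and then keep $\overline W_i$ inside $V_i\cup N_i$. The rest --- Urysohn, the shrinking lemma, and the bookkeeping of the affine rescaling (whose error $\epsilon$ can be taken as small as we please, hence is irrelevant) --- is routine.
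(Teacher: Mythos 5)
The paper does not actually prove this lemma; it is quoted verbatim from Nagata \cite{N64} (VII.4.B) as one of the "classical lemmas" recalled before the paper's own arguments begin, so there is no author proof to compare against. Your proof is correct. Step~1 cleanly manufactures the closed partitions $L_i=g_i^{-1}(0)$ separating $F_i$ from $X\setminus U_i$ with $\bigcap_i L_i=\emptyset$, and Step~2 correctly observes that one must first replace the $L_i$ by open $N_i\supset L_i$ with $\bigcap_i N_i=\emptyset$ (via the shrinking lemma applied to the cover $\{X\setminus L_i\}$) before carving out $W_i$, since the collar $\overline W_i\setminus V_i$ has nonempty interior and so cannot be squeezed into the $L_i$ directly; the inclusions $\overline V_i\subset\Omega_i:=(V_i\cup N_i)\cap U_i$ and $\overline W_i\setminus V_i\subset N_i$ then close the argument. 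The one structural remark worth making is the one you already flag: in Nagata's own development VII.4.B is proved directly from the open-cover definition of covering dimension (by refining and carefully shrinking a cover built from the $U_i$ and $F_i$) and is then used as an ingredient in the proof of the avoidance theorem \ref{characdimavoid}, so running the implication the other way is ``backwards'' relative to the source. Since the paper quotes Theorem \ref{characdimavoid} as an independently established fact, your route is logically sound here, and it has the virtue of being shorter and more transparent than the classical combinatorial shrinking argument; the trade-off is that one cannot present it as a self-contained proof inside a from-scratch treatment of dimension theory.
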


Having recalled these classical lemmas, we can now start proving the lemmas that we will need. The proof of the following lemma follows very closely the classical proof of Theorem \ref{characdimavoid}.

\begin{lemma}\label{avoidzero}
Let $n\in\nnegint$. Let $X$ be a normal space such that $\dim X\leq n$. For any $f\in\continuous(X,\R^{n+1})$ and for any $\epsilon\in\continuous(X,\R_{>0})$, there exists $g\in\continuous(X,\R^{n+1})$ such that for all $x\in X$, $\Vert g(x)-f(x)\Vert<\epsilon(x)$ and $g(x)\neq 0$.
\end{lemma}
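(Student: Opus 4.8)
The plan is to imitate the classical proof of the implication \ref{cond_dimXleqd}$\Rightarrow$\ref{cond_avoidpointincube} of Theorem \ref{characdimavoid}, with Lemma \ref{bigdimthlemma} doing the combinatorial work and Urysohn's Lemma \ref{urysohnlemma} doing the gluing. Write $f=(f_1,\ldots,f_{n+1})$ with $f_i\in\continuous(X,\R)$, and set $c=\tfrac12\epsilon/\sqrt{n+1}\in\continuous(X,\R_{>0})$. The point is numerical: $f$ has exactly $n+1$ real coordinates while $\dim X\leq n$, so that Lemma \ref{bigdimthlemma} applies to $n+1$ pairs of sets, one for each coordinate.

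Concretely, for $i=1,\ldots,n+1$ I would put
$$F_i=\{x\in X:f_i(x)\leq -c(x)\},\qquad U_i=\{x\in X:f_i(x)<-\tfrac12 c(x)\},$$
so that $F_i$ is closed, $U_i$ is open, and $F_i\subset U_i$. Lemma \ref{bigdimthlemma} then yields open sets $V_i,W_i$ with $F_i\subset V_i\subset\overline V_i\subset W_i\subset U_i$ and $\bigcap_{i=1}^{n+1}(\overline W_i-V_i)=\emptyset$. For each $i$, I would apply Urysohn's Lemma to the disjoint closed sets $X-W_i$ and $\overline V_i$ to obtain $\phi_i\in\continuous(X,[0;1])$ with $\phi_i=0$ on $X-W_i$ and $\phi_i=1$ on $\overline V_i$, and then define $g=(g_1,\ldots,g_{n+1})$ by $g_i=f_i+c(1-\phi_i)$. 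Then $g\in\continuous(X,\R^{n+1})$, and since $|g_i-f_i|\leq c$ pointwise, $\Vert g(x)-f(x)\Vert\leq\sqrt{n+1}\,c(x)=\tfrac12\epsilon(x)<\epsilon(x)$ for all $x$.

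The last step is to verify that $g(x)\neq 0$ for every $x$, i.e.\ that some coordinate $g_i(x)$ is nonzero. Fixing $x$, the condition $\bigcap_i(\overline W_i-V_i)=\emptyset$ provides an index $i_0$ with $x\in V_{i_0}$ or $x\notin\overline W_{i_0}$. If $x\in V_{i_0}$, then $\phi_{i_0}(x)=1$, so $g_{i_0}(x)=f_{i_0}(x)$, which is nonzero since $V_{i_0}\subset U_{i_0}\subset\{f_{i_0}\neq 0\}$. If $x\notin\overline W_{i_0}$, then $\phi_{i_0}(x)=0$, so $g_{i_0}(x)=f_{i_0}(x)+c(x)$; and since $F_{i_0}\subset V_{i_0}\subset W_{i_0}\subset\overline W_{i_0}$, the point $x$ lies outside $F_{i_0}$, i.e.\ $f_{i_0}(x)>-c(x)$, whence $g_{i_0}(x)>0$. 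Either way $g(x)\neq 0$, as desired. I expect the main obstacle to be precisely the design of the sets $F_i,U_i$ so that all three requirements hold simultaneously --- the perturbation stays below $\epsilon$, $g_i$ is nonzero on $V_i$, and $g_i$ is nonzero off $\overline W_i$ --- since the cutoffs $\phi_i$ are not locally constant and so $g_i$ is not under control on the transition annuli $W_i-\overline V_i$. The whole force of $\dim X\leq n$, repackaged by Lemma \ref{bigdimthlemma} as $\bigcap_i(\overline W_i-V_i)=\emptyset$, is exactly what guarantees that at such a point some other coordinate $g_j$ is back in a region where it is controlled.
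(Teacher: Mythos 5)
Your proof is correct and follows essentially the same route as the paper: Lemma \ref{bigdimthlemma} supplies the empty intersection $\bigcap_i(\overline W_i - V_i)=\emptyset$, Urysohn's Lemma produces the cutoffs $\phi_i$, and at each $x$ the empty intersection guarantees a coordinate $i_0$ outside the transition annulus where $g_{i_0}(x)$ is under control. Your one-sided choice of sets ($F_i\subset U_i$ on the negative side of $f_i$ only) and the additive perturbation $g_i=f_i+c(1-\phi_i)$ is a mild simplification of the paper's two-sided $F_i,G_i$ and piecewise-replaced $g_i$ --- it makes continuity of $g$ manifest and yields $\Vert g-f\Vert<\epsilon$ directly without the paper's implicit rescaling --- but the logical skeleton is identical.
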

\begin{proof}
Write $f=(f_1,\ldots,f_{n+1})$ where the $f_i\in\continuous(X,\R)$ are continuous functions. For $1\leq i\leq n+1$, let
\begin{eqnarray*}
F_i & = & \{x\in X;\;f_i(x)\geq\epsilon(x)\}\\
G_i & = & \{x\in X;\;f_i(x)\leq-\epsilon(x)\}\\
\end{eqnarray*}
Since $F_i\subset X-G_i$ for all $i$, it follows from Lemma \ref{bigdimthlemma} that there exist open subsets $V_1,\ldots,V_{n+1}$ and $W_1,\ldots,W_{n+1}$ of $X$ such that
$$F_i\subset V_i\subset \overline V_i\subset W_i\subset X-G_i \;\;\;\text{for}\;i=1,\ldots,n+1$$
and
\begin{equation}\label{emptycap}\bigcap_{i=1}^{n+1}\left(\overline W_i - V_i\right)\;=\;\emptyset.\end{equation}
Since $X$ is normal, by Urysohn's Lemma \ref{urysohnlemma}, for all $i$ there exists a continuous function $\phi_i:X\rightarrow[-1;1]$ such that $\phi_i(\overline V_i)=1$ and $\phi_i(X-W_i)=-1$. Since $F_i\subset V_i$, we have $\phi_i(F_i)=1$. Since $G_i\subset X-W_i$, we have $\phi_i(G_i)=-1$. We may therefore define a continuous function $g_i:X\rightarrow\R$ by letting, for all $x\in X$,
$$g_i(x)=\left\{\begin{array}{ll}f_i(x) & \text{if}\;x\in F_i\cup G_i \\ \epsilon(x)\phi_i(x) & \text{if}\;x\not\in F_i\cup G_i \end{array}\right.$$
We now define $g:X\rightarrow\R^{n+1}$ by letting $g(x)=(g_1(x),\ldots,g_{n+1}(x))$.
It is clear that $\Vert g_i(x) - f_i(x)\Vert\leq 2\epsilon(x)$ for all $x$ and all $i$, and therefore
$$\Vert g(x)-f(x)\Vert \leq 2\sqrt{n+1}\,\epsilon(x)\;\;\;\text{for all}\;x\in X.$$
It remains to show that $g(x)\neq 0$ for all $x\in X$. Suppose that $g(x)=0$ for some $x\in X$. Then $g_i(x)=0$ for $i=1,\ldots,n+1$. It follows that $x\not\in F_i\cup G_i$, so that $g_i(x)=\epsilon(x)\phi_i(x)=0$. Since $\epsilon(x)>0$, it follows that $\alpha_i(x)=0$. This in turn entails that $x\not\in\overline V_i$ and $x\not\in X-W_i$. Therefore, $x\in W_i-\overline V_i$ for all $i=1,\ldots,n+1$, contradicting equation (\ref{emptycap}).
\end{proof}

Here is now a variant where instead of avoiding just one point, we now avoid any finite number of maps pointwise. This is the first, but not the only place where it is useful to have introduced the non-constant $\epsilon$ in Lemma \ref{avoidzero}. This variant will be especially useful when we will establish separation of eigenvalues over low-dimensional spaces (Theorems \ref{separationdim2} and \ref{separationdim4}).

\begin{lemma}\label{avoidkmaps}
Let $n\in\nnegint$. Let $X$ be a normal space such that $\dim X\leq n$. For any $f\in\continuous(X,\R^{n+1})$, for any $k\in\posint$, for any $h_1,\ldots,h_k\in\continuous(X,\R^{n+1})$, and for any $\epsilon\in\continuous(X,\R_{>0})$, there exists $g\in\continuous(X,\R^{n+1})$ such that for all $x\in X$, $\Vert g(x)-f(x)\Vert<\epsilon(x)$ and
$$g(x)\not\in\{h_1(x),\ldots,h_k(x)\}.$$
\end{lemma}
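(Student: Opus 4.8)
The plan is to reduce to Lemma~\ref{avoidzero} by an induction on $k$, perturbing $f$ in $k$ successive small steps, where the $j$-th step pushes the current map off the graph of $h_j$ while keeping it off the graphs of $h_1,\ldots,h_{j-1}$. The only thing requiring care is the bookkeeping of the sizes of the perturbations.

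Concretely, I would set $f_0=f$ and build $f_1,\ldots,f_k\in\continuous(X,\R^{n+1})$ inductively, maintaining the invariant that $f_j(x)\not\in\{h_1(x),\ldots,h_j(x)\}$ for all $x\in X$ and $\Vert f_j(x)-f(x)\Vert\leq(1-2^{-j})\epsilon(x)$ for all $x\in X$. Given $f_{j-1}$, the function $\delta_{j-1}(x):=\min_{1\leq i\leq j-1}\Vert f_{j-1}(x)-h_i(x)\Vert$ (with the understanding that for $j=1$ there is no such constraint) is continuous and strictly positive by the invariant, so $\epsilon_j:=\min(2^{-j}\epsilon,\tfrac12\delta_{j-1})$ is a continuous, strictly positive function. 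I would then apply Lemma~\ref{avoidzero} to the map $f_{j-1}-h_j\in\continuous(X,\R^{n+1})$ and to $\epsilon_j$, obtaining $\psi_j\in\continuous(X,\R^{n+1})$ with $\Vert\psi_j(x)-(f_{j-1}(x)-h_j(x))\Vert<\epsilon_j(x)$ and $\psi_j(x)\neq 0$ for all $x$, and set $f_j:=\psi_j+h_j$.

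It then remains to verify the invariant is preserved. Since $f_j(x)-h_j(x)=\psi_j(x)\neq 0$, the new map avoids $h_j$; and since $\Vert f_j(x)-f_{j-1}(x)\Vert<\epsilon_j(x)\leq\tfrac12\delta_{j-1}(x)$, for each $i<j$ we get $\Vert f_j(x)-h_i(x)\Vert\geq\Vert f_{j-1}(x)-h_i(x)\Vert-\Vert f_j(x)-f_{j-1}(x)\Vert>\delta_{j-1}(x)-\tfrac12\delta_{j-1}(x)>0$, so $f_j$ still avoids $h_1,\ldots,h_{j-1}$. Finally $\Vert f_j(x)-f(x)\Vert\leq\Vert f_j(x)-f_{j-1}(x)\Vert+\Vert f_{j-1}(x)-f(x)\Vert<2^{-j}\epsilon(x)+(1-2^{-(j-1)})\epsilon(x)=(1-2^{-j})\epsilon(x)$. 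Taking $g:=f_k$ then yields $g(x)\not\in\{h_1(x),\ldots,h_k(x)\}$ and $\Vert g(x)-f(x)\Vert\leq(1-2^{-k})\epsilon(x)<\epsilon(x)$ for all $x$, as required. There is no genuinely hard step here; the point is simply that each correction must be small compared both to a geometrically shrinking share of the global budget $\epsilon$ and to the current separation $\delta_{j-1}$ from the previously avoided maps, and the freedom to use a non-constant $\epsilon$ in Lemma~\ref{avoidzero} is exactly what makes this localization possible.
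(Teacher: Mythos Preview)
Your proof is correct and takes essentially the same approach as the paper: both argue by induction on $k$, reduce to Lemma~\ref{avoidzero} via the translation $f\mapsto f-h_j$, and at each step shrink the allowed perturbation to be smaller than both a geometric share of $\epsilon$ and the current minimum distance to the previously avoided maps. The only cosmetic difference is that the paper phrases the induction recursively (assume the lemma for $k$, deduce it for $k+1$) while you unwind it into an explicit iterative construction with the running invariant $\Vert f_j-f\Vert\leq(1-2^{-j})\epsilon$; the mathematical content is identical.
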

\begin{proof}
Let us work by induction on $k$. The case $k=1$ obviously reduces to Lemma \ref{avoidzero} by just translating by $-h_1$. Let us now suppose the result to hold for a fixed $k$ and let us establish it for $k+1$. Applying the result for that $k$ and for $\epsilon/2$, we obtain a map $\gamma\in\continuous(X,\R^{n+1})$ such that for all $x\in X$, $\Vert \gamma(x)-f(x)\Vert<\epsilon(x)/2$ and $\gamma(x)\not\in\{h_1(x),\ldots,h_k(x)\}.$ Now let
$$\epsilon'(x) = \min\left(\epsilon(x)/2, \min_{i=1,\ldots,k}\Vert \gamma(x) - h_i(x)\Vert\right).$$
Applying the result for $k'=1$, for $h_1'=h_{k+1}$, and for $\epsilon'$, to the map $\gamma$, we obtain a new map $g\in\continuous(X,\R^{n+1})$ such that for all $x\in X$,
$$\Vert g(x)-f(x)\Vert<\epsilon(x)/2 + \epsilon(x)'\leq\epsilon(x)$$
and $g(x)\neq h_{k+1}(x)$, and such that moreover $\Vert g(x)-\gamma(x)\Vert<\Vert g(x)-h_i(x)\Vert$ for $i=1,\ldots,k$, which entails that $g(x)\neq h_i(x)$ for $i=1,\ldots,k$.
\end{proof}

Here is another variant for maps into topological manifolds, which we will have to apply to the spheres $S^n$ in our main Hessenberg reduction process (Theorem \ref{hessenbergdefault}).

\begin{lemma}\label{avoidinmanifold}
Let $n\in\nnegint$. Let $X$ be a normal space such that $\dim X\leq n$. Let $Y$ be a metric topological manifold of dimension $n+1$, with metric denoted by $d$. Suppose that $Y$ has an atlas consisting of bi-Lipschitz charts. Let $Z$ be a discrete subset of $Y$. For any $f\in\continuous(X,Y)$ and for any $\epsilon\in\continuous(X,\R_{>0})$, there exists $g\in\continuous(X,Y)$ such that for all $x\in X$, $d(g(x),f(x))<\epsilon(x)$ and $g(x)\not\in Z$. Moreover, for any neighborhood $W$ of $Z$ in $Y$, $g$ may be chosen so that $g(x)=f(x)$ for all $x\not\in f^{-1}(W)$.
\end{lemma}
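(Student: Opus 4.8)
The goal is to reduce the manifold case to the already-proved Euclidean case (Lemma \ref{avoidkmaps}, or really just Lemma \ref{avoidzero}), localizing via the atlas and then patching the local modifications together with Urysohn functions. The plan is as follows.

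First I would set up a locally finite situation. Since $Z$ is discrete in $Y$, for each $z\in Z$ choose a bi-Lipschitz chart $\psi_z$ from a neighborhood $\Omega_z$ of $z$ onto an open subset of $\R^{n+1}$, small enough that the $\Omega_z$ are pairwise disjoint, that $\overline{\Omega_z}$ is still carried bi-Lipschitzly into $\R^{n+1}$, and that $\Omega_z\subset W$ (where $W$ is the given neighborhood of $Z$); we may also arrange $\psi_z(z)=0$. Now consider the open set $f^{-1}\bigl(\bigcup_z \Omega_z\bigr)$. Outside this set $f$ already avoids $Z$, so nothing needs to be done there; this is also where we will get the ``$g=f$ off $f^{-1}(W)$'' refinement for free, since $\bigcup_z\Omega_z\subset W$. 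The problem is genuinely local: on $f^{-1}(\Omega_z)$ we want to push $f$ slightly off the single bad value $z$.

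The main step is the local correction. Fix $z$ and work on $A_z := f^{-1}(\overline{\Omega_z})$, a closed subset of $X$, hence itself normal, and of covering dimension $\leq n$ (covering dimension does not increase on closed subsets of normal spaces — this is standard and I would cite \cite{N64}). On $A_z$ the composite $\psi_z\circ f$ is a continuous map into $\R^{n+1}$. Using the bi-Lipschitz constants of $\psi_z$, convert the given $\epsilon$ into a continuous positive function $\epsilon'$ on $A_z$ small enough that an $\epsilon'$-perturbation in the chart, transported back by $\psi_z^{-1}$, stays inside $\Omega_z$ and moves $f$ by less than $\epsilon$ in the metric $d$; we also take $\epsilon'$ to tend to $0$ on the portion of $A_z$ that $f$ maps to $\partial\Omega_z$ (concretely, multiply by a Urysohn function that vanishes on $f^{-1}(\partial\Omega_z)$ and is $1$ on the closed set $f^{-1}(\{y\in\Omega_z: d(y,z)\le \delta\})$ for a suitable $\delta$). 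Apply Lemma \ref{avoidzero} on $A_z$ with this $\epsilon'$ to the map $\psi_z\circ f$: we obtain $\tilde g_z\in\continuous(A_z,\R^{n+1})$ with $\Vert \tilde g_z - \psi_z\circ f\Vert<\epsilon'$ everywhere and $\tilde g_z(x)\neq 0$ for all $x\in A_z$. Because $\epsilon'$ vanishes on $f^{-1}(\partial\Omega_z)$, the map $\psi_z^{-1}\circ\tilde g_z$ agrees with $f$ on that set and lands in $\Omega_z$ throughout, so it glues continuously with $f$ to define a global continuous map $g$ that avoids $z$ on $f^{-1}(\Omega_z)$, equals $f$ outside $f^{-1}(\Omega_z)$, and satisfies $d(g(x),f(x))<\epsilon(x)$.

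Finally I would assemble these: since the $\Omega_z$ are disjoint, the local corrections do not interfere, so performing all of them simultaneously yields a single $g\in\continuous(X,Y)$ with $d(g(x),f(x))<\epsilon(x)$ for all $x$, $g(x)\notin Z$ for all $x$, and $g(x)=f(x)$ whenever $x\notin f^{-1}\bigl(\bigcup_z\Omega_z\bigr)\supset X\setminus f^{-1}(W)$. (Continuity of the glued map at boundary points is where one must be slightly careful: a point $x_0$ with $f(x_0)\in\partial\Omega_z$ has a neighborhood on which $g$ is pinned between $f$ and something $\epsilon'$-close to $f$, with $\epsilon'\to 0$ there, so $g$ is continuous at $x_0$; this is the only point that needs more than bookkeeping.) I expect the main obstacle to be exactly this gluing across chart boundaries — choosing the cutoff on $\epsilon'$ so that the chart-based perturbation both stays inside $\Omega_z$ and matches $f$ continuously on the frontier — whereas invoking Lemma \ref{avoidzero} in the chart, and the disjointness that makes the patches independent, are routine.
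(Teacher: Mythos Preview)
Your overall architecture matches the paper's: localize to a chart around each $z\in Z$, push through the bi-Lipschitz chart to $\R^{n+1}$, invoke Lemma \ref{avoidzero}, and glue. The disjointness of the $\Omega_z$ and the inclusion $\bigcup_z\Omega_z\subset W$ give the ``moreover'' clause just as you say.

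There is, however, a genuine gap at the step you yourself flag as delicate. You propose to apply Lemma \ref{avoidzero} on $A_z$ with a tolerance $\epsilon'$ that \emph{vanishes} on $f^{-1}(\partial\Omega_z)$, so that $\tilde g_z$ will literally equal $\psi_z\circ f$ there and the gluing is automatic. But Lemma \ref{avoidzero} is stated for $\epsilon\in\continuous(X,\R_{>0})$: the tolerance must be strictly positive everywhere. With $\epsilon'(x)=0$ the conclusion $\Vert\tilde g_z(x)-\psi_z(f(x))\Vert<\epsilon'(x)$ is vacuously false, and the proof of that lemma (which divides by $\epsilon$) breaks down. If instead you keep $\epsilon'$ strictly positive and afterwards interpolate $\tilde g_z$ back to $\psi_z\circ f$ using your Urysohn cutoff, the convex combination can pass through $0$, so you lose the avoidance of $z$ exactly where you need it.

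The paper's proof resolves this with a specific device that your plan is missing: choose $\eta>0$ with $B_{3\eta}\subset\psi_z(\Omega_z)$, apply Lemma \ref{avoidzero} with the constant positive tolerance $\eta$ to get $h'$ with $\Vert h'-\psi_z\circ f\Vert<\eta$ and $h'\neq 0$, and then interpolate between $h'$ and $\psi_z\circ f$ only on the annulus $\eta\leq\Vert\psi_z\circ f\Vert\leq 2\eta$, setting $g'=h'$ on $\{\Vert\psi_z\circ f\Vert<\eta\}$ and $g'=\psi_z\circ f$ on $\{\Vert\psi_z\circ f\Vert>2\eta\}$. On the annulus $\Vert\psi_z\circ f\Vert\geq\eta$ and $\Vert h'-\psi_z\circ f\Vert<\eta$, so every convex combination has norm strictly positive; thus $g'$ avoids $0$ everywhere. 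Moreover $g'$ agrees with $\psi_z\circ f$ outside $f^{-1}(\psi_z^{-1}(B_{2\eta}))$, a set whose closure lies inside $f^{-1}(\Omega_z)$, so the glued map is continuous for free. Replacing your ``$\epsilon'$ that vanishes on the boundary'' by this annulus interpolation is exactly the missing idea.
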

\begin{proof}
It is enough to prove that for any $z\in Z$, for any neighborhood $U$ of $z$ in $Y$, there is an open subset $V$ of $U$ such that $V\cap Z=\{z\}$, and a continuous map $g\in\continuous(X,Y)$ such that for all $x\in X$, $d(g(x),f(x))<\epsilon(x)$, $g(x)\not\in Z$, and if $x\not\in f^{-1}(V)$ then $g(x)=f(x)$.\\

So let $z\in Z$ and let $U$ be a neighborhood of $z$ in $Y$. By assumption, there exists an open subset $V$ of $U$ such that $V\cap Z=\{z\}$ and a bi-Lipschitz map $\phi:V\rightarrow\Omega$ for some open subset $\Omega$ of $\R^{n+1}$. Moreover we may choose $\phi$ and $\Omega$ so that $\phi(z)=0$. For any $r>0$, let $B_r$ denote the open ball in $\R^{n+1}$ of radius $r$ centered at 0 in $\R^{n+1}$. Choose $\eta>0$ so that $$B_{3\eta}\subset \Omega.$$
Let $A=f^{-1}(V)$. Let $f'\in\continuous(A,\Omega)$ be defined as $f'=\phi\circ f$. By Lemma \ref{avoidzero}, there exists a map $h'\in\continuous(A, \R^{n+1})$ such that $\Vert h'-f'\Vert<\eta$ and $0\not\in h'(A)$. Now define a map $g'\in\continuous(A,B_{2\eta})$ as follows. Let $x\in A$. If $f'(x)\in B_\eta$, then set $g'(x)=h'(x)$. If $f'(x)\not\in B_{2\eta}$, then set $g'(x)=f'(x)$. Otherwise, we have $\eta\leq \Vert f'(x) \Vert \leq 2\eta$, we let
$$t = \frac{\Vert f'(x) \Vert}{\eta}-1,$$
and set $g'(x) = t\,f'(x)+(1-t)\,h'(x)$.
Define $g\in\continuous(A,V)$ as $g=\phi^{-1}\circ g'$. Notice that $g$ agrees with $f$ outside of $A$, hence $g$ can be extended to all of $X$ by letting $g(x)=f(x)$ for all $x\not\in A$.
\end{proof}

\section{A class of Hessenberg-like matrices}

\begin{definition}
\label{hess}
For $n\in\posint$ and $k\in\{0,\ldots,n\}$, let $\hess_n^k$ be the set of all matrices $m\in\mat_n$ such that
$$m_{ij}=0\;\;\;\text{whenever}\;j\leq k\;\text{and}\;i\geq j+2$$
and
$$m_{ij}\in\R_{>0}\;\;\;\text{whenever}\;j\leq k\;\text{and}\;i=j+1.$$
\end{definition}

Notice that $\hess_n^0=\mat_n$ and that $\hess_n^{n-1}=\hess_n^n$ is the set of all matrices in $\mat_n$ that are Hessenberg and that have positive coefficients on the first subdiagonal. In particular, since the subdiagonal coefficients are nonzero, these are called \emph{unreduced Hessenberg} matrices. More generally, for any $k\in\{0,\ldots,n\}$, the matrices in $\hess_n^k$ are ``unreduced Hessenberg in their $k$ first columns''. They are the matrices of the form
\[
\xymatrix@!@=2pt{
\ast\ar@{-}[rr]\ar@{-}[rrdd] & & \ast\ar@{-}[dd] & \ast\ar@{-}[rrrr]\ar@{-}[ddd] & & & & \ast\ar@{-}[ddddddd] \\
+\ar@{-}[rrdd] & & & & & & & \\
0\ar@{-}[rrdd]\ar@{-}[ddddd] & & \ast & & & & & \\
& & + & \ast\ar@{-}[rrrrdddd] & & & & \\
& & 0\ar@{-}[ddd] & \ast\ar@{-}[ddd]\ar@{-}[rrrddd] & & & & \\
& & & & & & & \\
& & & & & & & \\
0\ar@{-}[rr] & & 0 & \ast\ar@{-}[rrr] & & & \ast & \ast
}
\]
where a ``+'' indicates a coefficient in $\R_{>0}$.\\

Let us make the following observations on the eigenvalues of matrices in $\hess_n^k$.
\begin{proposition}
\label{maxeigmulhess}
Let $n\in\posint$ and $k\in\{0,\ldots,n\}$. Let $p=\max(n-k,1)$. Let $x\in\hess_n^k$. It follows that all eigenvalues of $x$ have multiplicity at most $p$.
\end{proposition}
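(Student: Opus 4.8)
The statement concerns the \emph{geometric} multiplicity of an eigenvalue, that is, $\dim\ker(x-\lambda\id)$ for $\lambda\in\C$; one cannot hope to bound the algebraic multiplicity this way, since a companion matrix already lies in $\hess_n^n$ (where $p=1$) yet may have characteristic polynomial $(t-\lambda)^n$. So the plan is to show that for every $\lambda\in\C$ one has $\rk(x-\lambda\id)\geq n-p$, whence $\dim\ker(x-\lambda\id)\leq p$. Writing $m=\min(k,n-1)$, one checks $n-m=\max(n-k,1)=p$, so it is enough to exhibit an invertible $m\times m$ submatrix of $x-\lambda\id$.

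The submatrix I would use is the one occupying rows $2,\dots,m+1$ and columns $1,\dots,m$ of $x-\lambda\id$ (these rows exist because $m+1\leq n$); call it $S$, so that $S_{ab}=(x-\lambda\id)_{a+1,b}$ for $a,b\in\{1,\dots,m\}$. Since $m\leq k$, every column index $b\leq m$ falls under the hypotheses of Definition~\ref{hess}: for $a>b$ we have $a+1\geq b+2$, hence $x_{a+1,b}=0$, and since $a+1>b$ the scalar matrix $\lambda\id$ does not touch this entry, so $S_{ab}=0$; for $a=b$ we get $S_{aa}=x_{b+1,b}\in\R_{>0}$, again unaffected by $\lambda\id$. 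Thus $S$ is upper triangular with strictly positive diagonal, so $\det S=\prod_{a=1}^m x_{a+1,a}>0$ and $S$ is invertible.

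Consequently $\rk(x-\lambda\id)\geq m$, so $\dim\ker(x-\lambda\id)\leq n-m=p$, and since $\lambda$ was arbitrary this bounds the multiplicity of every eigenvalue of $x$ by $p$. I do not foresee a genuine obstacle; the only points demanding care are the boundary case $k=n$, which forces one to use $n-1$ rather than $n$ columns (this is what the $\min$ with $n-1$ records), and keeping track of the fact that the positivity of the subdiagonal entries is guaranteed precisely for the columns of index at most $k\geq m$ that the submatrix $S$ uses.
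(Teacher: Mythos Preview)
Your argument is correct and is essentially the same as the paper's: both observe that $x-\lambda\,1_n\in\hess_n^k$ and that its first $q=\min(k,n-1)$ columns are linearly independent, whence $\rk(x-\lambda\,1_n)\geq q$ and $\dim\ker(x-\lambda\,1_n)\leq n-q=p$. The paper states the linear independence of those columns without justification, while you make it explicit by exhibiting the upper-triangular $m\times m$ submatrix on rows $2,\dots,m+1$; your remark that the bound concerns geometric (not algebraic) multiplicity is a worthwhile clarification that the paper leaves implicit.
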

\begin{proof}
Let $\lambda$ be an eigenvalue of $x$. Notice that $x-\lambda\,1_n$ still belongs to $\hess_n^k$. Let $q = \min(n-1,k)$. It follows from the definition of $\hess_n^k$ that the $q$ first columns of $x-\lambda\,1_n$ are linearly independent, hence $x-\lambda\,1_n$ has rank at least $q$, hence $\dim\ker(x-\lambda\,1_n)\leq n-q=p$, hence $\lambda$ is of multiplicity at most $p$.
\end{proof}
The proof of our second lemma closely follows the proof of a theorem well-known as the Sturm Sequence Property (see \cite{GvL96}, Theorem 8.5.1).
\begin{proposition}
\label{mineigmul1hess}
Let $n\in\posint$ and $k\in\{0,\ldots,n\}$. Let $p=\max(n-k-1,1)$. Let $x\in\hess_n^k$ be self-adjoint. It follows that at most $p$ eigenvalues of $x$ have multiplicity more than 1.
\end{proposition}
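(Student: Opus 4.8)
The plan is to turn the self-adjointness of $x$ into a block normal form, and then to run the three-term recurrence that powers the Sturm sequence property. First, combining the vanishing and positivity conditions of Definition~\ref{hess} with $x_{ij}=\overline{x_{ji}}$, one sees that a self-adjoint $x\in\hess_n^k$ must have the form
$$x=\begin{pmatrix} J & b\,e_k & 0\\ b\,e_k^{T} & d & w^{*}\\ 0 & w & C\end{pmatrix},$$
where $J\in\mat_k$ is a self-adjoint tridiagonal matrix with positive off-diagonal entries (an unreduced Jacobi matrix), $b\in\R_{>0}$, $e_k\in\C^{k}$ is the last standard basis vector, $d\in\R$, $w\in\C^{n-k-1}$, and $C\in\mat_{n-k-1}$ is self-adjoint. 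If $k\in\{n-1,n\}$ the last block is absent and $x$ is itself an unreduced Jacobi matrix of size $n$; such a matrix has simple spectrum, so no eigenvalue has multiplicity more than $1$, and the claim holds since $p\geq 1$. From now on I would assume $k\leq n-2$, so that $C$ has size $n-k-1=p\geq 1$.

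The core of the proof is the claim that \emph{every eigenvalue $\lambda$ of $x$ of multiplicity at least $2$ is an eigenvalue of $C$}. To see this, fix such a $\lambda$, set $K=\ker(x-\lambda\,1_n)$, and let $K_0=\{\xi\in K:\xi_1=0\}$. For $1\leq r\leq k$, the $r$-th equation of $x\xi=\lambda\xi$ reads $x_{r,r+1}\,\xi_{r+1}=(\lambda-x_{rr})\,\xi_r-x_{r,r-1}\,\xi_{r-1}$ (with the convention $\xi_0=0$); since each $x_{r,r+1}$ is a positive real, a one-line induction shows that $\xi\in K_0$ forces $\xi_1=\cdots=\xi_{k+1}=0$, \ie every vector in $K_0$ is supported on its last $n-k-1$ coordinates, which I denote $\xi'\in\C^{n-k-1}$. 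Feeding such a $\xi$ into rows $k+2,\dots,n$ of $x\xi=\lambda\xi$ gives exactly $C\xi'=\lambda\xi'$, so $\xi\mapsto\xi'$ is an injection $K_0\hookrightarrow\ker(C-\lambda\,1_{n-k-1})$, while $\xi\mapsto\xi_1$ is an injection $K/K_0\hookrightarrow\C$. Hence $\dim K\leq 1+\dim\ker(C-\lambda\,1_{n-k-1})$, so $\dim K\geq 2$ forces $\ker(C-\lambda\,1_{n-k-1})\neq 0$, proving the claim. As $C$ has at most $n-k-1=p$ distinct eigenvalues, at most $p$ eigenvalues of $x$ can have multiplicity more than $1$.

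I do not expect a genuine obstacle; the delicate points are all bookkeeping. One must check that self-adjointness together with Definition~\ref{hess} really does force the positive entries into the subdiagonal positions of columns $1,\dots,k$ only (this is what lets the recurrence be solved for $\xi_{r+1}$ all the way up to $r=k$) and the off-diagonal blocks to vanish as displayed, and one must dispatch the degenerate cases $k=0$ (no $J$-block, the argument is unchanged) and $k\in\{n-1,n\}$ (no $C$-block). In fact those last cases are subsumed by taking $C$ to be the empty matrix, and the simplicity of the spectrum of an unreduced Jacobi matrix used there is exactly the classical consequence of the Sturm sequence property, so the whole argument stays within that one circle of ideas.
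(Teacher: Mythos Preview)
Your argument is correct. Both proofs reach the same endpoint---every multiple eigenvalue of $x$ must already be an eigenvalue of the bottom-right $(n-k-1)\times(n-k-1)$ block---but they get there by different mechanisms. The paper works with the characteristic polynomials $p_i(\lambda)$ of the trailing principal submatrices and uses the Cauchy interlacing inequalities (via the minimax principle) to conclude that a repeated root of $p_1$ is also a root of $p_2$; the three-term determinantal recurrence then propagates the vanishing down to $p_{k+2}$. You instead run the three-term recurrence on the \emph{components of an eigenvector}: a multiple eigenspace must meet the hyperplane $\{\xi_1=0\}$ nontrivially, and from $\xi_1=0$ the positivity of the subdiagonal forces $\xi_1=\cdots=\xi_{k+1}=0$, so the surviving part is an eigenvector of $C$. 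Your route avoids the interlacing theorem entirely and, as a bonus, yields the sharper inequality $\dim\ker(x-\lambda)\leq 1+\dim\ker(C-\lambda)$, which in particular reproves Proposition~\ref{maxeigmulhess} by the same stroke. The paper's route, on the other hand, stays closer to the classical Sturm-sequence formulation and makes the connection with \cite{GvL96} and \cite{W65} explicit.
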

\begin{proof}
The statement is trivial if $n\leq 2$, so let us assume that $n\geq 3$. Also the statement is trivial if $k=0$, so let us assume that $k\geq 1$. For $i\in\{0,\ldots,n-1\}$, let $C_i$ denote the bottom-right $(n-i+1)\times (n-i+1)$ corner of $x$. Notice that $c_1=x$ and that $c_n$ is the $1\times1$ matrix $(x_{nn})$. Let
$$p_i(\lambda) = \det(c_i - \lambda\,1_{n-i+1})$$
be its characteristic polynomial. Expanding determinants with respect to the first column, one easily establishes the relation
\begin{equation}
\label{recurdet}
p_i(\lambda) = x_{ii} p_{i+1}(\lambda) - x_{i+1,i}^2 p_{i+2}(\lambda)\;\;\;\text{for}\;1\leq i\leq \min(k,n-2)
\end{equation}
It follows (see \cite{W65}, \S 47) from the minimax principle that if $\lambda_1\geq\ldots\geq\lambda_n$ are the eigenvalues of $x=c_1$ and $\mu_1\geq\ldots\geq\mu_{n-1}$ are the eigenvalues of $c_2$, then the following Interlacing Property holds:
\begin{equation}\label{interlacing}
\lambda_1\geq \mu_1 \geq \lambda_2 \geq \ldots \geq \mu_{n-1} \geq \lambda_n.
\end{equation}
Now let $\lambda$ be an eigenvalue of $x$, and suppose that $\lambda$ has multiplicity more than $1$. It follows from the inequalities (\ref{interlacing}) that $\lambda$ is not only an eigenvalue of $x=c_1$ but also an eigenvalue of $c_2$. In other words:
$$p_1(\lambda)=p_2(\lambda)=0.$$
It follows from equation (\ref{recurdet}) that $x_{2,1}^2 p_{3}(\lambda)=0$. Since $x\in\hess_n^k$ and $k\geq 1$, we have $x_{2,1}\neq 0$ and it follows that
$$p_3(\lambda)=0.$$
We can continue applying equation (\ref{recurdet}) iteratively as long as $i\leq\min(k,n-2)$. So, letting $j=\min(k,n-2)$, in the end we obtain
$$0=p_1(\lambda)=p_2(\lambda)=\cdots=p_{j+2}(\lambda).$$
Thus, $\lambda$ is an eigenvalue of $c_{j+2}$, which is a square matrix of size $n-j-1$ and therefore can't have more than $n-j-1=\max(n-k-1,1)$ distinct eigenvalues.
\end{proof}

\section{Hessenberg reduction of matrix fields}

The goal of this section is to establish the following theorem:

\begin{theorem}\label{hessenbergsummary}
Let $0\leq d<\infty$. Let $c$ be defined as follows:
$$c=\left\lbrace\begin{array}{ll}
0 & \text{if $d\leq 1$} \\
2 & \text{if $2\leq d\leq 3$} \\
\left\lceil \frac d2 \right\rceil + 1 & \text{if $d\geq 4$}.
\end{array}\right.$$
Let $X$ be a normal space of covering dimension $d$. Let $n\geq 1$. Let $f\in\continuous(X,\mat_n)$. Let $\epsilon\in\continuous(X,\R_{>0})$. There exist $g\in\continuous(X,\mat_n)$ and $u\in\continuous(X,\unitaries_n)$ such that for all $x\in X$,
\begin{itemize}
\item $g(x)-f(x)$ is self-adjoint,
\item $\Vert f(x)-g(x)\Vert <\epsilon(x)$,
\item the matrix $(ugu^*)(x)$ belongs to $\hess_n^{n-c}$ (see Definition \ref{hess}).
\end{itemize}
\end{theorem}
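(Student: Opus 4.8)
The plan is to reduce Theorem \ref{hessenbergsummary} to an inductive "one column at a time" procedure. The classical Hessenberg reduction of a single matrix proceeds column by column: having made the first $j$ columns unreduced Hessenberg, one looks at the subcolumn $(m_{j+2,j+1},\ldots,m_{n,j+1})\in\C^{n-j-1}$ and conjugates by a unitary acting only on the last $n-j-1$ coordinates so as to rotate this subvector onto a positive multiple of the first basis vector; this clears column $j+1$ below the subdiagonal without disturbing columns $1,\ldots,j$. Over a base space $X$, the obstruction is that this subvector may vanish at some points $x$, so the "rotate onto $e_1$" map is not globally continuous. The key idea is that the set of directions we must avoid is small: after a self-adjoint perturbation we may assume the relevant subcolumn, viewed as a map $X\to\C^{n-j-1}\cong\R^{2(n-j-1)}$, avoids $0$ provided $\dim X\le 2(n-j-1)-1$, and once it is nonvanishing we may normalize it to a map into the sphere $S^{2(n-j-1)-1}$ and use Lemma \ref{avoidinmanifold} to push it off a point (e.g.\ $-e_1$, a bad direction for a continuously-chosen rotation) whenever $\dim X\le 2(n-j-1)-1$ again; a Householder- or Givens-type formula then produces the unitary continuously. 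So each successful column costs us nothing in $c$ as long as the remaining block is large enough relative to $d$.

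The detailed steps I would carry out:

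\textbf{Step 1 (formal induction on columns).} I would prove an auxiliary statement: for $X$ normal with $\dim X\le d$, for each $k\in\{0,\ldots,n\}$, one can find $g$ with $g-f$ self-adjoint, $\|f-g\|<\epsilon$, and $u\in\continuous(X,\unitaries_n)$ with $ugu^*\in\hess_n^{k}$, \emph{provided} a dimension inequality relating $d$, $n$, and $k$ holds. Passing from stage $k$ to stage $k+1$ requires handling the $(k+1)$-st column of the current matrix $m=ugu^*$: its below-subdiagonal part lives in $\C^{n-k-1}$, i.e.\ $\R^{2(n-k-1)}$. Two sub-moves are needed here.

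\textbf{Step 2 (making the subcolumn nonvanishing).} Using a self-adjoint perturbation: I would add to $m$ a self-adjoint matrix supported in the $(k+1)$-st column and $(k+1)$-st row below/right of position $(k+1,k+1)$, chosen so that the subcolumn $v(x)=(m_{k+2,k+1}(x),\ldots,m_{n,k+1}(x))$ becomes nowhere zero. Since $v$ takes values in $\R^{2(n-k-1)}$, Lemma \ref{avoidzero} applies as soon as $\dim X\le 2(n-k-1)-1$, and the perturbation can be made $<\epsilon$; self-adjointness of the perturbation is preserved after conjugating back by $u^*$ only if $u$ does not mix the relevant rows/columns, which it does not since all conjugations so far act on coordinates $\ge k+2$ — I would need to be slightly careful and instead phrase the whole induction on $f$ itself, perturbing $f$ and re-deriving $g,u$, rather than perturbing the already-transformed matrix.

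\textbf{Step 3 (rotating onto the axis, continuously).} Once $v$ is nowhere zero, $v/\|v\|$ maps $X\to S^{2(n-k-1)-1}$. A continuous choice of $w(x)\in\unitaries_{n-k-1}$ with $w(x)v(x)=\|v(x)\|e_1$ exists locally but is obstructed globally precisely when $v/\|v\|$ hits the antipode of $e_1$; applying Lemma \ref{avoidinmanifold} to $S^{2(n-k-1)-1}$ (a compact bi-Lipschitz manifold) with $Z=\{-e_1\}$, which again needs $\dim X\le 2(n-k-1)-1$, I can perturb $v$ (correspondingly perturbing $f$ by a further small self-adjoint amount) so that $v/\|v\|$ avoids $-e_1$; then the standard formula for the Householder/Givens unitary taking $v$ to $\|v\|e_1$ is continuous on $S^{2(n-k-1)-1}\setminus\{-e_1\}$, yielding $w\in\continuous(X,\unitaries_{n-k-1})$. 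Setting $u_{\mathrm{new}}=(1_{k+1}\oplus w)\,u_{\mathrm{old}}$ puts the matrix in $\hess_n^{k+1}$.

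\textbf{Step 4 (bookkeeping and the formula for $c$).} The inductive step succeeds whenever $d\le 2(n-k-1)-1$, i.e.\ $k\le n-1-\lceil (d+1)/2\rceil$. Starting from $k=0$ we can therefore reach $k=n-c$ with $c=\lceil(d+1)/2\rceil$ roughly; I would then check that the case analysis $d\le1\Rightarrow c=0$, $2\le d\le3\Rightarrow c=2$, $d\ge4\Rightarrow c=\lceil d/2\rceil+1$ in the statement is exactly what this inequality delivers (the low-dimensional cases presumably absorb small improvements from the fact that $\hess_n^{n-1}=\hess_n^n$ and from being able to do the last step or two more cleverly — I would treat $d\in\{0,1\}$ and $d\in\{2,3\}$ as special cases where a sharper count is available). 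Finally, concatenating the finitely many perturbations from Steps 2–3 across all $c$-many stages, each taken $<\epsilon/(2c)$ say, gives a single $g$ with $g-f$ self-adjoint and $\|f-g\|<\epsilon$; composing the $u$'s gives the final unitary field.

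\textbf{Main obstacle.} The genuine difficulty is not any single step but the \emph{simultaneity} of two competing requirements: the perturbation $g-f$ must be self-adjoint, yet the natural perturbations (clearing/adjusting one column) are not self-adjoint unless done symmetrically in a matching row. Reconciling this — i.e.\ checking that a self-adjoint perturbation of $f$, transported through the unitary built so far, still lets us control the $(k+1)$-st subcolumn of the transformed matrix and that repeated stages do not undo earlier self-adjointness — is the technical heart of the argument. The dimension-counting via Lemmas \ref{avoidzero} and \ref{avoidinmanifold} is, by contrast, routine once this framing is set up correctly; and I expect the cleanest route is to prove the column-by-column statement directly for $f$ (carrying along $g$ and $u$ as data that get updated together at each stage), rather than iterating the theorem's three-bullet conclusion.
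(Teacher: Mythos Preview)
Your approach is essentially the paper's: Theorem \ref{hessenbergsummary} is proved as Proposition \ref{hessenbergdefault} (the column-by-column Householder argument you describe in Steps 1--3, giving $c=\lceil d/2\rceil+1$), followed by Proposition \ref{hessenbergdim3} (a Givens-rotation trick on the last $2\times1$ block to reach $c=2$ when $d\le3$) and Proposition \ref{hessenbergdim1} (a diagonal unitary to fix the sign of the last subdiagonal entry, reaching $c=0$ when $d\le1$).

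Two corrections. First, your dimension count in Step 3 is off by one: the sphere $S^{2(n-k-1)-1}$ has dimension $2(n-k-1)-1$, so Lemma \ref{avoidinmanifold} (stated for a manifold of dimension $n+1$ over a space of dimension $\le n$) requires $\dim X\le 2(n-k-1)-2$, not $\le 2(n-k-1)-1$. This is the binding constraint in the induction and is precisely what yields $c=\lceil d/2\rceil+1$; your looser count would give a result that is false for odd $d$. Second, what you flag as the ``main obstacle'' is not one: conjugation by a unitary preserves self-adjointness, so a self-adjoint perturbation $\delta$ of the transformed matrix $ugu^*$ (obtained by symmetrizing the column perturbation across the diagonal) pulls back to the self-adjoint perturbation $u^*\delta u$ of $g$, with the same norm. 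The paper does exactly this in one line; there is no tension between the two requirements you mention, and no need to rephrase the induction on $f$ rather than on the transformed matrix.
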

We will establish it as three separate propositions: \ref{hessenbergdefault}, \ref{hessenbergdim3}, \ref{hessenbergdim1}.
\begin{proposition}
\label{hessenbergdefault}
Let $n\in\posint$ and $d\in\nnegint$. Let $k=n-\lceil d/2 \rceil-1$. Let $X$ be a normal space such that $\dim X\leq d$. For any $f\in\continuous(X,\mat_n)$ and $\epsilon\in\continuous(X,\R_{>0})$, there exists $g\in\continuous(X,\hess_n^k)$ and $u\in\continuous(X,\unitaries_n)$ such that for all $x\in X$, $(f-u^*gu)(x)$ is self-adjoint and has norm less than $\epsilon(x)$.
\end{proposition}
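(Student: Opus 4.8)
The plan is to mimic, column by column, the classical Householder/Givens process that produces a Hessenberg form, but to carry it out continuously over $X$ and, crucially, to allow a small self-adjoint perturbation at each step so that a certain ``pivot column'' never vanishes. Concretely, I would induct on the column index $\ell$, maintaining the invariant that after $\ell$ steps there is a unitary field $u_\ell\in\continuous(X,\unitaries_n)$ and a field $g_\ell\in\continuous(X,\mat_n)$ with $g_\ell-f$ self-adjoint and $\Vert g_\ell-f\Vert$ small, such that $u_\ell g_\ell u_\ell^*$ is ``unreduced Hessenberg in its first $\ell$ columns'', i.e.\ lies in $\hess_n^\ell$. To pass from $\ell$ to $\ell+1$: write the current matrix $m=u_\ell g_\ell u_\ell^*$ in block form relative to the first $\ell+1$ coordinates; the part that must be zeroed out is the $(n-\ell-1)$-dimensional vector $v(x)$ consisting of the entries of column $\ell+1$ lying in rows $\ell+2,\dots,n$. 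If $v(x)\neq 0$ for all $x$, then $v/\Vert v\Vert$ is a continuous map $X\to S^{n-\ell-2}$ and one reflects it (via a Householder unitary acting only on coordinates $\ell+2,\dots,n$, which depends continuously on $v$) onto the first basis vector of that block, producing a positive subdiagonal entry; this conjugation does not disturb the first $\ell$ columns. The subtlety is that $v(x)$ may vanish somewhere, which is exactly where the perturbation and the dimension hypothesis enter.

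To handle the vanishing locus I would proceed in two sub-stages. First, before doing the reflection at step $\ell+1$, I perturb $m$ by a small \emph{self-adjoint} correction supported in the relevant corner so that $v$ becomes nowhere zero. The natural tool is Lemma \ref{avoidinmanifold} (or Lemma \ref{avoidzero}): we want to move the pointwise value of $v$ off $0\in\R^{n-\ell-1}$ (identifying $\C^{n-\ell-1}\cong\R^{2(n-\ell-1)}$), and this is possible by a perturbation of size $<\epsilon$ whenever $\dim X\le 2(n-\ell-1)-1$. But a perturbation of $v$ alone is not self-adjoint as a perturbation of $m$; the fix is to also add the conjugate entries in row $\ell+1$, columns $\ell+2,\dots,n$, which doubles nothing essential and keeps us inside $\hess_n^\ell$ since only entries at row $\ge \ell+2$ or column $\ge \ell+2$ in column/row $\ell+1$ are touched — these are precisely the entries that $\hess_n^\ell$ leaves unconstrained. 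The reflection at step $\ell+1$ is then conjugation by a unitary, so it preserves self-adjointness of the accumulated perturbation. Iterating from $\ell=0$ up to $\ell=k-1=n-\lceil d/2\rceil-2$, the constraint that is needed at the $(\ell+1)$-st step is $d\le 2(n-\ell-1)-1$, and the tightest case is the last step $\ell=k-1$, where $n-\ell-1=\lceil d/2\rceil+1$, giving $2(n-\ell-1)-1=2\lceil d/2\rceil+1\ge d+1>d$, so the hypothesis is comfortably satisfied at every step. After $k$ steps we land in $\hess_n^k$ with $u=u_k$ and $g=g_k$, and summing the $k$ small perturbations (each chosen with budget $\epsilon/(2k)$, say, and measured before conjugating back by $u_\ell^{-1}$, which is isometric) keeps the total below $\epsilon$ pointwise.

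Two points of care. The perturbations are naturally produced on the conjugated side $u_\ell g_\ell u_\ell^*$ but must be recorded as perturbations of $f$; since $u_\ell(x)$ is unitary, conjugating a self-adjoint matrix back gives a self-adjoint matrix of the same norm, so the invariant ``$g_\ell-f$ self-adjoint, norm $<$ running bound'' is maintained. Also the Householder unitary built from $v/\Vert v\Vert$ should be chosen to depend continuously (indeed smoothly) on $v\in\R^{m}\setminus\{0\}$ — the standard formula $I-2ww^*$ with $w$ the unit normal to the hyperplane bisecting $v$ and $\Vert v\Vert e_1$ works, with the mild caveat that one avoids the antipodal singularity by the usual trick of choosing the sign so that $v$ and $\Vert v\Vert e_1$ are not nearly antipodal, or equivalently by applying the reflection in two half-steps; this is where Lemma \ref{avoidinmanifold} with its ``$g=f$ outside a neighborhood'' refinement is convenient, since it lets us assume $v$ stays in a contractible region. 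I expect the main obstacle to be purely bookkeeping: tracking the block structure so that the perturbation at step $\ell+1$ genuinely lies in the ``free'' part of $\hess_n^\ell$ and does not destroy the positivity of the subdiagonal entries created at steps $1,\dots,\ell$; once the block picture is set up cleanly, each individual step is elementary linear algebra plus one invocation of the dimension lemma.
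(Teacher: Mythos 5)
Your proposal follows essentially the same route as the paper's proof: reduce column by column, perturb the below-subdiagonal block of the current column via Lemma \ref{avoidzero} to make it nowhere vanishing, then conjugate by a continuously varying Householder reflection to collapse that block onto the first slot with a positive entry. The dimension count and the bookkeeping of the self-adjoint perturbation budget also match. So there is no gap in the overall architecture.

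The one place where your argument is not yet tight is the handling of the Householder singularity, which is precisely the subtlety that makes the pointwise construction into a continuous one. Of the three fixes you float, the first (``choose the sign so that $v$ and $\Vert v\Vert e_1$ are not nearly antipodal'') does not work here: that sign depends discontinuously on $v$ as the relevant coordinate crosses zero, and moreover a choice of the ``wrong'' sign produces a subdiagonal entry $-\Vert v\Vert$, which violates the positivity built into the definition of $\hess_n^k$. The ``two half-steps'' idea runs into the same topological obstruction as the one-step version: there is no continuous map $S^{m-1}\to \unitaries(m)$ taking each $\beta$ to a unitary carrying $\beta$ to $e_1$. What does work, and is what the paper does, is your third option made precise: normalize $b'$ to get $\beta\in\continuous\bigl(X,S^{2(n-p-1)-1}\bigr)$, then invoke Lemma \ref{avoidinmanifold} to perturb $\beta$ to a map $\beta'$ that avoids the single point $(-1,0,\ldots,0)$. (The ``$g=f$ outside a neighborhood'' clause is not actually needed here; the discrete set $Z=\{(-1,0,\ldots,0)\}$ is all that matters.) Then the Householder vector $h(x)=\beta'(x)+e_1$ is nowhere zero, the reflection about the line $\C h(x)$ depends continuously on $x$, and it carries $b''(x)=r(x)\beta'(x)$ to $r(x)e_1$ with $r(x)>0$, so the new subdiagonal entry is indeed positive. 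Note also that this sphere-avoidance step is the binding constraint for the value of $k$: Lemma \ref{avoidinmanifold} on $S^{2(n-p-1)-1}$ requires $\dim X\le 2(n-p-1)-2$, which at the last step $p=k-1$ becomes the equality case $d\le 2\lceil d/2\rceil$; your estimate $d\le 2(n-\ell-1)-1$ only tracks the Lemma \ref{avoidzero} step and is slightly too loose to see that this is where $k$ is pinned down.
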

\begin{proof}
First notice that it is sufficient to prove that for all $p\in\{0,\ldots,k-1\}$, $f\in\continuous(X,\hess_n^p)$, and $\epsilon\in\continuous(X,\R_{>0})$, there exists $g\in\continuous(X,\hess_n^{p+1})$ and $u\in\continuous(X,\unitaries_n)$ such that for all $x\in X$, $(f-u^*gu)(x)$ is self-adjoint and has norm less than $\epsilon(x)$.\\

Let $b$ be the block inside the $(p+1)$-th column of $f$ starting at the $(p+2)$-th row and ending at the $n$-th row. In other words,
$$b=\left(\begin{matrix}b_{p+2,p+1}\\ \vdots \\ b_{n,p+1}\end{matrix}\right).$$

In the following diagram, we represent the matrix $f\in\mat_n(\continuous(X))$ with the block $b$ framed inside it:
\begin{equation}
\label{block_in_general_hessenberg}
\xymatrix@!@=2pt{
\ast\ar@{-}[rr]\ar@{-}[rrdd] & & \ast\ar@{-}[dd] & \ast\ar@{-}[rrrr]\ar@{-}[ddd] & & & & \ast\ar@{-}[ddddddd] \\
+\ar@{-}[rrdd] & & & & & & & \\
0\ar@{-}[rrdd]\ar@{-}[ddddd] & & \ast & & & & & \\
& & + & \ast\ar@{-}[rrrrdddd] & & & & \\
& & 0\ar@{-}[ddd] & \ast\ar@{-}[ddd] & & & & \\
& & & & \ast\ar@{-}[rrdd]\ar@{-}[dd]& & & \\
& & & & & & & \\
0\ar@{-}[rr] & & 0 & \ast & \ast\ar@{-}[rr] & & \ast & \ast
\save
  "5,4"."8,4"*[F]\frm{}
\restore
}
\end{equation}
In this diagram, the $p$ first columns are represented ending with zeros. Of course, this only is an accurate representation if $p\geq 1$. In the case $p=0$, one should simply imagine that the column with the frame is the first column.\\

Notice that since every coefficient $f_{ij}$ is a function in $\continuous(X)$, the block $b$ may be seen as a continuous map
$$b:X\rightarrow\C^{n-p-1}\simeq\R^{2(n-p-1)}.$$
Since $p\leq k-1\leq n-d/2-2$, we have
\begin{equation}\label{compare_d_p} d\leq 2n-2p-4.\end{equation}
By Lemma \ref{avoidzero} and inequality (\ref{compare_d_p}), there exists a continuous map $b':X\rightarrow\R^{2(n-p-1)}$ such that for all $x\in X$, $\Vert b(x)-b'(x)\Vert<\epsilon(x)/n$ and $b'(x)\neq 0$.\\

For $x\in X$, let $r(x)=\Vert b'(x)\Vert\in\R_{>0}$, and let $\beta(x)=b'(x)/r(x)$. Notice that $b'(x)=r(x)\beta(x)$ and that $\beta(x)$ belongs to the unit sphere $S^{2n-2p-3}$ in $\R^{2(n-p-1)}$. \\

By Lemma \ref{avoidinmanifold} and inequality (\ref{compare_d_p}), there exists a continuous map $\beta':X\rightarrow S^{2n-2p-3}$ such that for all $x\in X$,
$$\Vert\beta'(x)-\beta(x)\Vert<\frac{\epsilon(x)}{n\,r(x)}$$
and
$$\beta'(x)\neq\left(\begin{matrix} -1 \\ 0 \\ \vdots \\ 0 \end{matrix}\right)\in\R^{2n-2p-2}.$$
Now let us see $S^{2n-2p-3}$ as the unit sphere in $\C^{n-p-1}$, so that $\beta'$ is a map into $\C^{n-p-1}$. Define a map $b'':X\rightarrow\C^{n-p-1}$ by letting $b''(x)=r(x)\beta'(x)$. Notice that for all $x\in X$, 
$\Vert b''(x)-b(x)\Vert<\epsilon(x)/n$
and, when seeing $b''$ as a map into $\R^{2(n-p-1)}$,
\begin{equation}\label{avoidhalfline}b''(x)\neq\left(\begin{matrix} \lambda \\ 0 \\ \vdots \\ 0 \end{matrix}\right) \;\;\;\text{for all}\;x\in X\;\text{and}\;\lambda\in\R_{\leq0}.\end{equation}
Now let us construct a self-adjoint matrix field $\delta$ such that adding $\delta$ to $f$ has the effect of replacing the block $b$ by $b''$. This is trivially done by letting, for all $x\in X$,
\begingroup
\renewcommand{\arraystretch}{2}
$$\delta_{ij}(x)=\left\{\begin{array}{ll}
b_{i-p-1}''(x) - b_{i-p-1}(x) & \text{if}\;i\geq p+2\;\text{and}\;j=p+1\\
\overline{b_{j-p-1}''(x) - b_{j-p-1}(x)} & \text{if}\;j\geq p+2\;\text{and}\;i=p+1\\
0 & \text{otherwise}
\end{array} \right.$$
\endgroup
Notice that $\Vert\delta(x)\Vert<\epsilon(x)$. Let
$$f''=f+\delta.$$
It remains to construct a unitary field $u\in\continuous(X,\unitaries_n)$ such that $(uf''u^*)(x)$ is always a matrix in $\hess_n^{p+1}$. We will construct $u$ as a \emph{Householder reflection}, a technique widely used in computational linear algebra. Let us look back at the diagram (\ref{block_in_general_hessenberg}) applied to the matrix $f''$. The block framed there is exactly $b''$. This is a map from $X$ to $\C^{n-p-1}$, and we know by (\ref{avoidhalfline}) that it avoids the closed real half-line generated by $(-1,0,\ldots,0)$. Therefore, we may define a continuous map $h:X\rightarrow\C^{n-p-1}$ by letting
$$h(x)\;=\;\frac{b''(x)}{\Vert b''(x)\Vert}+\left(\begin{matrix}1\\0\\ \vdots\\0\end{matrix} \right)\;\;\;\text{for all}\;x\in X,$$
and we then have $h(x)\neq 0$ for all $x\in X$. The vector $h(x)$ is what is known as a \emph{Householder vector}. Let $u_0(x)\in\unitaries_{n-p-1}$ be the orthogonal reflection around the complex line generated by $h(x)$. We have
$$u_0(x)b''(x)\;=\;\left(\begin{matrix} \Vert b''(x)\Vert \\ 0 \\ \vdots \\ 0 \end{matrix}\right)\;\;\;\text{for all}\;x\in X.$$
Moreover, the map $u_0:X\rightarrow \unitaries_{n-p-1}$ thus defined is continuous. Finally we obtain the wanted unitary field $u\in\continuous(X,\unitaries_n)$ by letting, for all $x\in X$,
$$u(x) = \left(\begin{matrix} 1_{p+1} & 0 \\ 0 & u_0(x)\end{matrix}\right).$$
The wanted matrix field $g$ is then obtained as $g=uf''u^*$.
\end{proof}

Now that we have obtained our general Hessenberg decomposition result for spaces of arbitrary finite dimension, let us sharpen it in some special low-dimensional cases. Here is first a refinement for spaces of dimension at most 3. The main new tool that becomes available in dimensions 3 and less, is the possibility to use continuous \emph{Givens rotations} while in the above general case we could only use Householder reflections. In computational linear algebra, Householder reflections and Givens rotations are the two basic types of unitaries used in almost all unitary reduction processes. For further details on these matters, see \cite{GvL96}.

\begin{proposition}
\label{hessenbergdim3}
Let $n\in\posint$. Let $X$ be a normal space such that $\dim X\leq 3$. For any $f\in\continuous(X,\mat_n)$ and $\epsilon\in\continuous(X,\R_{>0})$, there exists $g\in\continuous(X,\hess_n^{n-2})$ and $u\in\continuous(X,\unitaries_n)$ such that for all $x\in X$, $(f-u^*gu)(x)$ is self-adjoint and has norm less than $\epsilon(x)$.
\end{proposition}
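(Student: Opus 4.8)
The plan is to run the column-by-column Hessenberg reduction exactly as in the proof of Proposition~\ref{hessenbergdefault}, and to gain one extra column --- reaching $\hess_n^{n-2}$ rather than the $\hess_n^{n-3}$ that Proposition~\ref{hessenbergdefault} delivers for $d=3$ --- by treating the last column with a \emph{Givens rotation} in place of a Householder reflection. Concretely, after disposing of the trivial cases $n\leq 2$ (take $g=f$, $u=1_n$), I would reduce, exactly as in Proposition~\ref{hessenbergdefault}, to the one-column step: given $p\in\{0,\ldots,n-3\}$, $f\in\continuous(X,\hess_n^p)$ and $\epsilon\in\continuous(X,\R_{>0})$, produce $g\in\continuous(X,\hess_n^{p+1})$ and $u\in\continuous(X,\unitaries_n)$ with $(f-u^*gu)(x)$ self-adjoint of norm $<\epsilon(x)$. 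Chaining these steps for $p=0,\ldots,n-3$, each applied to $\epsilon/(n-2)$, and composing the resulting unitary fields, yields a field in $\continuous(X,\hess_n^{n-2})$; the cumulative perturbation is a sum of conjugates of small self-adjoint fields, hence self-adjoint of norm $<\epsilon$.

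For $p\in\{0,\ldots,n-4\}$ the one-column step is the one from the proof of Proposition~\ref{hessenbergdefault} specialized to $d=3$, used unchanged: the sub-column block $b$ lives in $\C^{n-p-1}$, which has $2(n-p-1)\geq 6$ real coordinates and satisfies $\dim X\leq 3\leq 2(n-p-1)-2$, so Lemma~\ref{avoidzero} makes $b$ nowhere zero, Lemma~\ref{avoidinmanifold} pushes its normalization off the antipode of $S^{2n-2p-3}$, and a continuous Householder reflection finishes the column, all at the cost of an arbitrarily small self-adjoint perturbation of $f$. The new case is $p=n-3$: here the block $b\in\continuous(X,\C^2)$, consisting of the entries of $f$ in rows $n-1$ and $n$ of column $n-2$, can still be made nowhere zero by Lemma~\ref{avoidzero} since $\dim X\leq 3<4$, at the cost of a small self-adjoint perturbation $\delta$ of $f$ supported on row and column $n-2$, giving $f''=f+\delta\in\continuous(X,\hess_n^{n-3})$ with $(n-2)$-th sub-column block $b'$; but Lemma~\ref{avoidinmanifold} is now useless, since $\dim S^3=3$ is too large, so the Householder step of Proposition~\ref{hessenbergdefault} breaks down. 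To get around this I would use that for every nonzero $v=(v_1,v_2)\in\C^2$ the special unitary
$$u_0(v)=\frac{1}{\Vert v\Vert}\left(\begin{matrix}\overline{v_1} & \overline{v_2}\\ -v_2 & v_1\end{matrix}\right)$$
depends continuously on $v$ and carries $v$ to $(\Vert v\Vert,0)$; then, setting $u(x)=1_{n-2}\oplus u_0(b'(x))\in\continuous(X,\unitaries_n)$ and $g=u f'' u^*$, I would check that conjugation by $u$ leaves columns $1,\ldots,n-3$ untouched (right multiplication by $u^*$ does not affect them, and left multiplication by $u$ only recombines rows $n-1$ and $n$, whose entries in those columns already vanish because $f''\in\hess_n^{n-3}$) and turns the $(n-2)$-th sub-column block into $(\Vert b'\Vert,0)$ with $\Vert b'\Vert>0$, so that $g(x)\in\hess_n^{n-2}$ while $f-u^*gu=-\delta$ is self-adjoint of norm $<\epsilon(x)$.

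I expect the main obstacle --- and the only genuinely new point beyond Proposition~\ref{hessenbergdefault} --- to be exactly the step just sketched: recognizing that the continuous Givens section $v\mapsto u_0(v)$ exists on all of $\C^2\setminus\{0\}$, whereas for $\unitaries_m$ with $m\geq 3$ there is no global continuous choice of a unitary carrying a prescribed nonzero vector to a multiple of the first basis vector, which is precisely why Proposition~\ref{hessenbergdefault} is forced to first perturb the normalized block onto the (contractible) complement of a point of the sphere before applying a Householder reflection. In codimension one the relevant sphere is $S^3$, the first-column map $\mathrm{SU}(2)\to S^3$ is a diffeomorphism, and the hypothesis $\dim X\leq 3$ is exactly what is needed to make the block nowhere zero, so that the inverse of this diffeomorphism can be used directly. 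The remaining bookkeeping --- self-adjointness and norm control under composition, and verification that the Hessenberg pattern is preserved at each step --- is routine and parallels the proof of Proposition~\ref{hessenbergdefault}.
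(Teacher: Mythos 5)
Your proposal is correct and follows essentially the same route as the paper: invoke the Householder-based reduction of Proposition~\ref{hessenbergdefault} to reach $\hess_n^{n-3}$, then exploit $\dim X\leq 3<4=\dim_\R\C^2$ to make the remaining $2\times 1$ block nowhere zero via Lemma~\ref{avoidzero} (by a small self-adjoint perturbation), and finish with the continuous Givens rotation $u_0$ --- the same $2\times 2$ matrix the paper uses. Your closing remark correctly identifies the mechanism: the first-column map $\mathrm{SU}(2)\to S^3$ has a global continuous inverse on all of $\C^2\setminus\{0\}$, which is exactly what fails for larger blocks and forces the antipode-avoidance step in the general case.
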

\begin{proof}
By Theorem \ref{hessenbergdefault}, we can get all what we need except that $g(x)$ is in $\hess_n^{n-3}$ and we want it to be in $\hess_n^{n-2}$. Since $g\in\continuous(X,\hess_n^{n-3})$, it has the form
\[
\xymatrix@!@=2pt{
\ast\ar@{-}[rr]\ar@{-}[rrdd] & & \ast\ar@{-}[dd] & \ast\ar@{-}[dd] & \ast\ar@{-}[dd] & \ast\ar@{-}[dd] \\
+\ar@{-}[rrdd] & & & & & \\
0\ar@{-}[rrdd]\ar@{-}[ddd] & & \ast & \ast & \ast & \ast\\
& & + & \ast & \ast & \ast\\
& & 0 & \ast & \ast & \ast \\
0\ar@{-}[rr] & & 0 & \ast & \ast & \ast
\save
  "5,4"."6,4"*[F]\frm{}
\restore
}
\]
where ``+'' again denotes a coefficient that is in $\R_{>0}$ for all $x\in X$. In the above diagram we framed the $2\times 1$ block consisting of $g_{n-1,n-2}$ and $g_{n,n-2}$. Call it $b$. Thus $b$ is a continuous map from $X$ to $\C^2$. Since $\dim X\leq 3<\dim\C^2$, it follows from Lemma \ref{avoidzero} that an arbitrarily small perturbation on the matrix field $g$ ensures that $b(x)\neq 0$ for all $x\in X$. This perturbation can be made to be self-adjoint as in the proof of Theorem \ref{hessenbergdefault}: just apply the adjoint perturbation. So we assume without loss of generality that $b(x)\neq 0$ for all $x\in X$. Let $r(x)=\Vert b(x)\Vert$. Define a $2\times 2$ unitary field $u_0$ by letting
$$u_0(x)=\frac{1}{r(x)}\,\left(\begin{matrix} \overline {b_1(x)}  &  \overline {b_2(x)} \\ -b_2(x) & b_1(x) \end{matrix}\right).$$
Notice that for all $x\in X$ we have
$$u_0(x)b(x)=\left(\begin{matrix} r(x) \\ 0 \end{matrix}\right).$$
It follows that if we define a $n\times n$ unitary field $u$, usually called a \emph{Givens rotation}, by letting
$$u(x) = \left(\begin{matrix} 1_{n-2} & 0 \\ 0 & u_0(x)\end{matrix}\right),$$
then the matrix field $ugu^*$ has the wanted form
\[
\xymatrix@!@=2pt{
\ast\ar@{-}[rr]\ar@{-}[rrdd] & & \ast\ar@{-}[dd] & \ast\ar@{-}[dd] & \ast\ar@{-}[dd] & \ast\ar@{-}[dd] \\
+\ar@{-}[rrdd] & & & & & \\
0\ar@{-}[rrdd]\ar@{-}[ddd] & & \ast & \ast & \ast & \ast\\
& & + & \ast & \ast & \ast\\
& & 0 & + & \ast & \ast \\
0\ar@{-}[rr] & & 0 & 0 & \ast & \ast
\save
  "5,4"."6,4"*[F]\frm{}
\restore
}
\]
\end{proof}

Our next Hessenberg decomposition result is for the special case of spaces of dimension at most 1. Here, we can easily obtain the actual \emph{unreduced} Hessenberg form, \ie our matrices are Hessenberg at every point and the subdiagonal coefficients do not vanish.

\begin{proposition}
\label{hessenbergdim1}
Let $n\in\posint$. Let $X$ be a normal space such that $\dim X\leq 1$. For any $f\in\continuous(X,\mat_n)$ and $\epsilon\in\continuous(X,\R_{>0})$, there exists $g\in\continuous(X,\hess_n^n)$ and $u\in\continuous(X,\unitaries_n)$ such that for all $x\in X$, $(f-u^*gu)(x)$ is self-adjoint and has norm less than $\epsilon(x)$.
\end{proposition}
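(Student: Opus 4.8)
The plan is to iterate the one-column improvement, exactly as in the proof of Proposition \ref{hessenbergdefault}, but now exploiting that $\dim X\leq 1$ is small enough to clear \emph{every} remaining subdiagonal block, all the way down to a single scalar. More precisely, it suffices to show: for every $p\in\{0,\ldots,n-1\}$, every $f\in\continuous(X,\hess_n^p)$ and every $\epsilon\in\continuous(X,\R_{>0})$, there exist $g\in\continuous(X,\hess_n^{p+1})$ and $u\in\continuous(X,\unitaries_n)$ such that $(f-u^*gu)(x)$ is self-adjoint of norm $<\epsilon(x)$ for all $x$. Composing these $n$ steps (summing the $\epsilon$'s geometrically, as usual) lands us in $\hess_n^n$. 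The point is that in Proposition \ref{hessenbergdefault} the induction was only carried out for $p\leq k-1=n-\lceil d/2\rceil-2$, because the ambient dimension bound $d\leq 2n-2p-4$ was needed to apply Lemmas \ref{avoidzero} and \ref{avoidinmanifold}; here $d\leq 1$, so $2(n-p-1)\geq 2$ gives what we need for \emph{all} $p\leq n-1$, including the last column block, which is a map $X\to\C^{n-p-1}=\C$ whose sphere $S^1$ has dimension $1\geq\dim X$ as well — the borderline case that still works.

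Concretely, for a fixed $p$, let $b\colon X\to\C^{n-p-1}\simeq\R^{2(n-p-1)}$ be the block below the subdiagonal in the $(p+1)$-th column, as in diagram (\ref{block_in_general_hessenberg}). Since $2(n-p-1)\geq 2>1\geq\dim X$, Lemma \ref{avoidzero} produces a small perturbation $b'$ with $b'(x)\neq 0$ everywhere; writing $b'(x)=r(x)\beta(x)$ with $\beta(x)\in S^{2n-2p-3}$ and using that $\dim S^{2n-2p-3}=2n-2p-3\geq 1\geq\dim X$, Lemma \ref{avoidinmanifold} gives a further small perturbation $\beta'$ avoiding the point $(-1,0,\ldots,0)$, so that $b''(x)=r(x)\beta'(x)$ avoids the closed negative real half-line $\{(\lambda,0,\ldots,0):\lambda\leq 0\}$. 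Add the self-adjoint correction $\delta$ that replaces $b$ by $b''$ (controlled in norm by $\epsilon$), set $f''=f+\delta$, and form the Householder reflection $u_0(x)\in\unitaries_{n-p-1}$ around the line spanned by the Householder vector $h(x)=b''(x)/\Vert b''(x)\Vert+(1,0,\ldots,0)^{t}$, which is nonzero and continuous precisely because $b''$ avoids that half-line. Then $u_0(x)b''(x)=(\Vert b''(x)\Vert,0,\ldots,0)^{t}$ with positive leading entry, and $u(x)=1_{p+1}\oplus u_0(x)$ conjugates $f''$ into $\hess_n^{p+1}$. This is verbatim the argument already written out in Proposition \ref{hessenbergdefault}; the only thing being checked is that the dimension inequalities it relies on persist for every column when $d\leq 1$.

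One extra point to note is the terminal step $p=n-2\to p=n-1$: here the block $b$ lies in $\C^{1}$, i.e. the single coefficient $f''_{n,n-1}$, and the ``sphere'' is $S^1$. Lemma \ref{avoidzero} (with $n+1=2$, since $\dim X\leq 1$) makes this coefficient nonvanishing, and Lemma \ref{avoidinmanifold} applied on $S^1$ makes it avoid the negative reals; the $1\times1$ Householder step then rotates it to be real and positive, so the resulting matrix is genuinely unreduced Hessenberg. I do not expect a real obstacle here: the whole content is that the estimate $2(n-p-1)\geq 2$ and the sphere-dimension estimate $2(n-p-1)-1\geq 1$ hold for all $p\in\{0,\ldots,n-1\}$ when $\dim X\leq 1$, which is immediate, so the same machinery that proved Proposition \ref{hessenbergdefault} simply runs one more step than before. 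The mild bookkeeping of keeping the cumulative perturbation self-adjoint and of norm $<\epsilon$ is handled exactly as in that proof by replacing $\epsilon$ with $\epsilon/2^{j}$ at the $j$-th stage.
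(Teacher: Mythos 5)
Your plan runs the one-column Householder step of Proposition \ref{hessenbergdefault} all the way to $p=n-1$, but the last step breaks down, and for two distinct reasons.

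First, you misstate the hypothesis of Lemma \ref{avoidinmanifold}. For a manifold $Y$ of dimension $m$, that lemma requires $\dim X\leq m-1$, strictly. At the step $p=n-2$, the block $b$ maps into $\C^{1}$ and the relevant sphere is $S^{1}$, so the lemma would need $\dim X\leq 0$; your ``borderline case'' $\dim X=1=\dim S^{1}$ does \emph{not} work. (It genuinely cannot: take $X=S^{1}$ and $f$ the identity map --- no small perturbation of it can avoid a given point.) The inequality you actually verified, $2(n-p-1)-1\geq 1$, is the hypothesis for Lemma \ref{avoidzero}, not for Lemma \ref{avoidinmanifold}; the stricter bound $d\leq 2n-2p-4$ from equation (\ref{compare_d_p}) is what the Householder construction really uses, and at $p=n-2$ it forces $d\leq 0$.

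Second, even if one could make $b''(x)$ avoid the negative real half-line, the ``$1\times1$ Householder step'' does not do what you want. In $\C^{1}$, the Householder reflection $1-2hh^{*}/\Vert h\Vert^{2}$ around any nonzero $h$ is just $-1$; it cannot send a complex scalar to its modulus. The operation you need in the last column is a phase rotation, not a reflection.

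The paper handles the last column exactly this way: apply Proposition \ref{hessenbergdim3} to land in $\hess_n^{n-2}$, use Lemma \ref{avoidzero} (which only needs $\dim X\leq 1$ when the target is $\C\cong\R^{2}$) to make $g_{n,n-1}$ nonvanishing, and then conjugate by the diagonal unitary $\diag(1,\ldots,1,\zeta)$ with $\zeta=\vert g_{n,n-1}\vert/g_{n,n-1}$. This $\zeta$ is continuous precisely because $g_{n,n-1}$ never vanishes, and it rotates the entry to $\vert g_{n,n-1}\vert>0$ without any sphere-avoidance step. Your argument for all earlier columns ($p\leq n-3$) is fine; it is only the terminal column where the Householder machinery must be replaced by a scalar phase rotation, and your proposal does not supply that ingredient.
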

\begin{proof}
By Theorem \ref{hessenbergdim3}, we can get all what we need except that $g(x)$ is in $\hess_n^{n-2}$ and we want it to be in $\hess_n^n$. Since $g\in\continuous(X,\hess_n^{n-2})$, it has the form
\[
\xymatrix@!@=2pt{
\ast\ar@{-}[rr]\ar@{-}[rrdd] & & \ast\ar@{-}[dd] & \ast\ar@{-}[dd] & \ast\ar@{-}[dd] & \ast\ar@{-}[dd] \\
+\ar@{-}[rrdd] & & & & & \\
0\ar@{-}[rrdd]\ar@{-}[ddd] & & \ast & \ast & \ast & \ast\\
& & + & \ast & \ast & \ast\\
& & 0 & + & \ast & \ast \\
0\ar@{-}[rr] & & 0 & 0 & \ast & \ast
\save
  "6,5"."6,5"*[F]\frm{}
\restore
}
\]
where ``+'' again denotes a coefficient that is in $\R_{>0}$ for all $x\in X$. In the above diagram we framed the $1\times 1$ block consisting of $g_{n,n-1}$. All what we need is to make it positive (nonzero). Since $\dim X\leq 1<\dim\C$, it follows from Lemma \ref{avoidzero} that an arbitrarily small perturbation of the matrix field $g$ will ensure that $g_{n,n-1}(x)\neq 0$ for all $x\in X$. As in the proof of Theorem \ref{hessenbergdefault}, this perturbation can be chosen to be self-adjoint just by applying the adjoint perturbation. So we may assume without loss of generality that $g_{n,n-1}(x)\neq 0$ for all $x\in X$. Now let
$$\zeta(x)=\frac{\vert g_{n,n-1}(x)\vert}{g_{n,n-1}(x)}$$
and define a $n\times n$ unitary field $u$ by letting $u(x)=\diag(1,\ldots,1,\zeta(x))$. It is then obvious that $ugu^*$ has the wanted form.
\end{proof}

\section{Application: general structure results for self-adjoint matrix fields}

\begin{theorem}
\label{struc}
Let $d\in\nnegint$. Let $X$ be a normal space of dimension $d$. Let $c=\lceil d/2\rceil+1$. Let $n\in\nnegint$. Assume that $n>c$. Let $k=n-c$. Let $\Pi_n^1$ be the set of rank one (orthogonal) projections in $\mat_n$. Let $f\in\continuous(X,\mat_n^\sa)$. Let $\epsilon\in\continuous(X,\R_{>0})$. It follows that exists $u\in\continuous(X,\unitaries_n)$, $g,r\ldots,p_k\in\continuous(X,\mat_n^\sa)$, $p_1,\ldots,p_k\in\continuous(X,\Pi_n^1)$ and $\lambda_1,\ldots,\lambda_k\in\continuous(X,\R)$ such that the following conditions are satisfied:
\begin{itemize}
\item $\Vert (ufu^*)(x) - g(x) \Vert < \epsilon(x)$ for all $x\in X$;
\item $g = \lambda_1p_1 + \cdots + \lambda_kp_k + r + q$;
\item The projections $p_i$ are mutually orthogonal, and are orthogonal to $r$;
\item The map $r$ is zero outside of the bottom-right $c\times c$ corner: in other words, there exists a map $r_c\in\continuous(X,\mat_c^\sa)$ such that $r=0_{n-c}\oplus r_c$;
\item $q$ may be chosen:
 \begin{itemize}
  \item Either so that for all $x\in X$, $q(x)$ has rank exactly 1 and is positive, and has norm at most $\sqrt2(\Vert f(x)\Vert+\epsilon(x))$;
  \item Or so that for all $x\in X$, $q(x)$ has rank exactly 1, is negative, and has norm at most $\sqrt2(\Vert f(x)\Vert+\epsilon(x))$, and so that if $f$ is positive then $r,p_1,\ldots,p_k$ are positive too;
  \item Or so that for all $x\in X$, $q(x)$ has rank exactly 2, trace $0$, and norm at most $\Vert f(x)\Vert+\epsilon(x)$, and so that if $f$ is positive then $r,p_1,\ldots,p_k$ are positive too;
 \end{itemize}
\item In either case, the spectral projections of $q$ are trivial;
\item $p_1+\cdots+p_k$ is trivial too (actually it is constant equal to $1_k\oplus 0_c$);
\item $\lambda_1(x)>\ldots>\lambda_k(x)$ for all $x\in X$.
\end{itemize}
\end{theorem}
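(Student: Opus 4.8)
The plan is to feed $f$ into the Hessenberg decomposition of Theorem~\ref{hessenbergsummary} and then read the asserted structure straight off the block shape of a self-adjoint matrix lying in $\hess_n^{n-c}$; the rank one perturbation is needed only to disentangle the single subdiagonal entry joining the two blocks.

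First I would apply Theorem~\ref{hessenbergsummary} to $f$ with error $\tfrac{\epsilon}{3}$. As $f$ is self-adjoint so is the returned perturbation $g_1$, hence $m:=ug_1u^*$ is a self-adjoint matrix field taking values in $\hess_n^{n-c}$ with $\Vert ufu^*-m\Vert=\Vert f-g_1\Vert<\tfrac{\epsilon}{3}$. Writing $k=n-c$, self-adjointness forces $m$ to vanish above the superdiagonal in its first $k$ rows as well, so
$$m\ =\ T\oplus R_c\ +\ q_0,$$
where $T\in\mat_k$ is an \emph{unreduced Jacobi matrix} (tridiagonal, real, with strictly positive subdiagonal), $R_c\in\mat_c^{\sa}$ is arbitrary, and the coupling term $q_0$ has as its only nonzero entries the number $b:=m_{k+1,k}>0$, in positions $(k,k+1)$ and $(k+1,k)$; thus $q_0$ has rank $2$, trace $0$, norm $b$, and \emph{constant} spectral projections, namely the projections onto $\tfrac1{\sqrt2}(e_k\pm e_{k+1})$. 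Classically an unreduced Jacobi matrix has simple spectrum ($e_1$ is a cyclic vector for it), so the eigenvalues $\lambda_1>\cdots>\lambda_k$ of $T$ and the corresponding rank one spectral projections $p_1,\dots,p_k$, extended by $0$ on the last $c$ coordinates, depend continuously on $m$ and hence on $x$; I set $r:=0_{n-c}\oplus R_c$. By construction the $p_i$ are mutually orthogonal, are orthogonal to $r$, and $p_1+\cdots+p_k=1_{n-c}\oplus 0_c$ is constant. Finally, $T$ and $R_c$ are principal submatrices of $m$, so by interlacing each is $\geq -\tfrac{\epsilon}{3}1$ whenever $f\geq 0$; in that case one replaces $T,R_c$ by $T+\tfrac{\epsilon}{3}1,\,R_c+\tfrac{\epsilon}{3}1$ (the first stays unreduced Jacobi), absorbs the extra $\tfrac{\epsilon}{3}1_n$ into the error budget, and thereby arranges $T\geq 0$, $R_c\geq 0$, so that $\lambda_i\geq 0$ and $r\geq 0$; the $p_i$ are projections.

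Now I extract $q$. For the rank $2$ version take $q:=q_0$, so that $g:=m$ (or $m+\tfrac{\epsilon}{3}1_n$ in the positive case) equals $\lambda_1p_1+\cdots+\lambda_kp_k+r+q$, with $\Vert ufu^*-g\Vert<\epsilon$, $q$ of rank $2$, trace $0$, norm $b\leq\Vert m\Vert<\Vert f\Vert+\epsilon$, and constant (hence trivial) spectral projections. For the rank $1$ versions use
$$q_0\ =\ 2bP_+-b\,(e_ke_k^*+e_{k+1}e_{k+1}^*)\ =\ -2bP_-+b\,(e_ke_k^*+e_{k+1}e_{k+1}^*),$$
where $P_\pm$ is the rank one projection onto $\tfrac1{\sqrt2}(e_k\pm e_{k+1})$: absorb the correction $\mp b\,e_ke_k^*$ into the last diagonal entry of $T$ (leaving $T$ unreduced Jacobi, and, for the sign giving $P_-$, still positive) and $\mp b\,e_{k+1}e_{k+1}^*$ into $R_c$, and take $q:=\pm 2bP_\pm$, of rank $1$ and positive (resp.\ negative), with $\Vert q\Vert=2b$ and constant spectral projection $P_\pm$; again $g=m$ (or $m+\tfrac{\epsilon}{3}1_n$) and $\lambda_1>\cdots>\lambda_k$.

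The single point that needs real work --- and what I expect to be the main obstacle --- is the bound $\Vert q\Vert\leq\sqrt2(\Vert f\Vert+\epsilon)$ in the rank $1$ cases, i.e.\ $b\leq\tfrac1{\sqrt2}(\Vert f\Vert+\epsilon)$: the reduction as stated only gives $b\leq\Vert m\Vert\leq\Vert f\Vert+\epsilon$, and since the $(k,k+1)$ entry of $g$ must match that of $m$ while $2\vert q'_{k,k+1}\vert\leq\Vert q'\Vert$ for any positive rank one $q'$, no such $q$ can have norm below $\approx 2b$; so the coupling $b$ itself has to be forced small. Interlacing on the principal $2\times2$ submatrix $\bigl(\begin{smallmatrix}m_{kk}&b\\ b&m_{k+1,k+1}\end{smallmatrix}\bigr)$ already improves this to $b\leq\tfrac12(\lambda_{\max}(m)-\lambda_{\min}(m))$; the missing factor $\tfrac1{\sqrt2}$ should be recovered by redoing the reduction near the last block boundary --- rather than using the coordinate splitting $\{1,\dots,k\}$ versus $\{k+1,\dots,n\}$, replace the coordinate $e_k$ in the $k$-block by a unit vector $\xi\in\mathrm{span}(e_k,e_{k+1})$ that is an eigenvector of that $2\times2$ block (hence, after one further reduction step, an eigenvector of $m$), chosen with $\vert\langle e_k,\xi\rangle\vert\geq\tfrac1{\sqrt2}$: then the $k$-block still has a cyclic vector, so simple spectrum, the new coupling stays of rank one, and its norm is $\beta_{k-1}\vert\langle e_k,\xi^\perp\rangle\vert\leq\tfrac1{\sqrt2}\beta_{k-1}\leq\tfrac1{\sqrt2}\Vert m\Vert$, where $\beta_{k-1}$ is the subdiagonal entry just above $b$, bounded by interlacing one row earlier. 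Organising this continuously over $X$ --- choosing $\xi$ continuously despite $\vert\langle e_k,\xi\rangle\vert$ possibly hovering near $\tfrac1{\sqrt2}$, and keeping track of (and bounding the rank of) the fill-in the replacement creates --- is, in the spirit of the Givens-rotation step of Proposition~\ref{hessenbergdim3} and via Lemmas~\ref{avoidzero} and \ref{avoidinmanifold}, where essentially all of the remaining technical effort lies.
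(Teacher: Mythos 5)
Your construction is, step for step, the same as the paper's: apply the Hessenberg reduction to land a self-adjoint field in $\hess_n^{n-c}$, write it as (Jacobi $k\times k$ block)$\oplus$($c\times c$ corner) plus the single coupling $\mu=g_{k+1,k}$ sitting in positions $(k,k{+}1),(k{+}1,k)$, peel that coupling off as $q$, and read off the $\lambda_i,p_i$ from the unreduced Jacobi block via Proposition~\ref{maxeigmulhess}. Your three candidate $q$'s
\[
\left(\begin{smallmatrix}0&\mu\\\mu&0\end{smallmatrix}\right),\quad
2\mu P_+ = \left(\begin{smallmatrix}\mu&\mu\\\mu&\mu\end{smallmatrix}\right),\quad
-2\mu P_- = \left(\begin{smallmatrix}-\mu&\mu\\\mu&-\mu\end{smallmatrix}\right)
\]
are exactly the ones the paper writes down, and your diagonal absorption $q_0 = \pm 2\mu P_\pm \mp \mu(e_ke_k^*+e_{k+1}e_{k+1}^*)$ is exactly how the paper's $g-q$ arises. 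Your extra care with positivity (reserving a slice of the error budget to shift by a multiple of the identity, using interlacing on the principal blocks) is a correct way to fill in a step the paper dismisses with ``the remaining conditions are then easily shown to be satisfied.''

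The one place you depart is where you spend most of your effort: the $\sqrt2(\|f\|+\epsilon)$ bound on $\|q\|$ in the rank-one cases. You are right that the direct construction does not yield it. Any rank-one positive $q$ with $|q_{k+1,k}|=\mu$ has $\|q\|\geq 2\mu$, and the only a priori control is $\mu\leq\|g\|<\|f\|+\epsilon$; taking $g$ with a $\left(\begin{smallmatrix}0&1\\1&0\end{smallmatrix}\right)$ block at positions $k,k{+}1$ and zeros elsewhere gives $\mu=\|g\|$, so nothing forces $\mu\leq\tfrac1{\sqrt2}\|g\|$. The paper's proof uses the very same $q$ and does not verify the constant, so this looks like a slip in the statement: the constant actually delivered by this argument is $2(\|f\|+\epsilon)$, not $\sqrt2(\|f\|+\epsilon)$. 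Given that, your second-half programme — replacing $e_k$ by an eigenvector $\xi$ of the $2\times2$ block, demanding $|\langle e_k,\xi\rangle|\geq\tfrac1{\sqrt2}$, worrying about continuity of $\xi$ near degenerate blocks and about the fill-in — is aimed at a target the paper itself does not hit, and as written it is a sketch rather than a proof: you explicitly leave open the continuity of the eigenvector choice (which genuinely fails in general when the $2\times2$ block crosses a multiple of the identity) and the rank of the fill-in the new splitting produces. The honest resolution is to state the rank-one bounds with constant $2$; with that change both your argument and the paper's are complete, and the long speculative paragraph becomes unnecessary.
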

\begin{remark}
Notice that while $r,p_1,\ldots,p_k$ are mutually orthogonal, $q$ is not necessarily orthogonal to any of them. Also notice that when $d\leq 2$, there is the stronger result that the eigenvalues can be completely separated, see Theorem \ref{separationdim2}, which provides a splitting into an orthogonal sum of multiples of rank one projections. Finally, notice that when $d=3$, Theorem \ref{hessenbergdim3} allows to obtain $c=2$ instead of $c=3$, which is a slight improvement.
\end{remark}
\begin{proof}
Apply Theorem \ref{hessenbergdefault} to obtain $u\in\continuous(X,\unitaries_n)$ and $g\in\continuous(X,\mat_n^\sa)$ such that for all $x\in X$, $\Vert (ufu^*)(x) - g(x) \Vert < \epsilon(x)$ and $g(x)\in\hess_n^{n-c}$. It remains to split $g$ as advertised. Since $g(x)$ belongs to $\hess_n^{n-c}$ and is self-adjoint, it has the following form:
\[
\xymatrix@!@=2pt{
\ast\ar@{-}[rrrddd] & +\ar@{-}[rrdd] & 0\ar@{-}[rrrddd]\ar@{-}[rrrrr] & & & & & 0\ar@{-}[ddd] \\
+\ar@{-}[rrdd] & & & & & & & \\
0\ar@{-}[rrrddd]\ar@{-}[ddddd] & & & + & & & & \\
& & + & \ast & + & 0\ar@{-}[rr] & & 0 \\
& & & + & \ast\ar@{-}[ddd]\ar@{-}[rrr] & & &\ast\ar@{-}[ddd] \\
& & & 0\ar@{-}[dd] & & & & \\
& & & & & & & \\
0\ar@{-}[rrr] & & & 0 & \ast\ar@{-}[rrr] & & & \ast
\save
  "5,4"."5,4"*[F]\frm{}
\restore
\save
  "4,5"."4,5"*[F]\frm{}
\restore
}
\]
The two framed coefficients $g_{k+1,k}(x)$ and $g_{k,k+1}(x)$ are of course equal: call them $\mu(x)$. In the case where we want $q$ of constant rank $2$, let
$$ q = 0_{k-1} \oplus \left(\begin{matrix} 0 & \mu \\ \mu & 0 \end{matrix}\right) \oplus 0_{c-1}.$$
In the case where we want $q$ of constant rank 1 and positive, let
$$ q = 0_{k-1} \oplus \left(\begin{matrix} \mu & \mu \\ \mu & \mu \end{matrix}\right) \oplus 0_{c-1}.$$
In the last case where we want $q$ of constant rank 1 and negative, let
$$ q = 0_{k-1} \oplus \left(\begin{matrix} -\mu & \mu \\ \mu & -\mu \end{matrix}\right) \oplus 0_{c-1}.$$
Either way, $g-q$ has this form:
\[
\xymatrix@!@=2pt{
\ast\ar@{-}[rrrddd] & +\ar@{-}[rrdd] & 0\ar@{-}[rrrddd]\ar@{-}[rrrrr] & & & & & 0\ar@{-}[ddd] \\
+\ar@{-}[rrdd] & & & & & & & \\
0\ar@{-}[rrrddd]\ar@{-}[ddddd] & & & + & & & & \\
& & + & \ast & 0 & 0\ar@{-}[rr] & & 0 \\
& & & 0 & \ast\ar@{-}[ddd]\ar@{-}[rrr] & & &\ast\ar@{-}[ddd] \\
& & & 0\ar@{-}[dd] & & & & \\
& & & & & & & \\
0\ar@{-}[rrr] & & & 0 & \ast\ar@{-}[rrr] & & & \ast
\save
  "5,4"."5,4"*[F]\frm{}
\restore
\save
  "4,5"."4,5"*[F]\frm{}
\restore
}
\]
Let now $h$ be the top-left $k\times k$ block and $r$ be the bottom-right $c\times c$ block in $g-q$, as illustrated on this diagram:
\[
\xymatrix@!@=2pt{
\ast\ar@{-}[rrrddd] & +\ar@{-}[rrdd] & 0\ar@{-}[rrrddd]\ar@{-}[rrrrr] & & & & & 0\ar@{-}[ddd] \\
+\ar@{-}[rrdd] & & & & & & & \\
0\ar@{-}[rrrddd]\ar@{-}[ddddd] & & & + & & & & \\
& & + & \ast & 0 & 0\ar@{-}[rr] & & 0 \\
& & & 0 & \ast\ar@{-}[ddd]\ar@{-}[rrr] & & &\ast\ar@{-}[ddd] \\
& & & 0\ar@{-}[dd] & & & & \\
& & & & & & & \\
0\ar@{-}[rrr] & & & 0 & \ast\ar@{-}[rrr] & & & \ast
\save
  "1,1"."4,4"*[F]\frm{}
\restore
\save
  "5,5"."8,8"*[F]\frm{}
\restore
}
\]
Implicitly we extend $r$ by zero to make it a $n\times n$ matrix. Notice that $h(x)\in\hess_{k}^{k}$ and therefore, by Proposition \ref{maxeigmulhess}, $h(x)$ has $k$ distinct eigenvalues for all $x\in X$. Call them $\lambda_1(x)>\ldots>\lambda_k(x)$ and let $p_i(x)$ be the spectral projection of $h(x)$ corresponding to $\lambda_i(x)$, again implicitly extended by zero to make it a $n\times n$ matrix. This completes the construction, and the remaining conditions are then easily shown to be satisfied.
\end{proof}

Following a different approach, one might ask if other structure results become available when the base space $X$ has low enough dimension. For spaces of dimension up to 2, we will see below that eigenvalues can be completely separated: see Theorem \ref{separationdim2}. This shows that the self-adjoint field then splits as an orthogonal sum of multiples of rank one projections, which is about the strongest structure result that one might hope for. However, already in dimension 3 this isn't possible anymore, as we will discuss in Section \ref{section_separation}. Hence, the following easy corollary of Theorem \ref{hessenbergdim3} is perhaps worth mentioning:
\begin{corollary}
\label{strucdim3}
Let $X$ be a normal space of dimension at most 3. Let $f\in\continuous(X,\mat_n^\sa)$. Let $\epsilon\in\continuous(X,\R_{>0})$. There exists $g\in\continuous(X,\mat_n^\sa)$ and $u\in\continuous(X,\unitaries_n)$ such that we have $\Vert (ufu^*)(x) - g(x)\Vert < \epsilon(x)$ for all $x\in X$, and there exists an open subset $U$ of $X$ such that:
\begin{itemize}
\item For all $x\in U$, $g(x)$ has $n$ distinct eigenvalues
\item For all $x\in X-U$, $g(x)$ has the block-diagonal form
$$g(x)=\left(\begin{matrix}g'(x)& 0 \\ 0 & \lambda(x)\end{matrix}\right)$$
where $g'(x)$ is a $(n-1)\times(n-1)$ has $n-1$ distinct eigenvalues, and $\lambda(x)\in\R$.
\end{itemize}
\end{corollary}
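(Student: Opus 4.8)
The plan is to feed $f$ into Proposition~\ref{hessenbergdim3} and then simply read off the statement from the shape of a self-adjoint matrix lying in $\hess_n^{n-2}$. We may assume $n\geq 2$, since for $n\leq 1$ one takes $u=\id$, $g=f$, $U=X$. Proposition~\ref{hessenbergdim3} produces $u\in\continuous(X,\unitaries_n)$ and $g\in\continuous(X,\mat_n)$ such that, for every $x\in X$, $(f-u^*gu)(x)$ is self-adjoint of norm $<\epsilon(x)$ and $g(x)\in\hess_n^{n-2}$. Since $f(x)$ is self-adjoint, so is $(u^*gu)(x)=f(x)-(f-u^*gu)(x)$, hence so is $g(x)$; and conjugating by the unitary $u(x)$ gives $\Vert(ufu^*)(x)-g(x)\Vert=\Vert(f-u^*gu)(x)\Vert<\epsilon(x)$. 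So $g$ already fulfils all the requirements except that we still owe the open set $U$.

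The key structural observation is that if $m\in\mat_n^\sa$ lies in $\hess_n^{n-2}$, then $m$ is tridiagonal, with $m_{i+1,i}\in\R_{>0}$ for $i=1,\dots,n-2$, while the corner entry $m_{n,n-1}$ is an arbitrary complex number (and $m_{n-1,n}=\overline{m_{n,n-1}}$). Indeed Definition~\ref{hess} forces $m_{ij}=0$ whenever $j\leq n-2$ and $i\geq j+2$, and, applying this to $\overline{m_{ji}}=m_{ij}$, also whenever $i\leq n-2$ and $j\geq i+2$; the only way to have $|i-j|\geq 2$ and escape both conditions is $i,j\geq n-1$, which is impossible. Accordingly, set
$$U=\{x\in X:\ g_{n,n-1}(x)\neq 0\},$$
which is open because the coefficient $g_{n,n-1}$ is continuous.

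For $x\in U$ the tridiagonal matrix $g(x)$ has all $n-1$ of its off-diagonal entries nonzero, so conjugating it by the diagonal unitary $\diag\big(1,\dots,1,\overline{g_{n,n-1}(x)}/|g_{n,n-1}(x)|\big)$ yields a matrix of $\hess_n^{n-1}$, which by Proposition~\ref{maxeigmulhess} (taken with $k=n-1$, so $p=1$) has $n$ distinct eigenvalues; distinctness of eigenvalues being a unitary invariant, $g(x)$ has $n$ distinct eigenvalues. For $x\notin U$ we have $g_{n,n-1}(x)=g_{n-1,n}(x)=0$, so $g(x)=g'(x)\oplus\big(\lambda(x)\big)$ where $\lambda(x)=g_{n,n}(x)\in\R$ and $g'(x)$ is the self-adjoint top-left $(n-1)\times(n-1)$ block; the latter lies in $\hess_{n-1}^{(n-1)-1}$, so Proposition~\ref{maxeigmulhess} bounds the multiplicities of its eigenvalues by $1$, and being self-adjoint it has $n-1$ distinct eigenvalues. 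The only (mild) subtlety is this last-column rotation on $U$: applying Proposition~\ref{maxeigmulhess} to $g(x)\in\hess_n^{n-2}$ directly would bound multiplicities only by $2$, so one genuinely uses that on $U$ the single non-positive subdiagonal coefficient can be moved into $\R_{>0}$ — which is harmless since on $U$ we need only a pointwise, not a continuous, eigenvalue statement. Everything else is bookkeeping with the zero-patterns of Definition~\ref{hess}.
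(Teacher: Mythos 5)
Your proof is correct and follows essentially the same route as the paper: apply Proposition~\ref{hessenbergdim3}, set $U=\{x\in X:\ g_{n,n-1}(x)\neq 0\}$, and read off the eigenvalue counts from Proposition~\ref{maxeigmulhess}. The paper's one-line proof glosses over the pointwise phase rotation you carry out on $U$ to pass from $\hess_n^{n-2}$ to $\hess_n^{n-1}$; that step (or an equivalent appeal to the \emph{proof} rather than the statement of Proposition~\ref{maxeigmulhess}, as the paper does elsewhere in Theorem~\ref{separationdim2}) is indeed needed, and you supply it correctly.
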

\begin{proof}
Apply Theorem \ref{hessenbergdim3} and construct $U$ as the set of all $x\in X$ such that $g_{n,n-1}(x)\neq 0$. The statements on distinct eigenvalues follow from Proposition \ref{maxeigmulhess}.
\end{proof}

\section{Application: separation of eigenvalues of self-adjoint matrix fields}
\label{section_separation}

By separation of eigenvalues, we mean results showing that, given a matrix field, by applying a small perturbation to it, one can ensure that at every point the matrices do not have eigenvalues of too high multiplicity, and do not have too many eigenvalues of multiplicity more than 1.\\

We do not claim that the approach presented in this section is the best possible, but it is a quick application of our Hessenberg decomposition results, and it does give the strongest possible conclusion at least for self-adjoint fields over base spaces of dimension up to 4. To our knowledge, these results are not to be found in preexisting literature, although the following results should be mentioned. First, Choi and Elliott \cite[Theorem 1]{CE90} give the special case of compact metrizable spaces of dimension up to 2. Second, Phillips \cite[Proof of Theorem 3.3]{P94} gives a different but related result using another method that is probably better and which we believe could also be used to rederive and generalize all the results that we present in this section. That however would require one to first establish some more advanced variants of the dimension-theoretic lemmas in section \ref{dim} of the present article.\\

Here is a generic eigenvalue separation result, as a straightforward corollary of our Hessenberg decomposition. Below we will also prove finer results for spaces of low dimension.
\begin{theorem}
\label{separationdefault}
Let $0\leq d<\infty$. Let $X$ be a normal space such that $\dim X=d$. Let $n\geq 1$. Let $f\in\continuous(X,\mat_n^\sa)$. Let $\epsilon\in\continuous(X,\R_{>0})$. There exists $g\in\continuous(X,\mat_n^\sa)$ such that for all $x\in X$,
\begin{itemize}
\item $\Vert f(x)-g(x)\Vert <\epsilon(x)$,
\item all eigenvalues of $g(x)$ have multiplicity at most $\lceil d/2\rceil+1$,
\item at most $\lceil d/2\rceil$ eigenvalues of $g(x)$ have multiplicity more than 1.
\end{itemize}
\end{theorem}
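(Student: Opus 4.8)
The statement is an immediate consequence of the Hessenberg decomposition machinery just developed, combined with the two eigenvalue-multiplicity estimates on the class $\hess_n^k$ proved in the previous section. The plan is to apply Proposition~\ref{hessenbergdefault} (or, equivalently, Theorem~\ref{hessenbergsummary}) to the self-adjoint field $f$, extract the resulting conjugated field, and then read off the multiplicity bounds from Propositions~\ref{maxeigmulhess} and~\ref{mineigmul1hess}.

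\textbf{Step 1.} Set $c=\lceil d/2\rceil+1$ and $k=n-c=n-\lceil d/2\rceil-1$. Apply Proposition~\ref{hessenbergdefault} to $f$ and $\epsilon$: this produces $u\in\continuous(X,\unitaries_n)$ and $g_0\in\continuous(X,\hess_n^{k})$ such that, for all $x\in X$, the difference $(f-u^*g_0u)(x)$ is self-adjoint and has norm $<\epsilon(x)$. Now set $g=u^*g_0u$. Conjugation by a unitary preserves self-adjointness of the \emph{field}: since $f(x)$ is self-adjoint and $(f-g)(x)$ is self-adjoint, $g(x)=f(x)-(f-g)(x)$ is self-adjoint; equivalently, $g_0(x)=ug u^*(x)$ is self-adjoint, so $g_0$ is a \emph{self-adjoint} element of $\hess_n^k$ at each point. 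Also $\Vert f(x)-g(x)\Vert=\Vert (f-u^*g_0u)(x)\Vert<\epsilon(x)$, giving the first bullet.

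\textbf{Step 2.} The eigenvalues of $g(x)$ coincide with those of $g_0(x)$, with the same multiplicities, because $g(x)=u(x)^*g_0(x)u(x)$ is a unitary conjugate. Apply Proposition~\ref{maxeigmulhess} to $g_0(x)\in\hess_n^k$: all eigenvalues have multiplicity at most $\max(n-k,1)=\max(c,1)=c=\lceil d/2\rceil+1$ (note $c\geq 1$ always). This gives the second bullet. Apply Proposition~\ref{mineigmul1hess} to the self-adjoint matrix $g_0(x)\in\hess_n^k$: at most $\max(n-k-1,1)=\max(c-1,1)=\max(\lceil d/2\rceil,1)$ eigenvalues have multiplicity more than $1$. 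When $d\geq 2$ this equals $\lceil d/2\rceil$; when $d\leq 1$ we have $\lceil d/2\rceil\leq 1$, and the claimed bound ``at most $\lceil d/2\rceil$'' reads ``at most $0$'' (for $d=0$) or ``at most $1$'' (for $d=1$) --- in the $d=0$ case one checks directly that $k=n-1$, so $g_0(x)\in\hess_n^{n-1}=\hess_n^n$ is unreduced tridiagonal-type and by Proposition~\ref{mineigmul1hess} has at most $\max(0,1)=1$, but in fact an unreduced Hessenberg self-adjoint (Jacobi) matrix has all eigenvalues simple, giving $0$. So the third bullet holds as stated.

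\textbf{Main obstacle.} There is essentially no obstacle: the theorem is packaging. The only point requiring a little care is the bookkeeping in the boundary cases $d=0,1$ where the formulas $\max(n-k,1)$ and $\max(n-k-1,1)$ in Propositions~\ref{maxeigmulhess} and~\ref{mineigmul1hess} differ from the naive $n-k$ and $n-k-1$; one must verify that the stated bounds $\lceil d/2\rceil+1$ and $\lceil d/2\rceil$ are still correct (or, as above, even conservative) in those cases. For $d=0$ one should note that Proposition~\ref{mineigmul1hess}'s bound of $1$ is not sharp --- a self-adjoint unreduced Hessenberg matrix is a Jacobi matrix with positive off-diagonal entries, hence has simple spectrum --- so the stated bound of $0$ is fine. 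For all $d\geq 1$ the arithmetic is immediate.
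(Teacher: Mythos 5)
Your proof is correct and follows the same route as the paper: apply Proposition~\ref{hessenbergdefault}, pass to the conjugate $g=u^*g_0u$, and read off the multiplicity bounds from Propositions~\ref{maxeigmulhess} and~\ref{mineigmul1hess}. Your extra digression in the $d=0$ case is superfluous, since for $k=n-1$ Proposition~\ref{maxeigmulhess} already yields multiplicity at most $1$, so the third bullet follows automatically without invoking simplicity of Jacobi spectra.
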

\begin{remark}
Notice that in particular, this means that $g(x)$ has at least $n-d-1$ eigenvalues of multiplicity $1$. Thus, for large $n$ and fixed $d$, almost all eigenvalues of $g(x)$ have multiplicity 1.
\end{remark}
\begin{proof}
Apply Theorem \ref{hessenbergdefault} to $f$ to obtain a matrix field $g\in\continuous(X,\hess_n^k)$ with $k=n-\lceil d/2\rceil-1$. Since $f$ and $g-f$ are self-adjoint, so is $g$. The wanted properties on the eigenvalues of $g(x)$ are then given by Propositions \ref{maxeigmulhess} and \ref{mineigmul1hess}.
\end{proof}

The next two theorems are finer variants for spaces of low dimension, giving optimal eigenvalue separation results up to dimension 4. Let us first establish \emph{total} separation of eigenvalues when that it possible. But when is that possible? Choi and Elliot \cite[Theorem 1]{CE90} show that that is possible for compact metrizable spaces of dimension up to 2, and provide (essentially) the following counterexample showing that that is not possible already on the Euclidean 3-ball $B^3$:
\begin{eqnarray*}
f\::\:B^3 & \rightarrow & \mat_2(\C)\\
\left(\begin{matrix} x \\ y \\ z \end{matrix}\right) & \mapsto & \frac12 \left(\begin{matrix} 1-z & x+iy \\ x-iy & 1+z\end{matrix}\right).
\end{eqnarray*}
Notice that the restriction of $f$ to the boundary $S^2$ or $B^3$ is the Bott projection, which is a nontrivial rank one projection on $S^2$ (the corresponding line bundle is the Hopf bundle). If $f$ could be approximated by normal matrix fields with distinct eigenvalues, then an approximation of the Bott projection would extend to $B^3$, hence would be trivial, a contradiction.\\

This shows that the following is optimal as far as total separation of eigenvalues is concerned; below we will also give an optimal result for dimensions 3 and 4.
\begin{theorem}
\label{separationdim2}
Let $X$ be a normal space such that $\dim X\leq 2$. Let $n\geq 1$. Let $f\in\continuous(X,\mat_n^\sa)$. Let $\epsilon\in\continuous(X,\R_{>0})$. There exist $g\in\continuous(X,\mat_n^\sa)$ such that for all $x\in X$, $\Vert f(x)-g(x)\Vert <\epsilon(x)$, and $g(x)$ has $n$ distinct eigenvalues.
\end{theorem}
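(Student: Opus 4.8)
The plan is to iterate the Hessenberg reduction in dimension $\leq 2$, combined with a column-by-column separation argument, rather than to invoke Theorem \ref{separationdefault} directly (which in dimension $2$ only gives eigenvalues of multiplicity at most $2$, with at most one repeated). The point is that in dimension $2$ we have enough room at each stage to perturb one more column into unreduced Hessenberg form. First I would apply Proposition \ref{hessenbergdefault} with $d=2$, giving $u\in\continuous(X,\unitaries_n)$ and $g_0\in\continuous(X,\hess_n^{k_0})$, $k_0=n-2$, with $g_0-ufu^*$ self-adjoint and small; replacing $f$ by $ufu^*$ we may assume $f$ itself is already in $\hess_n^{n-2}$ and we only need to handle the last two columns. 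Since $f$ is self-adjoint, being in $\hess_n^{n-2}$ means $f=h\oplus(\text{a }2\times 2\text{ self-adjoint block})$ up to the subdiagonal entries, and by Proposition \ref{maxeigmulhess} the top $(n-2)\times(n-2)$ part, once it is unreduced Hessenberg, already has distinct eigenvalues. So the heart of the matter is to separate the remaining two eigenvalues while keeping everything continuous.

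The key step is to push the reduction one column further, i.e.\ to a map into $\hess_n^{n-1}=\hess_n^n$. Following the proof of Proposition \ref{hessenbergdim3}, the relevant obstruction is a single $2\times 1$ block $b$ (the entries $f_{n-1,n-2},f_{n,n-2}$ after the first $n-2$ columns are fixed), viewed as a continuous map $X\to\C^2\cong\R^4$; since $\dim X\leq 2<4$, Lemma \ref{avoidzero} lets us perturb $b$ away from $0$ by a self-adjoint perturbation and then apply a continuous Givens rotation to land in $\hess_n^{n-1}$. At that point $f=h'\oplus(\lambda)$ where $h'\in\hess_n^{n-1}$ restricted to the first $n-1$ coordinates is unreduced Hessenberg, hence by Proposition \ref{maxeigmulhess} has $n-1$ distinct eigenvalues $\lambda_1(x)>\cdots>\lambda_{n-1}(x)$, each varying continuously, and $\lambda(x)$ is a real-valued function. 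The only remaining failure of full separation is coincidences between $\lambda(x)$ and one of the $\lambda_i(x)$.

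To remove those coincidences I would perturb the single scalar entry $f_{nn}=\lambda$. Consider the continuous map $x\mapsto(\lambda_1(x),\ldots,\lambda_{n-1}(x),\lambda(x))\in\R^n$ — wait, more precisely: we want, after a small perturbation of $\lambda$ (which is a self-adjoint, rank-one-supported perturbation of the whole field), to have $\lambda(x)\notin\{\lambda_1(x),\ldots,\lambda_{n-1}(x)\}$ for all $x$. This is exactly the kind of statement Lemma \ref{avoidkmaps} delivers: in the one-dimensional target $\R$ we must dodge the $n-1$ continuous maps $\lambda_i$, and since $\dim X\leq 2$ we certainly have $\dim X\leq n$ for the lemma to apply with ambient dimension $1$ only if $1\geq\dim X+1$, which fails in general. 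So instead one should observe that perturbing $\lambda$ changes the whole matrix and hence all eigenvalues; the clean way is to go back one more step and note that the full eigenvalue map of $f$ into the ordered simplex is continuous, and it suffices to perturb $f$ so that the characteristic polynomial has distinct roots pointwise. Concretely: the discriminant of the characteristic polynomial is a continuous real function, and its zero set is where eigenvalues collide; I would perturb the bottom-right $2\times 2$ block (a map $X\to\mat_2^\sa\cong\R^4$, with $\dim X\leq 2<4$) using Lemma \ref{avoidzero}-type arguments to keep it off the (codimension-$\geq 2$, in fact codimension $\geq 3$ since the discriminant vanishes to high order, or at least codimension $\geq 2$) collision locus, and simultaneously off the collision loci with the now-fixed $\lambda_i$.

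The main obstacle, and the place where care is needed, is precisely this last separation of the two "free" eigenvalues against each other and against the $n-2$ already-separated ones: one must set it up so that the relevant target space for the perturbation has dimension strictly greater than $2$ at every stage. The cleanest route is inductive on $n$: having arranged (by the Givens step above) that $f=h'\oplus(\lambda)$ with $h'$ a continuous field of $(n-1)\times(n-1)$ unreduced Hessenberg self-adjoint matrices — which by induction we may assume already has $n-1$ distinct eigenvalues — we only need $\lambda(x)$ to avoid the $n-1$ continuous functions $\lambda_i(x)$; but rather than perturb $\lambda$ alone, perturb the pair $(f_{n-1,n-1},f_{nn})$ together with the coupling entry, i.e.\ the bottom-right $2\times 2$ block, which lives in $\R^4$, so Lemma \ref{avoidinmanifold} (or directly Lemma \ref{avoidzero} after subtracting each target) applied with $\dim X\leq 2<3\leq 4$ lets us simultaneously avoid the $n-1$ "collision with $\lambda_i$" hypersurfaces and the single "the two new eigenvalues collide" locus, all of which are closed and avoidable. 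I expect the bookkeeping of which entries to perturb, and verifying continuity of the eigenvalues and spectral projections of the Hessenberg blocks via Propositions \ref{maxeigmulhess} and \ref{mineigmul1hess}, to be the only real work; the dimension count $2<4$ is comfortable throughout.
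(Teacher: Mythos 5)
Your proposal has a genuine gap, and the argument does not reach the theorem. The central problem is your claim that you can push the reduction to $\hess_n^{n-1}=\hess_n^n$ in dimension $2$. Starting from $g\in\continuous(X,\hess_n^{n-2})$ (which Proposition \ref{hessenbergdefault} with $d=2$ gives directly), the entries $g_{n-1,n-2}$ and $g_{n,n-2}$ are \emph{already} fixed (the first is in $\R_{>0}$, the second is $0$), so there is no ``$2\times 1$ block $b$'' obstruction there; you have miscounted columns and are rerunning the $\hess_n^{n-3}\to\hess_n^{n-2}$ Givens step on a column that is already reduced. The actual obstruction to reaching $\hess_n^{n-1}$ is the single complex entry $g_{n,n-1}\in\C\cong\R^2$, and making that nonzero everywhere via Lemma \ref{avoidzero} requires $\dim X\leq 1$, not $\dim X\leq 2$. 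So the step fails. (Note also that if you \emph{could} reach $\hess_n^{n-1}=\hess_n^n$, you would be done instantly by Proposition \ref{maxeigmulhess}, with no further eigenvalue-dodging needed; the fact that your proof keeps going is a sign of the confusion.)

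Your fallback — perturbing the bottom-right $2\times 2$ self-adjoint block to dodge ``collision hypersurfaces'' — is not supported by the lemmas available: Lemma \ref{avoidzero} and Lemma \ref{avoidkmaps} avoid finitely many continuous \emph{point}-valued maps, not codimension-$\geq 2$ loci, and that $2\times 2$ block is in any case coupled to the rest of the matrix through $g_{n-1,n-2}>0$, so its own eigenvalues are not eigenvalues of $g(x)$. The paper's proof uses a different, essential idea: stay in $\hess_n^{n-2}$, and observe that by self-adjointness the pair $b(x)=(g_{n,n-1}(x),\,g_{n,n}(x))$ takes values in $\C\times\R\cong\R^3$, whose dimension $3$ strictly exceeds $\dim X\leq 2$. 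Lemma \ref{avoidkmaps} then perturbs $b$ (self-adjointly) to avoid the $n-1$ point-valued maps $x\mapsto(0,\lambda_i(x))$, where the $\lambda_i$ are the (automatically distinct, by Proposition \ref{maxeigmulhess}) eigenvalues of the top-left $(n-1)\times(n-1)$ corner. Then a pointwise case split finishes it: if $g_{n,n-1}(x)\neq 0$, a phase rotation puts $g(x)$ in $\hess_n^n$ so all eigenvalues are simple; if $g_{n,n-1}(x)=0$, then $g(x)$ genuinely splits as an $(n-1)\times(n-1)$ block plus a scalar, and the perturbation guarantees the scalar $g_{n,n}(x)$ avoids every $\lambda_i(x)$. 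The crucial observation you missed is that self-adjointness makes $g_{n,n}$ \emph{real}, so the relevant target is $\R^3$ rather than $\R^4$, and the relevant bad set is finitely many points per $x$, not a hypersurface.
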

\begin{proof}
The statement is trivial if $n=1$, so we assume without loss of generality that $n\geq 2$. Apply Theorem \ref{hessenbergdim3} to $f$ to obtain a self-adjoint matrix field $g\in\continuous(X,\hess_n^k)$ with $k=n-2$. Since $g\in\continuous(X,\hess_n^{n-2})$ and is self-adjoint, it has the form
\[
\xymatrix@!@=2pt{
\ast\ar@{-}[rrdd] & +\ar@{-}[rrdd] & 0\ar@{-}[rrdd]\ar@{-}[rrr] &  &  & 0\ar@{-}[dd] \\
+\ar@{-}[rrdd] & & & & & \\
0\ar@{-}[rrdd]\ar@{-}[ddd] & & \ast & + & 0 & 0\\
& & + & \ast & + & 0\\
& & 0 & + & \ast & \ast \\
0\ar@{-}[rr] & & 0 & 0 & \ast & \ast
\save
  "6,5"."6,6"*[F]\frm{}
\restore
}
\]
where ``+'' again denotes a coefficient that is in $\R_{>0}$ for all $x\in X$. The framed block, which we will call $b$, is a continuous map from $X$ to $\C\times\R$: indeed, since $g$ is self-adjoint, its diagonal coefficients are real. For $x\in X$, let $\lambda_1(x)>\ldots>\lambda_{n-1}(x)$ denote the eigenvalues of the top-left $(n-1)\times(n-1)$ corner of $g$. Notice that they are all distinct by Proposition \ref{maxeigmulhess}. Since $\dim(\C\times\R)=3$ and $\dim X\leq 2$, we may apply Lemma \ref{avoidkmaps}, showing that an arbitrarily small perturbation ensures that for all $x\in X$,
$$b(x)\not\in\{(0,\lambda_1(x)),\ldots,(0,\lambda_{n-1}(x))\}.$$
Moreover, this can be achieved by a self-adjoint perturbation on $g$, by applying a suitable perturbation on the other side of the diagonal. We then have that for all $x\in X$, either $g_{n,n-1}(x)\neq0$, or $g_{n,n}(x)\neq\lambda_i(x)$ for all $i$. If $g_{n,n-1}(x)\neq 0$, then the proof of Proposition \ref{maxeigmulhess} gives that all eigenvalues of $g(x)$ have multiplicity 1. If on the other hand $g_{n,n-1}(x)=0$ and $g_{n,n}(x)\neq\lambda_i(x)$, then $g(x)$ is a block-diagonal matrix of the form $\alpha\oplus\beta$ where the block $\alpha$ is of size $n-1$ and has $n-1$ distinct eigenvalues $\lambda_1(x),\ldots,\lambda_{n-1}(x)$, and the block $\beta$ is just the $1\times1$ matrix $(g_{n,n}(x))$, and we already know that $g_{n,n}(x)\neq\lambda_i(x)$ for all $i$, so $g(x)$ has $n$ distinct values.
\end{proof}

In dimensions 3 and 4, as we already said, complete separation of eigenvalues can't be hoped for, so the following is optimal:
\begin{theorem}
\label{separationdim4}
Let $X$ be a normal space such that $\dim X\leq 4$. Let $n\geq 1$. Let $f\in\continuous(X,\mat_n^\sa)$. Let $\epsilon\in\continuous(X,\R_{>0})$. There exist $g\in\continuous(X,\mat_n^\sa)$ such that for all $x\in X$, $\Vert f(x)-g(x)\Vert <\epsilon(x)$, and $g(x)$ has at least $n-1$ distinct eigenvalues.
\end{theorem}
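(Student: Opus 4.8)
The plan is to run the strategy of Theorem \ref{separationdim2} one block further down, at the cost of a heavier dimension-theoretic input. Since $n\le 2$ is trivial, assume $n\ge 3$. First apply Theorem \ref{hessenbergdefault} with $d=4$ (here $\lceil 4/2\rceil+1=3$) to get $u\in\continuous(X,\unitaries_n)$ and a self-adjoint perturbation $g$ of $ufu^*$ with $g(x)\in\hess_n^{n-3}$; since conjugation by a unitary field does not change eigenvalues it suffices to produce such a $g$ with $g(x)$ having $\ge n-1$ distinct eigenvalues and then pass to $u^*gu$. Because $g(x)\in\hess_n^{n-3}$ is self-adjoint, its top-left $(n-2)\times(n-2)$ corner $h(x)$ is Hermitian tridiagonal with positive subdiagonal, so lies in $\hess_{n-2}^{n-2}$; by Proposition \ref{maxeigmulhess} it has $n-2$ distinct, continuously varying eigenvalues $\lambda_1(x)>\cdots>\lambda_{n-2}(x)$. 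Moreover the only entries of $g$ lying outside $h$ and outside the bottom-right $2\times2$ corner $C:=(g_{ab})_{a,b\in\{n-1,n\}}$ are $g_{n-2,n-1}$, $g_{n-2,n}$ and their conjugates; write $b:=(g_{n-2,n-1},g_{n-2,n})\in\C^2$. From now on I perturb only the entries in $b$ and $C$, keeping $h$ — hence the $\lambda_i$ — fixed; this keeps $g$ self-adjoint and close to the original.

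Two pointwise facts drive the argument. First, if $b(x)\neq 0$ then $g(x)$ has at least $n-1$ distinct eigenvalues: a Givens rotation on the coordinates $n-1,n$ conjugates $g(x)$ into $\hess_n^{n-2}$, to which Propositions \ref{maxeigmulhess} and \ref{mineigmul1hess} give multiplicities $\le 2$ with at most one eigenvalue of multiplicity $>1$ — and no continuity of this rotation is needed. Second, if $b(x)=0$ then $g(x)=h(x)\oplus C(x)$, which (the $\lambda_i(x)$ being distinct) has fewer than $n-1$ distinct eigenvalues precisely when $C(x)$ lies in the closed set $Y_{\lambda(x)}\subset\mat_2^\sa\cong\R^4$ consisting of the $n-2$ scalar matrices $\lambda_i(x)1_2$ together with, for each pair $i<j$, the $2$-sphere $\{C:\tr C=\lambda_i(x)+\lambda_j(x),\ \det C=\lambda_i(x)\lambda_j(x)\}$ of Hermitian matrices with eigenvalue pair $\{\lambda_i(x),\lambda_j(x)\}$. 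So the task reduces to: perturb $(b,C)\colon X\to\C^2\times\mat_2^\sa\cong\R^8$ by an arbitrarily small amount to a map avoiding the fibrewise, dimension-$\le 2$ set $\{x\}\mapsto\{0\}\times Y_{\lambda(x)}$.

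I would do this in finitely many stages of successively smaller perturbations, each new conclusion being an open and hence stable condition. The scalar part $\{0\}\times\{\lambda_i(x)1_2\}$ is a finite family of continuous maps $X\to\R^8$, avoided by Lemma \ref{avoidkmaps} (here $\dim X\le 7$ more than suffices). For each pair $i<j$ the $2$-sphere component is the zero set of $\Phi_{ij}(x):=\bigl(b(x),\ \tr C(x)-\lambda_i(x)-\lambda_j(x),\ \det C(x)-\lambda_i(x)\lambda_j(x)\bigr)\in\R^6$; since $(\tr,\det)\colon\mat_2^\sa\to\R^2$ is a submersion off the scalar matrices, and no scalar matrix has eigenvalue pair $\{\lambda_i,\lambda_j\}$ with $\lambda_i\neq\lambda_j$, the map $\Phi_{ij}$ is a submersion on a neighbourhood of its zero set; hence a small perturbation of $\Phi_{ij}$ off $0\in\R^6$, furnished by Lemma \ref{avoidzero} since $\dim X\le 4\le 5$, lifts to a small perturbation of $(b,C)$ near that zero set and is unnecessary elsewhere. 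After all $\binom{n-2}{2}+1$ stages, passing back to $u^*gu$ gives the desired $g$.

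The main obstacle is precisely this last dimension-theoretic point. In Theorem \ref{separationdim2} the bad locus was a finite set of points and bare Lemma \ref{avoidkmaps} sufficed; here it genuinely contains $2$-dimensional pieces — the conjugacy-class $2$-spheres of $2\times2$ Hermitian matrices with a prescribed pair of eigenvalues, carrying exactly the Hopf-bundle obstruction behind the $S^2$/$B^3$ example. The bookkeeping ($\dim X+2<8$) is comfortably favourable, but turning it into an honest perturbation requires more than the lemmas of Section \ref{dim} as literally stated: either a submanifold refinement of Lemmas \ref{avoidzero} and \ref{avoidinmanifold}, or the trace–determinant reduction above, in which case the degeneration of $(\tr,\det)$ along the scalar matrices must be dealt with — which is why the points $\lambda_i1_2$ have to be removed first and separately. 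Alternatively one can first perturb $b$ so that $b^{-1}(0)$ is $0$-dimensional and then repair $C$ over that closed subset alone, where avoiding $Y_\lambda$ is elementary; this uses the classical fact that, over a normal space of dimension $\le 4$, a map into $\R^4$ can be approximated by one with $0$-dimensional point-preimages.
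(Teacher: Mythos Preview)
Your setup and pointwise case analysis are exactly those of the paper: reduce to $\hess_n^{n-3}$ via Theorem~\ref{hessenbergdefault}, isolate the two entries $(g_{n-1,n-2},g_{n,n-2})$ and the bottom-right $2\times 2$ corner $C$, and split into the case where the column block is nonzero (Givens rotation into $\hess_n^{n-2}$, then Propositions~\ref{maxeigmulhess} and~\ref{mineigmul1hess}) versus the block-diagonal case. The divergence is entirely in the perturbation step.

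You work in the full $8$-dimensional space $\C^2\times\mat_2^\sa$ and must avoid a genuinely $2$-dimensional fibrewise set --- the conjugacy $2$-spheres of Hermitian $2\times 2$ matrices with a prescribed eigenvalue pair. As you correctly flag, this is not covered by the lemmas of Section~\ref{dim} as written, and forces you into either a submanifold-avoidance refinement or the trace--determinant lifting argument (with its attendant care near scalars). That obstacle is real, and while both of your proposed workarounds are plausible, neither is carried out.

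The paper sidesteps the obstacle with one observation: instead of perturbing all four real parameters of $C$, perturb $C$ only along the \emph{scalar} direction, replacing $C$ by $C+c_3\,1_2$ for a single real $c_3$. This shifts both eigenvalues $\mu_1(x),\mu_2(x)$ of $C(x)$ by the same amount, so in the block-diagonal case one need only arrange $\mu_j(x)+c_3(x)\neq\lambda_i(x)$ for all $i,j$. The perturbation now lives in $\C\times\C\times\R\cong\R^5$, and the bad locus collapses to the $2(n-2)$ \emph{points} $(-a(x),-b(x),\lambda_i(x)-\mu_j(x))$ --- handled directly by a single invocation of Lemma~\ref{avoidkmaps}, since $\dim X\le 4$. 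Thus the $2$-spheres that are your main difficulty simply never appear, and the proof stays entirely within the toolkit already built.
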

\begin{remark}
Notice that in particular, this means that $g(x)$ has no eigenvalue of multiplicity more than 2, and has at most one eigenvalue of multiplicity 2.
\end{remark}
\begin{proof}
The statement is trivial if $n\leq 2$, so we assume without loss of generality that $n\geq 3$. Apply Theorem \ref{hessenbergdefault} to $f$ to obtain a matrix field $h\in\continuous(X,\hess_n^k)$ with $k=n-3$. Since $h\in\continuous(X,\hess_n^{n-3})$, it has the form
\[
\xymatrix@!@=2pt{
\ast\ar@{-}[rrdd] & +\ar@{-}[rrdd] & 0\ar@{-}[rrdd]\ar@{-}[rrr] &  &  & 0\ar@{-}[dd] \\
+\ar@{-}[rrdd] & & & & & \\
0\ar@{-}[rrdd]\ar@{-}[ddd] & & \ast & + & 0 & 0\\
& & + & \ast & \overline a & \overline b\\
& & 0 & a & \ast & \ast \\
0\ar@{-}[rr] & & 0 & b & \ast & \ast
\save
  "1,1"."4,4"*[F]\frm{}
\restore
\save
  "5,5"."6,6"*[F]\frm{}
\restore
}
\]
where ``+'' again denotes a coefficient that is in $\R_{>0}$ for all $x\in X$. Let $\mu_1(x)\geq\mu_2(x)$ denote the eigenvalues of the $2\times2$ bottom-right corner. Let $\lambda_1(x)>\ldots>\lambda_{n-2}(x)$ denote eigenvalues of the $(n-2)\times(n-2)$ top-left corner; notice that the $\lambda_i(x)$ are distinct by Proposition \ref{maxeigmulhess}, as that corner belongs to $\hess_{n-2}^{n-2}$. As illustrated in the above diagram, let $a(x)=h_{n-1,n-2}(x)$ and $b(x)=h_{n,n-2}(x)$. Since $\dim\C^2\times\R=5$ and $\dim X\leq 4$, we may apply Lemma \ref{avoidkmaps}, showing that there exists a map $c=\left(\begin{smallmatrix}c_1\\c_2\\c_3\end{smallmatrix}\right):X\rightarrow\C\times\C\times\R$ such that for all $x\in X$, $\Vert c(x)\Vert<\epsilon(x)$ and
\begin{eqnarray}
\label{c_not_equal}
c(x)& \not\in & \left\{
\left(\begin{matrix}-a(x)\\-b(x)\\(\lambda_1-\mu_1)(x)\end{matrix}\right)
,\ldots,
\left(\begin{matrix}-a(x)\\-b(x)\\(\lambda_{n-2}-\mu_1)(x)\end{matrix}\right)
,\right.\\
& & \left.\left(\begin{matrix}-a(x)\\-b(x)\\(\lambda_1-\mu_2)(x)\end{matrix}\right)
,\ldots,
\left(\begin{matrix}-a(x)\\-b(x)\\(\lambda_{n-2}-\mu_2)(x)\end{matrix}\right)
\right\}. \nonumber
\end{eqnarray}
Now use $c$ to define a perturbation $g$ of $h$ as follows. For any $i,j$ let $E_{i,j}$ denote the matrix whose $(i,j)$-th entry is 1 and whose other entries are all 0. For $x\in X$, let
$$\begin{array}{rccclcl}
g(x)& = & h(x) & + & c_1(x)E_{n-1,n-2} &+& \overline{c_1(x)}E_{n-2,n-1} \\
    &   &      & + & c_2(x)E_{n,n-2} &+& \overline{c_2(x)}E_{n-2,n} \\
    &   &      & + & c_3(x)(E_{n-1,n-1} +E_{n,n}). & &
\end{array}$$
Notice that $g$ is still continuous and self-adjoint. Now let us fix ourselves some $x\in X$ for the remainder of this proof. We want to prove that $g(x)$ has at least $n-1$ distinct eigenvalues. For brevity, let
$$a'=g_{n-1,n-2}(x)=a(x)+c_1(x)$$
and
$$b'=g_{n,n-2}(x)=b(x)+c_2(x).$$
We have either $a'\neq0$, or $b'\neq0$, or $a'=b'=0$. The rest of the proof splits into two cases:\\

\emph{First case:} $a'\neq0$ or $b'\neq0$. Let $r=\left\Vert\begin{smallmatrix}a'\\b'\end{smallmatrix}\right\Vert$. Let $u_0$ be the following rotation:
$$u_0=\frac1r\,\left(\begin{matrix}\overline a' & \overline b' \\ -b' & a' \end{matrix}\right).$$
Let $u=1_{n-2}\oplus u_0$. The unitary $u\in\unitaries_n$ is again an example of a \emph{Givens rotation}. The point is that
$$u_0\left(\begin{matrix}a'\\b'\end{matrix}\right)=\left(\begin{matrix}r\\0\end{matrix}\right)$$
so that the matrix $m=ug(x)u^*$ belongs to $\hess_n^{n-2}$. It is thus enough to prove that for any matrix $m\in\hess_n^{n-2}$, $m$ has at least $n-1$ distinct eigenvalues. There are two cases: either $m_{n,n-1}=0$ or not. If $m_{n,n-1}=0$ then the matrix $m$ is block-diagonal with a block in $\hess_{n-1}^{n-1}$ followed by a $1\times 1$ block, so the result follows from Proposition \ref{maxeigmulhess}. If, on the other hand, $m_{n,n-1}\neq 0$, then conjugating by an obvious unitary ensures that $m_{n,n-1}\in\R_{>0}$ and therefore the matrix $m$ is in $\hess_{n}^{n}$ and the result follows again from Proposition \ref{maxeigmulhess}.\\

\emph{Second case:} $a'=b'=0$. It then follows from the definition of $g$ that $g(x)$ is block-diagonal with a $(n-2)\times(n-2)$ block with the $n-2$ distinct eigenvalues $\lambda_1(x)>\ldots>\lambda_{n-2}(x)$, followed by a $2\times2$ block. It then follows from (\ref{c_not_equal}) and the definition of $g$ that the eigenvalues of the latter $2\times2$ block are not equal to $\lambda_i(x)$ for any $i$, which establishes that $g(x)$ has at least $n-1$ distinct eigenvalues.
\end{proof}

\section{The case of projections}

Projections are a special case of self-adjoint elements and so our Hessenberg decomposition results can readily be applied to them. There is, however, more that we can say in this special case, and it turns out that the following Hessenberg-like form is more useful for projections than the form $\hess_n^k$ considered so far.
\begin{definition}
\label{bhess}
For $n\in\posint$ and $k\in\{1,\ldots,n\}$, let $\bhess_n^k$ denote the set of all matrices $p$ in $\mat_n$ satisfying the following conditions:
\begin{itemize}
\item $p$ is a projection ($p=p^*=p^2$).
\item $p$ is a block diagonal matrix of the following form:
$$p\in \mat_{\alpha_1}\oplus\cdots\oplus\mat_{\alpha_r}\oplus\mat_\beta$$
where $r\in\nnegint$, where $\alpha_i\in\{1,2\}$ for all $i$, and where $\beta$ is as follows:
$$\beta=\left\lbrace\begin{array}{ll}
n-k & \text{if $\alpha_r=1$} \\
n-k-1 & \text{if $\alpha_r=2$}.
\end{array}\right.$$
Notice that it is implicitly required that $\alpha_1+\cdots+\alpha_r+\beta=n$.
\item Outside of the last block of size $\beta$, all matrix coefficients of $p$ are real nonnegative. In other words: $p_{ij}\in\R_{\geq 0}$ whenever $i,j\leq n-\beta$.
\end{itemize}
\end{definition}

\begin{theorem}
\label{bhessenbergproj}
Let $0\leq d<\infty$. Define $c$ as follows:
$$c=\left\lbrace\begin{array}{ll}
0 & \text{if $d\leq 1$} \\
2 & \text{if $2\leq d\leq 3$} \\
\left\lceil \frac d2 \right\rceil + 1 & \text{if $d\geq 4$}.
\end{array}\right.$$
Let $X$ be a normal space of covering dimension $d$. Let $n\geq 1$. Let $p\in\continuous(X,\mat_n)$ be a projection. There exists a projection $q\in\continuous(X,\mat_n)$ such that:
\begin{itemize}
\item $q(x)$ belongs to $\bhess_n^{n-c}$ for all $x\in X$
\item There exists $u\in\continuous(X,\unitaries_n)$ such that $q=upu^*$.
\end{itemize}
\end{theorem}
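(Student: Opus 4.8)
The plan is to reduce Theorem~\ref{bhessenbergproj} to the already-established Hessenberg decomposition results for self-adjoint fields, namely Proposition~\ref{hessenbergdefault} for $d\geq 4$ and Proposition~\ref{hessenbergdim3} for $d\leq 3$, and then to exploit the rigidity of \emph{exact} projections to upgrade the conclusion. The point is that a projection has spectrum $\{0,1\}$, so once we know it is unitarily conjugate (exactly, not approximately) to something close to $\hess_n^{n-c}$, the relevant perturbation can often be absorbed. So the first step is to run the Householder/Givens reduction of Proposition~\ref{hessenbergdefault} (or \ref{hessenbergdim3}) \emph{formally} on $p$ itself, conjugating one column at a time by a Householder reflection, but now being careful that at each stage we are conjugating an exact projection by an exact unitary, hence the result is still an exact projection.

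The key observation to carry this out is that when we try to zero out the subdiagonal block $b$ inside column $j$ of a projection, two things can happen: either $b$ is nowhere zero, in which case an ordinary Householder reflection makes the column Hessenberg-unreduced with a positive subdiagonal entry and we proceed; or we must first perturb $b$ to avoid $0$, using Lemma~\ref{avoidzero}. The subtlety is that in the self-adjoint setting we were free to add an arbitrary small self-adjoint perturbation, but here the result must stay an exact projection. This is where the block structure $\mat_{\alpha_1}\oplus\cdots\oplus\mat_{\alpha_r}\oplus\mat_\beta$ with $\alpha_i\in\{1,2\}$ comes in: rather than perturbing, when the subdiagonal block $b(x)$ inside column $j$ of the current projection is zero at some points, we do not attempt to reduce that column further; instead the $(1,1)$ or $(2,2)$ corner at that position is split off as one of the $\mat_{\alpha_i}$ blocks. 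More precisely, I would argue by induction on the number of columns processed: at stage $j$ we have a projection, conjugate to $p$, that is block-diagonal $\mat_{\alpha_1}\oplus\cdots\oplus\mat_{\alpha_s}\oplus\mat_\gamma$ where the first blocks are finished $1\times 1$ or $2\times 2$ pieces and $\mat_\gamma$ is the unprocessed corner, and moreover the first $j$ columns of the whole matrix already have the required form (real nonnegative, Hessenberg-unreduced where the subdiagonal entry is allowed, i.e.\ in $\bhess$-shape). To extend to column $j+1$ we look at the subdiagonal block $b$ in that column of the $\mat_\gamma$ corner, a continuous map $X\to\C^{\dim\gamma-1}\cong\R^{2(\dim\gamma-1)}$; by the dimension count of Proposition~\ref{hessenbergdefault} (or the sharper count in dimension $\leq 3$) we may assume, after a small \emph{self-adjoint} perturbation that need not preserve projectivity yet, that $b$ avoids a half-line and apply a continuous Householder reflection to get a positive subdiagonal entry. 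Then I must re-projectivize.

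The re-projectivization is the step I expect to be the main obstacle, and I would handle it as follows. After the Householder conjugation, the top-left $1\times1$ (or, keeping the chosen entry, a suitable corner) of the transformed matrix, call it $g$, is a self-adjoint field close to, but not exactly, a projection. But $g$ is block-triangular in the processed columns with a single positive subdiagonal entry in column $j+1$; the key algebraic fact about an \emph{exact} projection in $\hess$-shape is that the entries are forced: if $q=q^2=q^*$ and $q\in\hess_n^j$, then column by column the equation $q^2=q$ pins down the upper-triangular and subdiagonal entries in the first $j$ columns in terms of the remaining corner, and in particular the subdiagonal entries are each either $0$ (column finished, split off a block) or determined to be a specific positive number with all other entries in that column real. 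So instead of perturbing $b$ and then hoping the result is a projection, I would perturb the whole projection $p$ \emph{at the level of its image subbundle}: use Lemma~\ref{avoidzero} to perturb a generating section so that the first basis vector $e_1$ projects to something nonzero, etc., keeping the perturbed object an exact projection throughout by always working with $P P^*$-type expressions. Concretely: view $p$ as the field of ranges; a small self-adjoint perturbation of $p$ that stays a projection is obtained by conjugating $p$ by $\exp(i\varepsilon a)$ for small self-adjoint $a$, or equivalently by a small unitary rotation — so the perturbations needed to make each subdiagonal block nonzero, or to avoid collisions, can be realized as small unitary conjugations, which automatically preserve projectivity, with the unitaries supplied by the same Lemma~\ref{avoidzero} / Lemma~\ref{avoidinmanifold} dimension arguments applied to the relevant sphere of unit vectors.

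Once the induction runs, after $n-c$ steps we have a projection $q$, equal to $upu^*$ for a continuous unitary field $u$ (the product of all the Householder/Givens conjugations), which is block-diagonal with $1\times1$ and $2\times2$ blocks followed by an untouched $\beta\times\beta$ corner, has all its entries outside that last corner real and nonnegative (the upper-triangular entries of an exact projection in this shape are real, and we may arrange nonnegativity by a final diagonal-unitary conjugation exactly as in the proof of Proposition~\ref{hessenbergdim1}), and the last-block size $\beta$ equals $n-k$ or $n-k-1$ according to whether the last small block is $1\times1$ or $2\times2$, where $k=n-c$. That is precisely the definition of $\bhess_n^{n-c}$, so $q\in\continuous(X,\bhess_n^{n-c})$ and $q=upu^*$, completing the proof. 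I would remark that the reason the \emph{unitary} (as opposed to merely isometry) conjugation survives here, unlike in the infinite-dimensional Theorem~\ref{hessenbergoperators}, is that we never needed to pass to a larger space: all perturbations were realized inside $\mat_n$ as genuine unitary conjugations of genuine projections.
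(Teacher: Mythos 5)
Your instinct that the relation $q^2=q$ rigidly constrains a Hessenberg--shaped projection is exactly right, and it is in fact the engine of the paper's proof; but the strategy of running the Householder reduction column by column while keeping an \emph{exact} projection at every stage does not go through, and the two places where you wave your hands are precisely the places it breaks.

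First, the ``re-projectivization'' step. Lemma \ref{avoidzero} and Lemma \ref{avoidinmanifold} hand you a small perturbation of a map into $\R^m$ (or a manifold) that avoids a point; they do not hand you a unitary field close to the identity whose conjugation action produces the desired perturbation of the subdiagonal block \emph{while} keeping the whole matrix an exact projection. Conjugating $p$ by $\exp(i\epsilon a)$ preserves projectivity, but the induced change in the subdiagonal block of $p$ is a quadratically constrained function of $a$, not an arbitrary small vector, so you cannot simply invoke the dimension-theory lemmas ``applied to the relevant sphere of unit vectors'' to conclude you can steer that block away from $0$. Some further argument would be needed, and none is supplied.

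Second, and more seriously, the idea ``when the subdiagonal block $b(x)$ is zero at some points, do not reduce that column; split off a $1\times1$ or $2\times2$ block there'' has a locality problem: the vanishing set of $b$ is a closed subset of $X$ that varies with the data, and neither the Householder reflection (defined where $b\neq0$) nor the split-off (defined where $b=0$) extends continuously across its boundary. You cannot branch the construction pointwise and hope the two branches glue.

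The paper sidesteps both difficulties by \emph{not} maintaining exact projectivity through the reduction. It applies Theorem \ref{hessenbergsummary} with a very small constant $\epsilon$, producing $h=upu^*$ (an exact projection) that is within $\epsilon$ of a field in $\hess_n^{n-c}$. Truncating $h$ to its tridiagonal part $h'$ in the first $n-c$ columns, the relation $h'\approx h'^2$ forces, for each pair of adjacent subdiagonal entries, at least one to be $<2\sqrt{n\epsilon}$ --- this is the continuous, \emph{approximate} version of the rigidity you observed, and it is what resolves the locality problem, because the resulting near-block-diagonality is a property of a single continuous matrix field $h'$ rather than of a pointwise branching. A radial cutoff $\alpha$ then kills the small entries to produce an honest block-diagonal field $h''$ still within $1/4$ of the projection $h$; functional calculus $\chi_{(1/2;\infty)}(h'')$ produces an exact projection $q$ with the same block shape; and finally the standard $z=qh+(1-q)(1-h)$, $v=z(z^*z)^{-1/2}$ trick realizes $q$ as a unitary conjugate of $h$, hence of $p$. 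The order of operations --- perturb first, round to block-diagonal, \emph{then} restore exactness and conjugacy at the very end --- is the missing idea in your proposal.
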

\begin{proof}
Let $\epsilon$ be any number such that $$0<\epsilon<\frac{1}{24^2n^3}.$$
Apply Theorem \ref{hessenbergsummary} to $p$ and $\epsilon$, seeing $\epsilon$ as a constant function on $X$. Let $g,u$ be as given by the theorem. Let $h=upu^*$. For all $x\in X$, $h(x)$ is a projection and is within distance $\epsilon$ of $g(x)$ which belongs to $\hess_n^{n-c}$. Let $h'(x)$ be the matrix defined by $h'(x)_{ij}=h(x)_{ij}$ whenever $\vert i-j\vert\leq 1$ and $i,j\leq n-c$, and $h'(x)_{ij}=0$ otherwise. Thus $h'$ has this shape:
\[
\xymatrix@!@=2pt{
\ast\ar@{-}[rrrddd] & +\ar@{-}[rrdd] & 0\ar@{-}[rrdd]\ar@{-}[rrrr] & & & & 0\ar@{-}[dd] \\
+\ar@{-}[rrdd] & & & & & & \\
0\ar@{-}[rrdd]\ar@{-}[dddd] & & & + & 0\ar@{-}[rr] & & 0 \\
& & + & \ast\ar@{-}[rrr]\ar@{-}[ddd] & & & \ast\ar@{-}[ddd] \\
& & 0\ar@{-}[dd] & & & & \\
& & & & & & \\
0\ar@{-}[rr] & & 0 & \ast\ar@{-}[rrr] & & & \ast\\
}
\]
where a ``+'' denotes a positive coefficient. Notice that $h'(x)$ is still within distance $n\epsilon$ of the projection $h(x)$. It follows that $h'(x)^2$ is within distance $4n\epsilon$ of $h'(x)$. But, for all $1\leq j\leq \min(n-2,n-c)$, the $(j+2,j)$-th coefficient of $h'(x)^2$ is equal to $h'(x)_{j+1,j}h'(x)_{j+2,j+1}$. It follows that
$$ h'(x)_{j+1,j}h'(x)_{j+2,j+1} < 4n\epsilon \;\;\;\text{for all}\;x\in X\;\text{and}\;1\leq j\leq \min(n-2,n-c).$$
Therefore, for all such $x$ and $j$, either $h'(x)_{j+1,j}$ or $h'(x)_{j+2,j+1}$ must be smaller than $2\sqrt{n\epsilon}$. This says that for every $x$, the top-left $(n-c)\times(n-c)$ corner of $h'(x)$ is ``nearly'' block-diagonal with blocks of size 1 or 2. It should be noted that this block-diagonal form (\ie the sequence of block sizes) depends on $x$. A similar argument in the $(n-c)$-th column shows that for all $x\in X$, for all $k\geq n-c+2$, either $h'(x)_{n-c+1,n-c}$ or $\vert h'(x)_{k,n-c+1}\vert$ must be smaller than $2\sqrt{n\epsilon}$. Let $\alpha$ be the function defined on $\C$ by letting $\alpha(0)=0$ and
$$\alpha(z)=z\,\frac{\max\left(0,\vert z\vert-2\sqrt{n\epsilon}\right)}{\vert z\vert} \;\;\;\text{for all }\;z\in \C-\{0\}.$$
Define a new matrix field $h''$ by letting
$$h''(x)_{ij}=\alpha(h'(x)_{ij})\;\;\;\text{for all}\;x\in X\;\text{and all}\;i,j.$$
Notice that $h''(x)$ is within distance $2n^{3/2}\epsilon^{1/2}$ of $h'(x)$, hence
$$\Vert h''(x) - h(x)\Vert < 6n^{3/2}\epsilon^{1/2}.$$
Since $\epsilon<1/(24^2n^3)$, it follows that $\Vert h''(x) - h(x)\Vert < 1/4$. Also notice that for every $x\in X$, $h''(x)$ has the block-diagonal shape described in Definition \ref{bhess} for $k=n-c$. Again, it should be noted that this block-diagonal form depends on $x$. Anyway, for every separate $x$, this block diagonal form is preserved when we replace $h''(x)$ by a function of it, by functional calculus. Since $\Vert h''(x) - h(x)\Vert < 1/4$ and $h(x)$ is a projection, we know that $1/2$ does not belong to the spectrum of $h''(x)$, hence if we let $\chi_{(1/2;\infty)}$ be the characteristic function of $(1/2;\infty)$, and let $q(x)=\chi_{(1/2;\infty)}(h''(x))$ by functional calculus, it follows that $q\in\continuous(X,\mat_n)$ is a projection, that $q(x)$ has the same block diagonal shape as $h''(x)$ for each $x\in X$, and also that $\Vert q(x) - h''(x)\Vert \leq \Vert h''(x) - h(x)\Vert < 1/4$ and hence
\begin{equation}\label{distqh}\Vert q(x) - h(x) \Vert < \frac12\cdot\end{equation}
The following argument is then most classical: let
$$z=qh + (1-q)(1-h),$$
notice that it follows from inequality (\ref{distqh}) that $z(x)$ is invertible for all $x\in X$; and construct the unitary $v(x)$ of its polar decomposition as follows:
$$v(x) = z(x) \left(z(x)^*z(x)\right)^{-1/2}.$$
This unitary realizes the unitary equivalence between $q$ and $h$, completing the proof.
\end{proof}

\section{Application: trivial summands of vector bundles}

It is a well-known principle that over a $d$-dimensional space, under certain conditions, complex vector bundles of rank $n$ have trivial summands of rank roughly $n-d/2$. As we said in the introduction, this has long been known for CW-complexes, and the case of compact Hausdorff spaces can be reduced to that case using some advanced results of dimension theory (see \cite[Proposition 4.2]{P07}).\\

In this section, using a completely different approach, we show that this actually works for all normal spaces. This is obtained as a corollary of our Hessenberg decomposition results. Thus, this is a generalization of previously known results, with a far more elementary proof. By a \emph{vector bundle}, we mean a locally trivial complex vector bundle. We will make use of the notion of \emph{finite type} developed by Vaserstein \cite{V86} which, in the case of normal spaces, just means that there exists a finite open covering consisting of open subsets on each of which the bundle is trivial.

\begin{theorem}
\label{trivialsummand}
Let $0\leq d<\infty$. Let $\gamma$ be defined as follows:
$$\gamma=\left\lbrace\begin{array}{ll}
0 & \text{if $d\leq 1$} \\
1 & \text{if $2\leq d\leq 3$} \\
\left\lceil \frac d2 \right\rceil & \text{if $d\geq 4$}.
\end{array}\right.$$
Let $X$ be a normal space such that $\dim X = d$. Let $\xi$ be a vector bundle of finite type over $X$. Let
$$b = \min_{x\in X} \dim \xi_x.$$
Suppose that $b\geq \gamma+1$. It follows that $\xi$ has a trivial summand of rank $b-\gamma$. In other words, $\xi$ possesses cross-sections $s_1,\ldots,s_{b-\gamma}$ such that for all $x\in X$, the family $(s_1(x),\ldots,s_{b-\gamma}(x))$ is linearly independent.
\end{theorem}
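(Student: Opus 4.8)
The plan is to pass from bundles to projections and then feed $p$ into Theorem \ref{bhessenbergproj}. Because $\xi$ has finite type, over the normal space $X$ it embeds as a subbundle of a trivial bundle: choosing a finite trivializing cover and a subordinate partition of unity, one writes $\xi\cong\mathrm{im}(p)$ for some projection $p\in\continuous(X,\mat_n)$ with $\mathrm{rk}\,p(x)=\dim\xi_x\geq b$ for every $x$. By the reformulation built into the statement, it is then enough to produce $b-\gamma$ cross-sections of $\mathrm{im}(p)$ that are linearly independent at every point. Applying Theorem \ref{bhessenbergproj} gives $u\in\continuous(X,\unitaries_n)$ such that $q:=upu^*$ takes values in $\bhess_n^{n-c}$, and since conjugation by $u$ carries cross-sections of $\mathrm{im}(q)$ to cross-sections of $\mathrm{im}(p)$, I may replace $p$ by $q$. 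Note that $\gamma=c-1$ when $d\geq 2$, whereas for $d\leq 1$ we have $\gamma=c=0$ and $q(x)\in\bhess_n^n$ (no tail block); I would dispose of that easier case first.

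Next I would exploit the shape of a matrix in $\bhess_n^{n-c}$. At each $x$, $q(x)$ is block diagonal, $q(x)=P(x)\oplus T(x)$, where the ``head'' $P(x)$ is a projection whose own block decomposition uses only $1\times 1$ blocks (each equal to $0$ or $1$) and $2\times 2$ blocks, all of whose matrix entries are real and nonnegative, while the ``tail'' $T(x)$ is an unconstrained projection acting on a coordinate block of size at most $c$. A $2\times 2$ head block is automatically of rank exactly one --- a symmetric tridiagonal matrix with a nonzero subdiagonal entry has simple eigenvalues, so as a projection it has rank at most one, and a nonzero off-diagonal entry rules out rank zero --- hence equals $vv^*$ for the nonnegative unit vector $v=(\sqrt{a},\sqrt{b})$ with $a+b=1$, which varies continuously with the block. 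I would assemble the required sections of $\mathrm{im}(q(\cdot))$ out of these vectors $v$ together with the standard basis vectors attached to the $1\times 1$ head blocks that equal $1$. Since $\mathrm{rk}\,T(x)\leq c$ (and $\leq c-1$ as soon as the last head block has size $2$), the head has rank at least $b-c$ everywhere, and a short count --- using Proposition \ref{maxeigmulhess} for the blocks --- shows one can in fact extract $b-\gamma=b-c+1$ everywhere-independent sections, the extra one being supplied, at the (locally constant) locus where the tail has full rank $c$, by the last $c$ coordinate directions, which there lie in $\mathrm{im}(q(x))$.

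The hard part is continuity. The block decomposition of the head is not fixed: as $x$ moves, a $2\times 2$ block can degenerate into a $1\times 1$ block equal to $1$ next to one equal to $0$ and then recombine, and the head/tail boundary can itself shift by one coordinate. The candidate sections built blockwise must glue across all of these transitions, and this is exactly what the nonnegativity clause in Definition \ref{bhess} is there to guarantee: at a degeneration the nonnegative unit vector $(\sqrt{a},\sqrt{b})$ converges to a standard basis vector, so the local frames fit together, and the families of lines involved contract rather than wind. I would organise this by isolating the closed ``transition loci'' --- where a subdiagonal head entry vanishes, or where the tail attains maximal rank --- and patching by Urysohn functions, in the spirit of Section \ref{dim}. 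The residual bookkeeping (verifying that the section count is exactly $b-\gamma$ at every point, separately in the ranges $d\leq 1$, $2\leq d\leq 3$, $d\geq 4$) is routine once the gluing is in place, but it is the gluing --- making an $x$-dependent combinatorial block structure yield globally continuous, globally independent sections --- that I expect to be the real work.
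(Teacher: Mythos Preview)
Your overall strategy matches the paper's --- Vaserstein, then Theorem \ref{bhessenbergproj}, then harvest sections from the block structure --- but you try to produce all $b-\gamma$ sections at once, whereas the paper first reduces, by induction on $b$, to producing a \emph{single} nonvanishing section. This is the simplification you are missing: once one everywhere-nonzero section $s$ exists, its orthogonal complement inside $\mathrm{im}(p)$ is again the image of a continuous projection in $\mat_n$ (hence of finite type) with minimum rank $b-1$, and the induction continues.

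With only one section to build, the continuity argument becomes short and explicit, and no Urysohn patching is needed. Writing $v_i(x)$ for the $i$-th column of $q(x)$, the paper takes $c=\gamma+1$, sets $i_x$ to be the least $i$ with $\Vert v_i(x)\Vert\geq 1/\sqrt2$, and defines $v(x)=v_{i_x}(x)$. A rank count (the head has rank at most $n-c$ in a block-diagonal matrix of the required shape, so if all head columns are short then the tail must be $1_c$) shows $i_x\leq n-c+1$ always. The only place $i_x$ can jump is between the two indices $j,j+1$ of a single $2\times2$ head block, and at such a transition that block is forced --- precisely by the nonnegativity clause in Definition \ref{bhess} that you correctly flagged --- to equal $\bigl(\begin{smallmatrix}1/2&1/2\\1/2&1/2\end{smallmatrix}\bigr)$, whose two columns coincide. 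Hence $v$ is continuous.

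Your direct route is not visibly wrong, but tracking $b-\gamma$ sections through a block decomposition that merges, splits, and shifts its head/tail boundary is substantially harder, and you have (rightly) identified but not carried out that gluing. The induction-to-one-section reduction sidesteps all of it; I would restructure the argument around that.
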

\begin{proof}
Since $\xi$ has finite type, by the theorems in \cite{V86}, $\xi$ is isomorphic to the column-space of a projection field $p\in\continuous(X,\mat_n)$ for some $n\in\posint$. By induction on $b$, it is enough to prove that if $b\geq \gamma+1$ then there exists a non-vanishing cross-section. So let us suppose that $b\geq \gamma+1$, and let us construct that cross-section. By Theorem \ref{bhessenbergproj}, we may choose $p$ so that
$$p(x)\in\bhess_n^{n-c}\;\;\;\text{for all}\;x\in X,\;\text{with}\;c=\gamma+1.$$
Since $b\geq c$, for all $x\in X$, we have $\rk(p(x))\geq c$. For $1\leq i\leq n$ and $x\in X$, let $v_i(x)$ denote the $i$-th column of $p(x)$, seen as a vector in $\C^n$. Let $(e_1,\ldots,e_n)$ be the standard basis of $\C^n$. Let us show that:
\begin{equation}
\label{ifcolszero}\text{For all $x\in X$, if $\Vert v_i(x)\Vert< 1/\sqrt2$ for all $i\leq n-c$, then $v_{n-c+1}(x)=e_{n-c+1}$.}
\end{equation}
Indeed, for each $x\in X$, since $p(x)$ belongs to $\bhess_n^{n-c}$, it decomposes as $\pi\oplus \rho$ with $\rho$ either in $\mat_c$ or in $\mat_{c-1}$. Since $p(x)$ is a projection, both $\pi$ and $\rho$ are. Since $\pi$ is a projection, is block-diagonal with blocks of size at most 2, and all the columns of $\pi$ have norm strictly less than $1/\sqrt2$, it follows that $\pi=0$. So $p(x)=0\oplus\rho$. Since $p(x)$ has rank at least $c$, this excludes the case $\rho\in\mat_{c-1}$ and we therefore have $\rho\in\mat_c$. Moreover $\rho$, which is a projection of rank at least $c$, must then be the $c\times c$ identity matrix. We therefore have $p(x)=0_{n-c}\oplus 1_c$. This completes the proof of (\ref{ifcolszero}).\\

For $x\in X$, let $i_x$ denote the smallest $i$ such that $\Vert v_i(x)\Vert\geq 1/\sqrt2$, and let
$$v(x)=v_{i_x}(x).$$
Note that $v$ is clearly a set-theoretic (\ie perhaps not continuous) nonvanishing cross-section of $\xi$. It remains to check that $v$ really is continuous. Let $x_0\in X$. We must show that $v$ is continuous at $x_0$.\\

By (\ref{ifcolszero}), we know that $i_{x_0} \leq n-c+1$. Moreover, if $i_{x_0}=n-c+1$ then (\ref{ifcolszero}) makes it clear that the mapping $x\mapsto i_x$ is constant on a neighborhood of $x_0$, and hence $v$ is continuous at $x_0$ in that case. Also, if $\Vert v(x)\Vert> 1/\sqrt2$, the same conclusion obviously holds.\\

Thus, there only remains to handle the case when $i_{x_0}\leq n-c$ and $\Vert v(x_0)\Vert = 1/\sqrt2$. Since $i_{x_0}\leq n-c$, the $i_{x_0}$-th column of $p(x)$ falls in the area where $p(x)$ is block diagonal with blocks of size 1 or 2. Since $p(x)$ is a projection and the $i_{x_0}$-th column has norm $1/\sqrt2$, we must be in presence of a diagonal block of size 2. Let $j$ be the index such that this $2\times 2$ block is located in columns $j$ and $j+1$. Thus we have
$$i_{x_0}\in\{j,j+1\}$$
and the $2\times 2$ block in question is
$$\left(\begin{matrix} p(x)_{j,j} & p(x)_{j,j+1} \\ p(x)_{j+1,j} & p(x)_{j+1,j+1} \end{matrix}\right).$$
This $2\times 2$ matrix is a projection; since it has a column of norm $1/\sqrt 2$, it must be of rank 1. Since moreover (see Definition \ref{bhess}) its matrix coefficients are real nonnegative, it must be of the form
$$\left(\begin{matrix} \alpha & \sqrt{\alpha-\alpha^2} \\ \sqrt{\alpha-\alpha^2} & 1-\alpha\end{matrix}\right)$$
with $\alpha\in[0;1]$. The norms of the two columns of such a $2\times 2$ matrix are
\begin{equation}\label{norms}\sqrt{\alpha}\;\;\;\text{and}\;\;\;\sqrt{1-\alpha}\end{equation}
respectively. Notice that for any value of $\alpha$, at least one of the above two norms is $\geq 1/\sqrt2$. This shows that there exists a neighborhood $U$ of $x_0$ such that $i_x\leq j+1$ for all $x\in U$. On the other hand, as we already observed, the proof of (\ref{ifcolszero}) makes it clear that $v_i(x)=0$ for all $i<j$, and therefore the neighborhood $U$ may be taken so that
$$i_x\in\{j,j+1\}\;\;\;\text{for all}\;x\in U.$$
Finally, since $\Vert v(x_0)\Vert = 1/\sqrt2$, we know that one of the two numbers in (\ref{norms}) must be equal to $1/\sqrt2$, and it follows that $\alpha=1/2$ and therefore the $2\times 2$ diagonal block in question is equal to
$$\left(\begin{matrix} 1/2 & 1/2 \\ 1/2 & 1/2 \end{matrix}\right).$$
Thus the two columns are equal, which means that it doesn't matter if $i_x$ jumps between $j$ and $j+1$, showing that $v$ is continuous at $x_0$.
\end{proof}

\section{Bounded operator fields}\label{operators}

In our investigation of Hessenberg reduction for matrix fields, our main problem has been that in the last few columns, as we approached the bottom-right of the matrix, we didn't have enough room anymore to continue the process, so we had to stop. This premature ending of the Hessenberg reduction process has been the main limiting factor to the strength of the results that we subsequently obtained, and one may wonder about ways to work around it. Of course, the premature stopping can't be completely avoided in general, because if we could obtain a complete Hessenberg form $H_n^n$, that would in particular imply that we can completely separate eigenvalues, something that we already said is impossible in general over spaces of dimension 3 and up.\\

Another way of working around this problem is to add zeros to the right and to the bottom of our matrix field, so as to make more room. Going one step further one may add infinitely many rows and columns of zeros, making our matrix field a compact operator field; going one more step further, one may start right away with any bounded operator field.\\

Throughout this section, let $H=\ell^2(\posint)$. Let $B(H)$ denote the set of bounded operators on $H$. Let $(e_i)_{i\in\posint}$ be its standard Hilbert basis. For $x\in B(H)$ and $i,j\in \posint$, let
$$x_{ij} = (xe_j, e_i)\in\C$$
be the $(ij)$-th matrix entry of $x$.

\begin{definition}
\label{hessenbergoperatordef}
A bounded operator $x\in B(H)$ is a \emph{Hessenberg operator} if $x_{ij}=0$ whenever $i>j+1$ and $x_{ij}\in\R_{>0}$ whenever $i=j+1$.
\end{definition}

\begin{definition}
A \emph{Jacobi operator} is a self-adjoint Hessenberg operator.
\end{definition}

\begin{definition}
Let $H$ be a Hilbert space. A bounded operator $x\in B(H)$ is a \emph{cyclic operator} if there exists a vector $\xi\in H$ such that the linear span of the family $(x^k\xi)_{k\in\N}$ is dense in $H$. The vector $\xi$ is then called a \emph{cyclic vector} of $x$.
\end{definition}

It is well-known (see \cite{H82}, Chapter 18) that the set of cyclic operators on $H$ is not norm-dense in $B(H)$, and that its complement is norm-dense.\\

The following result is classical:
\begin{lemma}
\label{cyclichessenberg}
A bounded operator $x\in B(H)$ is cyclic if, and only if it is of the form $uhu^*$ with $u$ a unitary operator and $h$ a Hessenberg operator.
\end{lemma}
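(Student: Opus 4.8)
The plan is to prove both directions of the equivalence, the easy one first.

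For the ``if'' direction, suppose $x = uhu^*$ with $u$ unitary and $h$ Hessenberg. I claim $e_1$ is a cyclic vector for $h$, from which it follows immediately that $ue_1$ is a cyclic vector for $x$. To see that $e_1$ is cyclic for $h$, I would argue that the span of $e_1, he_1, h^2e_1, \ldots, h^{k-1}e_1$ equals the span of $e_1,\ldots,e_k$ for every $k$. This is an easy induction on $k$: the Hessenberg shape forces $h e_j$ to lie in $\mathrm{span}(e_1,\ldots,e_{j+1})$ with a \emph{nonzero} (indeed positive) $e_{j+1}$-component, so $h^k e_1$ has a nonzero component along $e_{k+1}$ and lies in $\mathrm{span}(e_1,\ldots,e_{k+1})$, giving the inductive step. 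Hence $\bigcup_k \mathrm{span}(e_1,\ldots,e_k)$, which is dense in $H$, is contained in the closed span of the orbit $(h^k e_1)_{k\in\N}$, so $e_1$ is cyclic.

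For the ``only if'' direction, suppose $x$ is cyclic with cyclic vector $\xi$. The idea is to apply Gram--Schmidt to the sequence $\xi, x\xi, x^2\xi, \ldots$. There is a subtlety: the vectors $x^k\xi$ need not be linearly independent. However, one shows that if $x^m\xi$ lies in $\mathrm{span}(\xi, x\xi, \ldots, x^{m-1}\xi)$ for some $m$, then (applying $x$ repeatedly) that span is $x$-invariant and, being finite-dimensional and containing $\xi$, it would equal $H$, forcing $H$ to be finite-dimensional --- contrary to $H = \ell^2(\posint)$. So in fact the $x^k\xi$ are linearly independent and Gram--Schmidt produces an orthonormal basis $(\eta_k)_{k\in\posint}$ with $\eta_k \in \mathrm{span}(\xi, x\xi, \ldots, x^{k-1}\xi)$ and with the coefficient of $x^{k-1}\xi$ in $\eta_k$ positive. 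Let $u$ be the unitary sending $e_k$ to $\eta_k$, and set $h = u^* x u$, so $x = uhu^*$. It remains to check that $h$ is Hessenberg. Since $x \eta_k \in x\cdot\mathrm{span}(\xi,\ldots,x^{k-1}\xi) = \mathrm{span}(x\xi,\ldots,x^{k}\xi) \subseteq \mathrm{span}(\eta_1,\ldots,\eta_{k+1})$, we get $h_{ik} = \scalprod{h e_k, e_i} = \scalprod{x\eta_k,\eta_i} = 0$ for $i > k+1$. For the subdiagonal, $h_{k+1,k} = \scalprod{x\eta_k,\eta_{k+1}}$; writing $\eta_k = c\, x^{k-1}\xi + (\text{lower order})$ with $c>0$, the vector $x\eta_k$ has $x^{k}\xi$-component $c$, and since $\eta_{k+1}$ is the unit vector obtained by normalizing the component of $x^{k}\xi$ orthogonal to $\mathrm{span}(\eta_1,\ldots,\eta_k) = \mathrm{span}(\xi,\ldots,x^{k-1}\xi)$ with positive leading coefficient, a short computation gives $h_{k+1,k} > 0$.

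The main obstacle is the linear-independence issue in the ``only if'' direction: one must rule out the orbit of $\xi$ being eventually linearly dependent, which is exactly where the infinite-dimensionality of $H$ is used (and is the reason the analogous statement can fail for finite matrices, where cyclicity is equivalent to having a single Jordan block per eigenvalue). Once that is settled, everything else is a careful but routine bookkeeping of the Gram--Schmidt process and the Hessenberg shape; I would present it cleanly rather than grinding through the normalization constants.
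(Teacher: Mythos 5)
Your proof is correct and takes essentially the same route as the paper's: $e_1$ is a cyclic vector for a Hessenberg operator in one direction, and Gram--Schmidt applied to the orbit $(x^{k}\xi)_{k\in\nnegint}$ of a cyclic vector in the other. The only substantive addition is that you justify the linear independence of the orbit via the infinite-dimensionality of $H$, a step the paper asserts without comment; your verification of the positive subdiagonal is likewise the ``straightforward check'' the paper leaves to the reader.
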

\begin{proof}
If $h$ is Hessenberg then $e_1$ is a cyclic vector of it, hence $ue_1$ is a cyclic vector of $uhu^*$. Conversely, suppose that $x$ is a cyclic operator. Let $\xi$ be a cyclic vector of $x$. For $k\in \posint$, let $f_k=x^{k-1}\xi$. Then $(f_k)_{k\in \posint}$ is a linearly independent family whose span is dense in $H$. Let $(g^k)_{k\in \posint}$ be the Hilbert basis of $H$ obtained by applying the Gram-Schmidt orthonormalization process to $(f_k)_{k\in \posint}$. Let $u\in B(H)$ be the unitary defined by letting $u(e_k)=g^k$. It is then straightforward to check that $u^*xu$ is Hessenberg.
\end{proof}

Notice that it is a corollary of the previous lemma that any eigenspace of a cyclic operator has dimension 1. The converse is true for diagonalizable finite matrices. These facts suggest to think of cyclicity as a generalized notion of ``multiplicity 1''.\\

In this section, we prove the following theorem:

\begin{theorem}
\label{hessenbergoperators}
Let $X$ be either a compact space or a finite-dimensional normal space. Let $f$ be a strongly continuous map from $X$ to $B(H)$. For any $\epsilon\in\continuous(X,\R_{>0})$, there exist maps $g$ and $v$ from $X$ to $B(H)$ such that, letting $h=v^*gv$, the following conditions hold:
\begin{itemize}
\item $v(x)$ is an isometry for all $x\in X$.
\item $h(x)$ is Hessenberg for all $x\in X$.
\item $v$, $g$, $h$ are strongly continuous.
\item $\Vert f(x)-g(x)\Vert<\epsilon(x)$ for all $x\in X$. Moreover, $f-g$ is norm-continuous, compact, and self-adjoint.
\end{itemize}
\end{theorem}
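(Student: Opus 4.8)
The plan is to run an infinite Hessenberg reduction, one column at a time, in the spirit of the matrix-field proofs above but exploiting the infinite-dimensional room that makes the process never stop. At stage $p$ we will have arranged that $g$ is Hessenberg in its first $p$ columns (with positive entries on the subdiagonal there), and we will need to clear out the $(p+1)$-st column below the subdiagonal and make the subdiagonal entry positive. The subdiagonal-and-below part of the $(p+1)$-st column is an element $b(x)$ of $H$; after a small self-adjoint perturbation we may assume $b(x)\neq 0$, and then a continuous Householder reflection $u_0(x)$ (now on the infinite-dimensional space spanned by $e_{p+2},e_{p+3},\dots$) conjugates it to a positive multiple of $e_{p+2}$, fixing the first $p+1$ basis vectors. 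The subtlety absent in the finite case is that the infinitely many reflections must be composed into a single well-defined map $v(x)$; this is where $v(x)$ ends up being an isometry rather than a unitary, and where strong (rather than norm) continuity is all we can expect.

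First I would set up the bookkeeping: write the desired $h$ as a strong limit of partial conjugations. Concretely, let $v_p(x)=1_{p+1}\oplus w_p(x)$ be the Householder reflection clearing column $p+1$, set $V_p(x)=v_1(x)\cdots v_p(x)$, and define $v(x)$ by $v(x)e_k=\lim_p V_p(x)e_k$; the point is that $V_p(x)e_k$ stabilizes once $p\geq k$, so the limit exists, $v(x)$ is an isometry, and $x\mapsto v(x)e_k$ is continuous because it equals the continuous map $x\mapsto V_k(x)e_k$. The perturbation $g$ is assembled the same way: at stage $p$ we add a self-adjoint, finite-rank perturbation $\delta_p$ supported on the $(p+1)$-st row and column, of norm $<\epsilon_p$ for a rapidly summable sequence $\epsilon_p$ with $\sum\epsilon_p<\epsilon$ (here $\epsilon$ is first reduced to a constant using compactness, or handled directly via the continuous-$\epsilon$ lemmas in the finite-dimensional case); then $g=f+\sum_p\delta_p$ converges in norm, $f-g=-\sum_p\delta_p$ is norm-continuous, self-adjoint, and compact as a norm-limit of finite-rank operators. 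One must check $h=v^*gv$ is genuinely Hessenberg: the $(i,j)$ entry of $h$ for $i>j+1$ is controlled by $V_p$ for $p\geq\max(i,j)$, which has already cleared it, and it stays cleared because later reflections fix the relevant coordinates.

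The one genuinely new analytic input is ensuring $b(x)\neq 0$ at each stage by a small perturbation, uniformly enough that the infinite process converges. In the compact case this is immediate — $X$ is covered by finitely many charts, or one simply notes $b$ lands in $H\cong\R^\infty$ and a tiny perturbation in a single coordinate already works since a nonzero function into $\R$ is nowhere-vanishing after an arbitrarily small perturbation when... no: here one must use that $b(x)$ has \emph{some} nonzero coordinate on a neighborhood, patch with a partition of unity, and perturb — this is elementary on a compact (hence paracompact normal) space. In the finite-dimensional normal case one instead appeals to Lemma \ref{avoidzero}: project $b$ onto the first $d+1$ coordinates of $H$, perturb that finite-dimensional piece to be nonvanishing, and leave the rest alone; since $\dim X\leq d<\dim\C^{d+1}$ this succeeds, and the perturbation is again finite-rank and as small as desired. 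The main obstacle I anticipate is not any single step but the combinatorics of verifying that the three strong limits ($v$, $g$, $h$) all exist, are strongly continuous, and satisfy the stated identity simultaneously — in particular checking that $h=v^*gv$ holds on the nose rather than up to the limiting error, which requires being careful that the stage-$p$ reflection acts trivially on the part of $g$ already in Hessenberg form so that no previously cleared entry is disturbed. Once the bookkeeping is pinned down, everything reduces to the finite-dimensional Householder step already carried out in the proof of Proposition \ref{hessenbergdefault}.
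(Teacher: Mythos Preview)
Your plan is essentially the paper's own proof: the iterative Householder step is packaged there as Lemma \ref{finitehessenberg}, the avoid-zero input as Lemma \ref{avoidinfdim}, and the stabilization-of-columns argument as Lemma \ref{strongcont}; the identity $h=v^*gv$ is then checked via weak convergence, just as you flag. One genuine slip to fix: arranging merely $b(x)\neq 0$ does \emph{not} make the Householder vector $b(x)/\Vert b(x)\Vert+e_{p+2}$ nonvanishing --- it fails precisely when $b(x)$ is a negative real multiple of $e_{p+2}$, and then your reflection $u_0(x)$ is undefined. The paper sidesteps this cleanly by perturbing not $b$ but its \emph{tail} $b'(x)=(h_{p+3,p+1}(x),h_{p+4,p+1}(x),\dots)$ to be nonzero; that single perturbation forces both $b(x)\neq 0$ and $b(x)/\Vert b(x)\Vert\neq -e_{p+2}$ at once, and is simpler than importing the sphere-avoidance argument from Proposition \ref{hessenbergdefault}. (Incidentally, for compact $X$ the paper's Lemma \ref{avoidinfdim} avoids partitions of unity entirely: by compactness some tail coordinate is uniformly small, and one just adds $\epsilon(x)$ in that coordinate.)
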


We first need a few lemmas:

\begin{lemma}
\label{avoidinfdim}
Let $X$ be either a compact space or a finite-dimensional normal space. Let $f\in\continuous(X,H)$ and $\epsilon\in\continuous(X,\R_{>0})$. It follows that there exists $g\in\continuous(X,H)$ such that for all $x\in X$, $\Vert f(x)-g(x)\Vert<\epsilon(x)$ and $g(x)\neq 0$.
\end{lemma}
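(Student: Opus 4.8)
The plan is to reduce the infinite-dimensional situation to the finite-dimensional Lemma \ref{avoidzero} by truncating to a suitable finite-dimensional subspace. The basic idea: although $H$ is infinite-dimensional, at any given point $x$ the value $f(x)$ lives in $H$, so a large but finite number of coordinates capture most of its norm; avoiding zero is then a matter of perturbing those finitely many coordinates. The subtlety is that the required number of coordinates varies with $x$, and we must patch the local constructions together continuously, which is exactly where compactness or finite covering dimension enters.

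First I would treat the compact case. For each $x_0\in X$, choose $N$ large enough that $\Vert P_N f(x_0)\Vert > \tfrac23\Vert f(x_0)\Vert$, where $P_N$ is the orthogonal projection onto $\mathrm{span}(e_1,\ldots,e_N)$; by continuity of $x\mapsto f(x)$ (into $H$, norm topology) this inequality persists on a neighborhood of $x_0$. Extract a finite subcover $U_1,\ldots,U_m$ with associated truncation levels $N_1,\ldots,N_m$, let $N=\max_j N_j$, and let $F=P_N H\cong\C^N$. Then on all of $X$ we have the uniform estimate $\Vert P_N f(x)\Vert \geq \tfrac12\Vert f(x)\Vert$ after shrinking (or more carefully: on each $U_j$, $\Vert P_{N_j}f\Vert$, hence $\Vert P_N f\Vert$, controls $\Vert f\Vert$ from below). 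Now apply Lemma \ref{avoidzero} to the $\C^N\cong\R^{2N}$-valued map $x\mapsto P_N f(x)$ — but wait, Lemma \ref{avoidzero} needs $\dim X\leq 2N-1$, which a general compact space need not satisfy. So the truncation alone is not enough: I instead perturb only within a \emph{two}-dimensional coordinate block and use that a projection-free argument suffices. Concretely, the clean approach is: having fixed $N$ with $P_N f$ bounded below by $\tfrac12\Vert f\Vert$, consider the last coordinate $f_{N+1}$ together with one fresh coordinate $f_{N+2}$ (both identically arbitrary since we may enlarge the ambient block); define $g = f$ modified on coordinates $N+1,N+2$ only. Since $\C^2$ has real dimension $4$ we still face the same dimension obstruction for general compact $X$.

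The correct route, which I now adopt, is to use infinitely many spare coordinates: since $H$ is infinite-dimensional, given the truncation level $N$ we may choose the perturbation to live in the \emph{infinite} tail $\{e_{N+1},e_{N+2},\ldots\}$ and we only ever need to make one tail coordinate nonzero on the (closed) set where $\Vert f(x)\Vert$ is small. Precisely: let $\phi\in\continuous(X,[0,1])$ be a Urysohn function that is $1$ on the closed set $\{x : \Vert f(x)\Vert \leq \tfrac12\}$-rescaled appropriately and $0$ where $\Vert f(x)\Vert$ is bounded below, and set
\[
g(x) = f(x) + \tfrac12\,\epsilon(x)\,\phi(x)\,e_{j(x)},
\]
the only difficulty being to pick the index $j(x)$ continuously. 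Here compactness lets us use a \emph{single} index $j$ on all of $X$: take $j$ larger than all the truncation levels $N_1,\ldots,N_m$ above, so that $f_j \equiv 0$ is not forced but rather the coordinate $e_j$ is ``almost orthogonal'' to $f$; then $\Vert g(x)\Vert^2 \geq \Vert f(x)\Vert^2 + \tfrac14\epsilon(x)^2\phi(x)^2 - (\text{cross term})$, and the cross term is controlled since $|f_j(x)|$ is small once $j$ exceeds the truncation level witnessing $\Vert P_{N}f(x)\Vert \geq \tfrac12\Vert f(x)\Vert$. Thus $g(x)\neq 0$: on the set where $\phi=1$ we have added something of norm $\tfrac12\epsilon(x)$ to a vector of norm $\leq\tfrac12$ in a nearly orthogonal direction, so the sum is nonzero; elsewhere $\Vert f(x)\Vert$ is already bounded below. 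And $\Vert f(x)-g(x)\Vert = \tfrac12\epsilon(x)\phi(x)<\epsilon(x)$.

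For the finite-dimensional normal case, compactness is unavailable, so I cannot use a single spare index globally; instead I truncate to a fixed finite block $F=P_N H$ of \emph{sufficiently large} dimension — but again $N$ must be chosen per-point. The remedy is that finite covering dimension $d$ means Lemma \ref{avoidzero} applies to $\R^{d+1}$-valued maps, so I only need $d+1$ spare real coordinates, i.e.\ roughly $\lceil (d+1)/2\rceil$ spare complex coordinates $e_{N+1},\ldots,e_{N+p}$. Cover $X$ by countably (or, using the normal-space machinery, suitably) many sets on which a fixed truncation level works, patch with a partition-of-unity-style Urysohn argument to produce a continuous $F'$-valued truncation $\tilde f$ (with $F'$ a fixed finite block) satisfying $\Vert \tilde f(x) - f(x)\Vert < \tfrac13\epsilon(x)$ and $\Vert\tilde f(x)\Vert \geq \tfrac12\Vert f(x)\Vert - \tfrac13\epsilon(x)$-type bounds; then apply Lemma \ref{avoidzero} inside $F'\oplus(\text{$d{+}1$ real spare coords})$ to perturb $\tilde f$ to a nonvanishing $g$ with $\Vert g - \tilde f\Vert < \tfrac13\epsilon$, and conclude $\Vert g-f\Vert<\epsilon$, $g(x)\neq 0$.

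The main obstacle, in both cases, is the bookkeeping that makes ``the truncation level depends on $x$, but I want a single finite ambient block'' precise: one must choose the global block large enough and interpolate the truncations continuously without ever dropping the norm too far, so that when Lemma \ref{avoidzero} (or the spare-coordinate trick) is applied, the resulting small perturbation genuinely kills the zeros while respecting the $\epsilon$-bound. The compact case handles this via a finite subcover and one global spare index; the finite-dimensional case via finite covering dimension, which bounds the number of spare real coordinates needed, combined with a normal-space patching of local truncations.
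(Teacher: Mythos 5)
Your core idea for the compact case is the paper's idea --- use compactness to find a single ``spare'' coordinate index $j$ that works globally --- but the execution is flawed in a way that breaks the argument where $f$ vanishes. You choose $N$ with $\Vert P_N f(x_0)\Vert > \tfrac23\Vert f(x_0)\Vert$; this strict inequality is \emph{false} at any $x_0$ with $f(x_0)=0$ (both sides are $0$), so the local neighborhoods do not cover $X$ and the finite-subcover step does not get off the ground. More fundamentally, you calibrate the truncation level against $\Vert f(x)\Vert$, which has no positive lower bound, whereas the quantity that controls the size of perturbation you are permitted to make is $\epsilon(x)$, which does. The paper's version sidesteps both issues at once: define $U_k=\{x\in X:\ \vert\langle f(x),e_i\rangle\vert<\epsilon(x)/2\ \text{for all}\ i\geq k\}$; these are open, increasing, and cover $X$ because each $f(x)\in H$ has decaying coordinates. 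Compactness gives a single $k$ with $U_k=X$, and then $g(x)=f(x)+\tfrac12\epsilon(x)e_k$ works outright --- no Urysohn function, no case split on where $\Vert f\Vert$ is small, and the cross-term worry you raise never arises because the $k$-th coordinate of $g(x)$ alone is nonzero.

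For the finite-dimensional normal case your plan is over-engineered and has a real gap: you propose patching local truncations via a ``partition-of-unity-style'' argument, but a general normal space need not be paracompact, so partitions of unity subordinate to arbitrary covers are not available, and nothing guarantees a countable cover either. No truncation of $f$ is actually needed. Fix $m=\lceil(d+1)/2\rceil$ and write $f(x)=(P_m f(x),\ (1-P_m)f(x))$, where $P_m$ projects onto $\mathrm{span}(e_1,\ldots,e_m)\cong\R^{2m}$ with $2m\geq d+1\geq\dim X+1$. Apply Lemma \ref{avoidzero} to the map $P_m f\in\continuous(X,\R^{2m})$ to get $g_0$ with $\Vert g_0(x)-P_m f(x)\Vert<\epsilon(x)$ and $g_0(x)\neq0$, and set $g(x)=g_0(x)+(1-P_m)f(x)$. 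Then $\Vert g(x)-f(x)\Vert=\Vert g_0(x)-P_m f(x)\Vert<\epsilon(x)$ and $g(x)\neq0$ because its $P_m$-component $g_0(x)$ is already nonzero. The infinite tail of $f$ is simply left alone; there is nothing to patch.
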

\begin{proof}
The case when $X$ is normal and finite-dimensional reduces to Lemma \ref{avoidzero}. Let us handle the case when $X$ is compact. For $k\in\posint$ let $U_k$ be the set of all $x\in X$ such that
\begin{equation}
\label{inequk}
\vert\scalprod{f(x),e_i}\vert<\epsilon(x)/2\;\;\;\text{for all}\;i\geq k.
\end{equation}
The sequence $(U_k)_{k\in\posint}$ is an open covering of $X$, and is increasing in the sense of inclusion. Since $X$ is compact, it follows that there exists some $k\in\posint$ such that $U_k=X$, so that inequality (\ref{inequk}) holds for all $x\in X$. We may therefore obtain the wanted map $g$ just by letting, for all $x\in X$,
$$g(x)=f(x)+\epsilon(x)e_k.\qedhere$$
\end{proof}

The main technical step toward Theorem \ref{hessenbergoperators} is the following lemma:

\begin{lemma}
\label{finitehessenberg}
Let $X$ be either a compact space or a finite-dimensional normal space. Let $f$ be a strongly continuous map from $X$ to $B(H)$. For any $\epsilon\in\continuous(X,\R_{>0})$, there exist sequences $(g^k)_{k\in \posint}$ and $(u^k)_{k\in \posint}$ of maps from $X$ to $B(H)$ such that, letting $h^k=u^kg^ku^{k*}$, the following properties are satisfied:
\begin{itemize}
\item for all $k\in \posint$, $g^k$ and $h^k$ are strongly continuous maps from $X$ to $B(H)$
\item for all $k\in \posint$, $u^k$ is a norm-continuous map from $X$ to $U(H)$.
\item for all $k\in \posint$, $g^{k+1}-g^k$ is norm-continuous, self-adjoint, compact, and $\Vert g^{k+1}(x)-g^k(x)\Vert<\epsilon(x)/2^k$.
\item for all $k,l\in \posint$, if $k\leq l$ then the $k$ first rows of $u^l$ and $u^k$ agree. In other words, $\forall i,j,k,l\in \posint$, if $i\leq k\leq l$ then $u^l_{i,j}=u^k_{i,j}$.
\item for all $k,l\in \posint$, if $1<k\leq l$ then the $k-1$ first columns of $h^k$ and $h^l$ agree, and are in Hessenberg form. In other words:
\begin{itemize}
\item $\forall i,j,k,l\in \posint$, if $j< k\leq l$ then $h^l_{i,j}=h^k_{i,j}$.
\item $\forall i,j,k\in \posint$, if $j< k$ and $i\geq j+2$ then $h^k_{i,j}= 0$.
\item $\forall i,j,k\in \posint$, if $j\leq k$, $j<n$ and $i=j+1$, then $h^k_{i,j}\neq 0$.
\end{itemize}
\end{itemize}
\end{lemma}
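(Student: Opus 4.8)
The plan is to build the sequences $(g^k)$, $(u^k)$ by induction on $k$, mimicking the Householder-reflection argument of Proposition \ref{hessenbergdefault} but now carried out one column at a time in the infinite-dimensional setting. Start with $g^1 = f$ and $u^1 = \id$ (so $h^1 = f$; note that the conditions on the ``first $0$ columns'' are vacuous). Suppose $g^k$, $u^k$ have been constructed. The matrix field $h^k = u^k g^k (u^k)^*$ is Hessenberg in its first $k-1$ columns, and I must pass to $h^{k+1}$, Hessenberg in its first $k$ columns, by modifying only the tail. Let $b^k(x)$ be the part of the $k$-th column of $h^k(x)$ lying strictly below the subdiagonal, i.e. the vector with entries $h^k(x)_{i,k}$ for $i \geq k+1$; this is a norm-continuous map from $X$ to $H$ (the $k$-th column is $h^k(x) e_k$, which depends strongly continuously on $x$, but only finitely many --- in fact we only need the relevant coordinates --- wait, the whole column can have an infinite tail, so it is genuinely an $H$-valued map; it is continuous because $x \mapsto h^k(x)e_k$ is, and projecting onto $\overline{\mathrm{span}}(e_{k+1},e_{k+2},\dots)$ is norm-continuous). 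First apply Lemma \ref{avoidinfdim} to perturb $b^k$ to a map $\tilde b^k$ with $\tilde b^k(x) \neq 0$ everywhere and $\Vert \tilde b^k(x) - b^k(x)\Vert < \epsilon(x)/2^{k+1}$; realize this perturbation on $h^k$ as a \emph{self-adjoint, finite-at-each-point-but-$H$-valued, compact} correction $\delta^k(x)$ supported in row/column $k$ (exactly as in Proposition \ref{hessenbergdefault}: put the difference in column $k$ below the diagonal and its conjugate in row $k$), and set $\hat h^k = h^k + \delta^k$. Pulling $\delta^k$ back through $u^k$ gives the corresponding self-adjoint compact correction to $g^k$; since $\delta^k$ has finite-rank-like structure (supported in one row and one column) it is compact, and $\Vert \delta^k(x)\Vert < \epsilon(x)/2^k$ after also accounting for the second perturbation below.

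Next, to get the subdiagonal entry positive, I would argue as in Proposition \ref{hessenbergdefault}: the perturbed tail $\tilde b^k(x)$, viewed in the (now infinite-dimensional) space $\overline{\mathrm{span}}(e_{k+1},e_{k+2},\dots)$, must avoid the closed half-line $\R_{\leq 0}\cdot e_{k+1}$. In finite dimensions Proposition \ref{hessenbergdefault} used Lemma \ref{avoidinmanifold} on a sphere to dodge the single bad point $-e_{k+1}/\Vert\cdot\Vert$; in infinite dimensions the unit sphere of a Hilbert space is contractible, so we can avoid the bad point much more cheaply --- in fact any $X$ works, with no dimension hypothesis, e.g. by a second application of Lemma \ref{avoidinfdim} in the hyperplane $e_{k+1}^\perp$ to push $\tilde b^k$ off the half-line (or just by a rotation-type argument). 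Absorb this second perturbation into $\delta^k$ so the total correction still has norm $< \epsilon(x)/2^k$. Now define the Householder reflection $u_0^k(x)$ on $\overline{\mathrm{span}}(e_{k+1},e_{k+2},\dots)$ sending the perturbed tail to a positive multiple of $e_{k+1}$; it is norm-continuous in $x$ because the Householder vector $h(x) = \tilde b^k(x)/\Vert \tilde b^k(x)\Vert + e_{k+1}$ is norm-continuous and nonzero, and $x \mapsto$ (reflection about $\C\cdot h(x)$) is norm-continuous. Set $u^{k+1}(x) = (\id_{\,\overline{\mathrm{span}}(e_1,\dots,e_k)} \oplus u_0^k(x))\, u^k(x)$, and $g^{k+1} = g^k + (\text{pullback of }\delta^k)$, so that $h^{k+1} = u^{k+1} g^{k+1} (u^{k+1})^*$. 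Since $u_0^k(x)$ fixes $e_1,\dots,e_k$ and acts only on coordinates $\geq k+1$, the first $k-1$ columns of $h^{k+1}$ equal those of $\hat h^k$, hence of $h^k$; and column $k$ of $h^{k+1}$ is now Hessenberg with a positive subdiagonal entry. Likewise the first $k$ rows of $u^{k+1}$ equal those of $u^k$ because $u_0^k(x)\oplus\id$ fixes $e_1,\dots,e_k$. This gives all the stated compatibility conditions.

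The one point that needs genuine care --- and I expect it to be \textbf{the main obstacle} --- is verifying that $g^{k+1}-g^k$ (equivalently the pulled-back $\delta^k$) is \emph{norm-continuous} and \emph{compact} as a map into $B(H)$, even though $g^k$ itself is only strongly continuous. The correction $\delta^k(x)$ is supported in the single row $k$ and single column $k$ of $h^k(x)$, so it has the form $e_k \otimes \overline{w(x)} + \overline{\overline{w(x)}} \otimes e_k$ for a vector $w(x) \in H$ (a rank-$\leq 2$ operator), hence compact; and conjugating by the fixed-at-finite-level-plus-strongly-continuous unitary $u^k(x)$ preserves both rank $\leq 2$ and, one checks, norm-continuity, since $u^k(x)$ restricted to the relevant finite-dimensional initial block is norm-continuous and the correction "lives" essentially in the finite block $e_1,\dots,e_k$ on one side. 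One must also check that $h^{k+1}$, $g^{k+1}$ stay strongly continuous: $g^{k+1} = g^k + (g^{k+1}-g^k)$ is a sum of a strongly continuous and a norm-continuous map, hence strongly continuous; and $h^{k+1} = u^{k+1}g^{k+1}(u^{k+1})^*$ with $u^{k+1}$ norm-continuous (so in particular strongly continuous) and $g^{k+1}$ strongly continuous is again strongly continuous. Finally the Householder vector is bounded away from $0$ on each $x$ but perhaps not uniformly; that is fine, since we only need pointwise nonvanishing for continuity of the reflection, not a uniform bound. Assembling these observations completes the induction.
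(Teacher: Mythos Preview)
Your approach is essentially the paper's: build $(g^k,u^k)$ by induction, at step $k$ perturb the tail of the $k$-th column of $h^k$ so that the Householder vector is well-defined and continuous, then conjugate by the resulting reflection. The only notable difference is efficiency in the perturbation step: you first perturb $b^k$ to be nonzero and then do a \emph{second} perturbation ``in the hyperplane $e_{k+1}^\perp$'' to dodge the bad half-line, whereas the paper does both at once by perturbing $b'$ (the tail of $b$ with its first entry removed) to be nonzero via Lemma~\ref{avoidinfdim}---this single perturbation already forces $b\neq 0$ \emph{and} $b/\Vert b\Vert\neq -e_1$, so the Householder vector $b/\Vert b\Vert + e_1$ never vanishes. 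Your second perturbation is in fact exactly this trick, so the argument is the same, just phrased as two steps instead of one (and your first step is then redundant). One small remark: your discussion of why the pulled-back correction $u^{k*}\delta^k u^k$ is norm-continuous is more elaborate than needed---since $u^k$ is norm-continuous by induction and $\delta^k$ is a norm-continuous rank-$\leq 2$ field, conjugation preserves both properties immediately.
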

\begin{proof}
We construct the sequences $(g^k)_{k\in \posint}$ and $(u^k)_{k\in \posint}$ by induction. We set $g^1=f$, and we let $u^1$ be the constant map, $u^1(x)=\id$ for all $x\in X$. Now suppose that for some $k\in \posint$, the sequences $u^1,\ldots,u^k$ and $g^1,\ldots,g^k$ have been constructed, and let us then construct $g^{k+1}$ and $u^{k+1}$.\\

Since the $k-1$ first columns of $h^k$ are in Hessenberg form, that means that $h^k$ has the following form:
\[
\xymatrix@!@=2pt{
\ast\ar@{-}[rr]\ar@{-}[rrdd] & & \ast\ar@{-}[dd] & \ast\ar@{-}[rrrr]\ar@{-}[ddd] & & & & \\
+\ar@{-}[rrdd] & & & & & & & \\
0\ar@{-}[rrdd]\ar@{-}[ddddd] & & \ast & & & & & \\
& & + & \ast\ar@{-}[rrrrdddd] & & & & \\
& & 0\ar@{-}[ddd] & \ast\ar@{-}[ddd] & & & & \\
& & & & \ast\ar@{-}[rrdd]\ar@{-}[dd]& & & \\
& & & & & & & \\
& & & & & & &
\save
  "5,4"."8,4"*[F]\frm{}
\restore
}
\]
In this diagram we represented the $k-1$ first columns ending with zeros. Of course this only is an accurate representation if $k>1$. In the $k=1$ case, one should simply imagine that the framed column is the first column.\\

Let us do the same kind of ``Householder reflection'' argument that we already made in the proof of Theorem \ref{hessenbergdefault}. In the above diagram, we framed the infinite block in the $k$-th column starting at row $k+1$. Call it $b$:
$$b(x)=\sum_{i=1}^\infty h_{k+i}^k(x)e_i.$$
Thus $b$ is a map from $X$ to $H$. Moreover, since $h^k$ is strongly continuous, $b$ is continuous. Now let $b'$ be the map obtained by taking off the first coefficient in $b$. In other words:
\begin{equation}\label{bprime}b'(x)=\sum_{i=1}^\infty h_{k+i+1}^k(x)e_i.\end{equation}
By Lemma \ref{avoidinfdim}, a small perturbation ensures that $b'(x)\neq0$ for all $x\in X$. Moreover, this can be achieved by a self-adjoint perturbation on $h^k$ by applying the adjoint perturbation on the other side of the diagonal. So from now on we assume, without loss of generality, that $h^k$ is such that the block $b'$ inside it, as defined in equation (\ref{bprime}), satisfies $b'(x)\neq0$ for all $x\in X$. This ensures that $b(x)\neq 0$ and that
$$c(x):=\frac{b(x)}{\Vert b(x)\Vert}+e_1\neq 0\;\;\;\text{for all}\;x\in X.$$
The latter equation defines a continuous map $c:X\rightarrow H-\{0\}$. Let $v_0(x)\in \unitaries(H)$ denote the orthogonal reflection around the line generated by $c(x)$. Notice that for all $x\in X$, $v_0(x)b(x)$ belongs to the span of $e_1$. Moreover, since $c$ is continuous, $v_0$ is norm-continuous. Now define another unitary field $v$ by letting, for all $x\in X$,
$$v(x)=\left(\begin{matrix} 1_{k-1} & 0 \\ 0 & v_0(x) \end{matrix}\right).$$
It follows that $u$ is still norm-continuous and that $vh^kv^*$ satisfies all the wanted properties for $h^{k+1}$. Let us define $u^{k+1}:=vu^k$, and $g^{k+1}=g^k$. That definition of $g^{k+1}$ may look surprising but the truth is that we allowed ourselves above to modify $h^k$ ``in place'', so of course it should be understood that $g^k$ was then modified accordingly to preserve the equality $h^k=u^kg^ku^{k*}$. At the next iteration, we have
$$h^{k+1}=u^{k+1}g^{k+1}u^{k+1*}=vu^kg^ku^{k*}v^*=vh^kv^*$$
and we checked above that $vh^kv^*$ has all the wanted properties for $h^{k+1}$.
\end{proof}

\begin{lemma}
\label{strongcont}
Let $X$ be a topological space. Let $(f^k)_{k\in \posint}$ be a sequence of strongly continuous maps from $X$ to $B(H)$. Suppose that there exists $M\in\continuous(X,\R_{>0})$ such that
$$\Vert f^k(x)\Vert \leq M(x)\;\;\;\text{for all $x\in X$ and all $k\in\posint$}.$$
Suppose that for all $k\in \posint$ there exists $l_0\in \posint$ such that for all $l\geq l_0$ the first $k$ columns of $f^l$ and of $f^{l_0}$ agree, in other words:
$$f^l_{ij}(x) = f^{l_0}_{ij}(x)\;\;\;\text{for all}\;x\in X, 1\leq j\leq k, 1\leq i.$$
It follows that for all $x\in X$, the sequence $(f^k(x))_{k\in \posint}$ converges strongly, and that, letting $f(x)$ denote its strong limit, the map $f$ thus defined on $X$ is strongly continuous.
\end{lemma}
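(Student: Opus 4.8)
The plan is to fix a vector $\xi\in H$ and show that the sequence of continuous maps $x\mapsto f^k(x)\xi$ converges uniformly on a neighborhood of any given point (or uniformly enough to conclude continuity of the limit). First I would record the pointwise convergence: given $x\in X$, the hypothesis says that for each $k$ the first $k$ columns of $f^l(x)$ eventually stabilize as $l\to\infty$, so for each basis vector $e_j$ the sequence $f^l(x)e_j$ is eventually constant; combined with the uniform bound $\Vert f^k(x)\Vert\leq M(x)$, a standard $\varepsilon/3$ argument (split $\xi$ into its projection onto $\mathrm{span}(e_1,\ldots,e_N)$ plus a small tail, use the uniform bound to control the tail contribution of $f^k(x)-f^l(x)$, and use eventual constancy on the finite part) shows $(f^k(x)\xi)_k$ is Cauchy, hence convergent. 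Call the limit $f(x)\xi$; linearity and the bound $\Vert f(x)\Vert\leq M(x)$ are immediate, so $f(x)\in B(H)$.

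Next I would prove strong continuity of $f$. Fix $\xi\in H$ and $x_0\in X$ and $\eta>0$. Choose $N$ so that the tail $\Vert(1-P_N)\xi\Vert$ is small, where $P_N$ is the projection onto $\mathrm{span}(e_1,\ldots,e_N)$; by the stabilization hypothesis applied with $k=N$, pick $l_0$ so that the first $N$ columns of $f^l$ agree with those of $f^{l_0}$ for all $l\geq l_0$, which forces $f^l(x)P_N\xi=f^{l_0}(x)P_N\xi$ for all such $l$ and all $x$, and hence in the limit $f(x)P_N\xi=f^{l_0}(x)P_N\xi$ for all $x\in X$. Now estimate
\[
\Vert f(x)\xi-f(x_0)\xi\Vert\leq \Vert f^{l_0}(x)P_N\xi-f^{l_0}(x_0)P_N\xi\Vert+\big(M(x)+M(x_0)\big)\Vert(1-P_N)\xi\Vert.
\]
The first term is continuous in $x$ since $f^{l_0}$ is strongly continuous, so it is small on a neighborhood of $x_0$; the second term is small because $M$ is continuous (so bounded near $x_0$) and the tail was chosen small. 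This gives $\Vert f(x)\xi-f(x_0)\xi\Vert<\eta$ on a neighborhood of $x_0$, which is exactly strong continuity of $f$ at $x_0$.

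The only mildly delicate point is making sure the stabilization of the first $k$ columns really translates into the equality $f(x)P_N\xi=f^{l_0}(x)P_N\xi$ holding simultaneously for all $x$ with a single $l_0$ — but this is exactly what the hypothesis provides (the $l_0$ in the statement does not depend on $x$), so no uniformity in $x$ has to be manufactured. Everything else is the routine $\varepsilon/3$ bookkeeping of approximating vectors by finite truncations and using the global bound $M$; I would write it out once carefully and not belabor it.
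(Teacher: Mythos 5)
Your proof is correct and follows essentially the same route as the paper's: pointwise strong convergence via a Cauchy argument splitting $\xi$ into a finite truncation plus a tail, and then strong continuity via the same $\varepsilon/3$-type decomposition, pulling the finite part back to a single $f^{l_0}$ whose strong continuity is then invoked. You are actually slightly more careful than the paper about using the stabilization index $l_0$ from the hypothesis (which may exceed $N$) rather than tacitly taking $l_0=N$, but this is a cosmetic difference; the key estimate $\Vert f(x)\xi-f(x_0)\xi\Vert\leq\Vert f^{l_0}(x)P_N\xi-f^{l_0}(x_0)P_N\xi\Vert+(M(x)+M(x_0))\Vert(1-P_N)\xi\Vert$ is exactly the paper's, modulo bookkeeping.
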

\begin{proof}
For $k\in \posint$, let $p_k:H\rightarrow H$ denote the orthogonal projection onto the linear span of $e_1,\ldots,e_k$. Let $x\in X$. Let us check that $(f^k(x))_{k\in \posint}$ converges strongly. Let $\xi\in H$ and $\delta>0$. There exists $k\in \posint$ such that, letting $\xi^\prime=p_k(\xi)$, we have $\Vert \xi^\prime-\xi\Vert<\delta/M(x)$. For all $l\geq k$, we have
$f^l(x)\xi^\prime=f^k(x)\xi^\prime$,
whence $\Vert f^l(x)\xi - f^k(x)\xi^\prime \Vert < \delta$
showing that the sequence $(f^l(x)\xi)$ is Cauchy. This shows that $(f^l(x))_{l\in \posint}$ converges strongly. The strong limit $f(x)$ still satisfies $\Vert f(x)\Vert\leq M(x)$.\\

Let us now prove that $f$ is strongly continuous. Let $x_0\in X$. Let $\xi\in H$. Let $\delta>0$. There exists $k\in \posint$ such that, letting $\xi^\prime=p_k(\xi)$, we have
$$\Vert \xi^\prime-\xi\Vert<\frac{\delta}{8M(x_0)}.$$
By construction, for all $l\geq k$, we have $f^{l}(x)\xi^\prime=f^{k}(x)\xi^\prime$ for all $x\in X$. It follows that $f(x)\xi^\prime=f^{k}(x)\xi^\prime$.
Since $f^k$ is strongly continuous, this shows that the map $\alpha:x\mapsto f(x)\xi^\prime$ is continuous on $X$. Let $U$ be the set of all $x\in X$ such that $\Vert \alpha(x)-\alpha(x_0)\Vert <\delta/2$ and $M(x)<2M(x_0)$. Thus $U$ is an open neighborhood of $x_0$ of $X$. For all $x\in U$, we have
$$\Vert f(x)\xi - f(x_0)\xi \vert
\leq
 \Vert f(x)\xi - f(x)\xi^\prime \Vert
+\Vert \alpha(x) - \alpha(x_0) \Vert
+\Vert f(x_0)\xi^\prime - f(x_0)\xi \Vert
.$$
But for all $x\in U$, $$\Vert f(x)\xi - f(x)\xi^\prime \Vert \leq M(x)\Vert \xi^\prime-\xi\Vert \leq \delta/4.$$
It follows that for all $x\in U$,
$$\Vert f(x)\xi - f(x_0)\xi \Vert \leq \delta/4 + \delta/2 + \delta/4 \leq \delta.$$
This proves that $f$ is strongly continuous.
\end{proof}

Let us now finish the proof of our main theorem.

\begin{proof}[Proof of Theorem \ref{hessenbergoperators}]
Apply Lemma \ref{finitehessenberg} to obtain sequences $(g^k)_{k\in \posint}$ and $(u^k)_{k\in \posint}$ and let $h^k=u^kg^ku^{k*}$. By Lemma \ref{strongcont}, for all $x\in X$, the sequences $(u^{k*}(x))$ and $(h^k(x))$ converge strongly, and, letting $v(x)$ and $h(x)$ denote their respective strong limits, the maps $v,g,h$ thus defined on $X$ are strongly continuous (we will prove below that $h=v^*gv$).\\

Notice that for all $x\in X$, since $v(x)$ is an isometry because it is a strong limit of unitaries.\\

Since $\Vert g^{k+1}(x)-g^k(x)\Vert <\epsilon(x)/2^k$ and $g^{k+1}-g^k$ is self-adjoint, compact and norm-continuous for all $k$ and $g^0=f$, it follows that $(g^k-f)_{k\in \posint}$ converges locally uniformly to $g-f$ and that $g-f$ is self-adjoint, compact, norm-continuous, and $\Vert g(x)-f(x)\Vert < \epsilon(x)$ for all $x\in X$. This shows in particular that $g$ is strongly continuous.\\

Let us show that $h=v^*gv$. It is enough to show that $(u^kg^ku^{k*})(x)$ converges weakly to $(v^*gv)(x)$ for all $x\in X$. Let $\xi,\eta\in H$. We have
$$\scalprod{(u^kg^ku^{k*})(x)\xi,\eta}=\scalprod{(g^ku^{k*})(x)\xi,(u^{k*})(x)\eta}.$$
As the sequences $(g^{k})$ and $(u^{k*})$ converge strongly to $g$ and $v$ respectively, and the strong topology makes multiplication jointly continuous on bounded sets, the sequence $(g^ku^{k*})$ also converges strongly to $gv$, and we obtain
$$\scalprod{(u^kg^ku^{k*})(x)\xi,\eta}\rightarrow\scalprod{(gv)(x)\xi,v(x)\eta}=\scalprod{(v^*gv)(x)\xi,\eta}.$$
This proves that $h = v^*gv$.
\end{proof}

\def\cprime{$'$}
\frenchspacing

\end{document}